\newcommand{\ci}[1]{\mathscr{#1}}%corsivo inglese
\newcommand{\g}[1]{\mathfrak{#1}}
\renewcommand{\ni}{\nu}
\newcommand{\alfa}{\alpha}
\newcommand{\R}{\mathbf{R}}
\newcommand{\C}{\mathbf{C}}
\newcommand{\de}{\partial}
\renewcommand{\H}{\mathbf{H}}
\newcommand{\N}[1]{\left\lVert#1\right\rVert}%norma
\newcommand{\e}{\varepsilon}
\renewcommand{\phi}{\varphi}
\newcommand{\con}[1]{\overline{#1}}
\newcommand{\mi}{\mu}
\renewcommand{\div}{\operatorname{div}}
\newcommand{\cerchio}[1]{\accentset{\smash{\raisebox{-0.12ex}{$\scriptstyle\circ$}}}{#1}\rule{0pt}{2.3ex}}
\newtheorem{proposizione}{Proposition}[section]
\newtheorem{teorema}[proposizione]{Theorem}
\newtheorem{lemma}[proposizione]{Lemma}
\newtheorem{corollario}[proposizione]{Corollary}
\theoremstyle{definition}
\newtheorem{definizione}[proposizione]{Definition}
\theoremstyle{remark}
\newtheorem{osservazione}[proposizione]{Remark}
\title{A compactness result for the CR Yamabe problem in three dimensions% under positive mass assumptions
}
\author{Claudio Afeltra}
\date{}
\begin{document}

\maketitle

\begin{abstract}
 We prove the compactness of the set of solutions to the CR Yamabe problem on a compact strictly pseudoconvex CR manifold of dimension three whose blow-up manifolds at every point have positive p-mass.
 As a corollary we deduce that compactness holds for CR-embeddable manifolds which are not CR-equivalent to $S^3$.
 The theorem is proved by blow-up analysis.
\end{abstract}

\section{Introduction}
After the Yamabe problem, that is the problem of finding a conformal metric with constant scalar curvature on a compact Riemannian manifold, was solved, the question of describing the set of solutions thereof naturally arose.

In negative or zero Yamabe class it is quite simple to prove that the solution is unique up to normalization, so the problem is non trivial only in the case of positive Yamabe class.
In a topics course at Stanford in 1988 (whose notes by Pollack are quite famous and easily found on the internet), Schoen conjectured that, fixing the value of the curvature, the set of solutions of the Yamabe problem
%\verde{is}
is compact except when the manifold is conformally equivalent to $S^n$, giving indications on how to prove it.

The conjecture was proved in dimension $n\le 24$ by Khuri, Marques and Schoen (see \cite{KMS} for the proof and a more detailed history of the problem), while it was proved to be false in dimension $n\ge 25$ (see \cite{B} and \cite{BM}). The theorem has also been proved in general dimension for conformally flat manifolds (see \cite{S}), or under the hypothesis that the Weyl tensor $W$ satisfies $|W|+|\nabla W|>0$ (see \cite{LZha}).

\vspace{3mm}

It is natural to study the same question on CR manifolds.
A CR manifold is a $2n+1$-dimensional manifold (with $2n+1\ge 3$) endowed with
an $n$-dimensional subbundle $\ci{H}\subset TM\otimes\C$ such that $[\Gamma(\ci{H}),\Gamma(\ci{H})]\subset\Gamma(\ci{H})$ and $\ci{H}\cap\con{\ci{H}}=\{0\}$.
Such kind of manifolds arise in the study of hypersurfaces of $\C^{n+1}$.
%: indeed if $M\subset\C^{n+1}$ is a hypersurface then it carries the canonical CR structure $(TM\otimes\C)\cap T^{1,0}\C^n$, where $T^{1,0}\C^n= \operatorname{span}\left\{\frac{\de}{\de z_1},\ldots,\frac{\de}{\de z_{n+1}}\right\}$.
%a certain structure (see Section \ref{Preliminari} for the precise definition) which models abstractly real hypersurfaces of complex manifolds.
Under a generic hypothesis of nondegeneracy, CR manifolds have a canonical contact structure; a CR manifold with a contact form on this contact structure is called pseudohermitian manifold.

The choice of a contact form induces a rich geometric structure, including a pseudoriemannian metric (which is Riemannian for ``strictly pseudoconvex'' manifolds) and a connection called Tanaka-Webster connection. As in Riemannian geometry, the Tanaka-Webster connection allows to define a curvature tensor, and contracting it twice using the metric one gets a scalar quantity known as Webster curvature.

Since the choice of a contact form is determined up to the multiplication by a smooth function, problems from conformal geometry naturally extend to CR geometry, and trying to apply techniques therefrom to this setting showed that strong analogies hold.

In particular the problem of finding a contact form for compact pseudoconvex CR manifolds, known as CR Yamabe problem, has a variational formulation very similar to the one for the Yamabe problem: a contact form for the CR manifold $M$ has constant Webster curvature if and only if it is stationary for the CR Yamabe functional
$$\g{Q}(\theta) = \frac{\int_M R \theta\wedge{(d\theta)}^n}{\left(\int_M\theta\wedge{(d\theta)}^n\right)^{\frac{n}{n+1}}}$$
where $R$ is the Webster curvature.
The infimum of $\g{Q}$, called CR Yamabe class, is indicated by $\ci{Y}(M)$
\footnote{$\g{Q}(\theta)$ and $\ci{Y}(M)$ depend not only by $\theta$ and $M$ but also by the CR structure; by abuse of notation sometimes we omit the latter}
, and it can be shown with the same proof of the Riemannian case that if $\ci{Y}(M)<\ci{Y}(S^{2n+1})$ then $\g{Q}$ has minimizers.

Fixing a reference contact form $\theta$ and calling $\widetilde{\theta}=u^{\frac{2}{n}}\theta$,
$$\g{Q}(\widetilde{\theta}) = \frac{\int_M \left( b_n|\nabla^Hu|^2 +Ru^2 \theta\wedge{(d\theta)}^n\right)}{\left(\int_Mu^{2+\frac{2}{n}}\theta\wedge{(d\theta)}^n\right)^{\frac{n}{n+1}}} = \frac{\int_M uL_{\theta}u}{\left(\int_Mu^{2+\frac{2}{n}}\theta\wedge{(d\theta)}^n\right)^{\frac{n}{n+1}}}$$
where $\nabla^H$ is the subriemannian gradient, $b_n=2+\frac{2}{n}$ and $L_{\theta_0}=-b_n\Delta_b+R$, $\Delta_b$ being the sublaplacian. In particular the problem is equivalent to solving the equation $L_{\theta}u=u^{1+\frac{2}{n}}$.

Jerison and Lee proved that $\g{Q}$ has minimizers, thus solving the CR Yamabe problem, for non spherical (that is, not locally equivalent to $S^{2n+1}$) compact manifolds of dimension $2n+1\ge 5$ using the strategy that had been used for non conformally flat Riemannian manifolds of dimension $n\ge 6$ (see \cite{JL1}, \cite{JL3}).
The remaining cases of the problem were solved in \cite{G} and \cite{GY} using the method of critical points at infinity; the solutions found in this way are not necessarily minimizers of $\g{Q}$.
%$\ci{Y}(M)$ has been proved to be attained for spherical manifolds of dimension $2n+1\ge 7$ in \cite{CCY}, and for spin manifolds of dimension $2n+1=5$ in \cite{CC}.
%In three dimensions it was shown that $\ci{Y}(M)$ in general is not attained (see \cite{CMY2}).

\vspace{3mm}

In the study of the problem of compactness for solutions to the Yamabe problem, two fundamental elements which do not have an obvious analogue in CR geometry are used: the positive mass theorem and the classification of metrics of constant positive scalar curvature on $\R^n$ conformal to the Euclidean metric.

The theory of mass for asymptotically flat pseudohermitian manifolds has yet to reach the extent of the analogous Riemannian theory; the techniques used in the study of the latter, in particular the theory of minimal hypersurfaces, do not yet have been developed to the required generality.

A positive mass theorem for spherical CR manifolds of dimension $2n+1\ge 7$ (and for $2n+1=5$ under a technical condition) was proved in \cite{CCY}.
A definition of pseudohermitian mass, or p-mass, valid for general three dimensional CR manifolds (but straightforwardly generalizable to higher dimension) has been given in \cite{CMY1}, where a CR positive mass theorem under certain hypotheses was proved.
In \cite{CC} a CR positive mass theorem was proved for spin five dimensional manifolds was proved.
A remarkable counterexample by Cheng, Malchiodi and Yang (see \cite{CMY2}) shows that, unlike in Riemannian geometry, the p-mass can be negative for three dimensional CR manifolds.
In analogy with other phenomena in CR geometry, many believe that such a counterexample can exist only in three dimension, and conjecture that the CR positive mass theorem be true in higher dimensions.

Like in Riemannian geometry, these theorems have applications to the study of the CR Yamabe functional $\g{Q}$: the aforementioned CR positive mass theorems imply as corollaries that under the respective hypotheses $\g{Q}$ has minimum, while the authors in \cite{CMY2} proved that in their counterexample thereto $\ci{Y}(M)$ is not attained.

Regarding metrics of constant positive scalar curvature on $\R^n$ conformal to the Euclidean metric, it was proved that they are the one obtained by pulling back the standard metric of $S^n$ through the stereographic projection, and translations and dilations thereof (Corollary 8.2 in \cite{CGS}). This result is used in a fundamental way when rescaling solutions tending to infinity in order to describe how they blow up.
In order to carry out the same strategy in CR geometry, an analogous theorem is needed for the Heisenberg group $\H^n$, which is the space obtained blowing up a pseudohermitian manifold at a point.
As in the Euclidean case, a family of solutions can be obtained by pulling back the standard contact form of $S^{2n+1}$ through the Cayley transform, which is a map $\H^n\to S^{2n+1}$ analogous to the stereographic projection, and it is natural to conjecture that they
%\verde{are}
are all the solutions. The problem is equivalent to the classification of the positive solutions of the equation $-\Delta_b u=u^{1+\frac{2}{n}}$.
%, where $-\Delta_b$ is the sublaplacian.
The proof by Caffarelli, Gidas and Spruck uses the method of moving planes, but in $\H^n$ the symmetry group is not big enough to apply this method.

In \cite{JL2} Jerison and Lee classified the solutions of the CR Yamabe problem on $S^{2n+1}$, and through a removable singularity theorem, they proved the classification theorem under the hypothesis that $u\in L^{2+\frac{2}{n}}$, which is equivalent to having finite volume, but this result is not strong enough to carry out blow-up analysis, which requires the theorem at least for bounded solutions.

Very recently, Catino, Li, Monticelli and Roncoroni proved the classification theorem for $\H^1$ (see \cite{CLMR}).
This allows us to perform the blow-up analysis and to prove a compactness theorem for the solutions of the CR Yamabe problem.

Given a compact strictly pseudoconvex three-dimensional pseudohermitian manifold with $\ci{Y}(M,\ci{H})>0$ and $x\in M$ we will denote by $m_x$ the p-mass, as defined in Definition 2.3 in \cite{CMY1}, of the manifold $(M\setminus\{x\},\ci{H})$ with the blow-up contact form as defined in Subsection 2.1 in the same work.

Then we will prove the following theorem. As customary, due to the importance of subcritical approximation, we will prove a result slightly more general than the compactness of the solutions of the CR Yamabe problem, and which comprises subcritical exponents as well.

\begin{teorema}\label{IlTeorema}
 Let $(M,\ci{H},\theta)$ be a three-dimensional strictly pseudoconvex pseudohermitian manifold of positive CR Yamabe class
% with a contact form $\theta$
 such that, for every $x\in M$,
% the pseudohermitian mass of $M\setminus\{x\}$ with its blow-up structure as defined in \cite{CMY1} is positive.
 $m_x>0$.
 Then for every $\e>0$ and $k\in\mathbf{N}$ there exists a constant $C$ such that
 $$\begin{cases}
    \frac{1}{C}\le u\le C\\
    \N{u}_{\Gamma^{k,\alfa}}\le C
   \end{cases}$$
 for every $u\in\cup_{1+\e\le p\le 3}\ci{M}_p$ and $0<\alfa<1$, where
 $$\ci{M}_p = \left\{ u>0 \;\middle|\; L_{\theta}u = u^p \right\},$$
 $L_{\theta}=-4\Delta_b+R$ being the conformal sublaplacian, and $\Gamma^{2,\alfa}$ is the Hölder space defined in Subsection \ref{CapitoloSpazi}.
 
 In particular, $\cup_{1+\e\le p\le 3}\ci{M}_p$ is compact in the $\Gamma^{k,\alfa}$ topology.
\end{teorema}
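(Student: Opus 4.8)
The plan is to argue by contradiction, following the blow-up scheme of Schoen adapted to the CR setting. Suppose the conclusion fails. Then there are exponents $p_i\in[1+\e,3]$ and solutions $u_i\in\ci{M}_{p_i}$ with $\N{u_i}_{L^\infty}\to\infty$; since for $p$ bounded away from $3$ the Folland--Stein embedding $S^{1,2}\hookrightarrow L^{p+1}$ is compact and yields uniform a priori bounds, necessarily $p_i\to3$. Call $\bar x\in M$ a blow-up point if there are local maxima $x_i\to\bar x$ with $M_i:=u_i(x_i)\to\infty$. Rescaling around $x_i$ by $M_i$ in CR normal coordinates, the equation $-4\Delta_b u_i+Ru_i=u_i^{p_i}$ becomes, in the limit, the critical equation $-4\Delta_b U=U^3$ on the Heisenberg group $\H^1$ (the curvature term scales away and $p_i\to3$). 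The classification theorem of \cite{CLMR} then identifies $U$ as a standard Jerison--Lee bubble, which has finite energy.

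First I would establish the structure of the blow-up set. By the finiteness of the bubble energy together with an $\e$-regularity argument, the blow-up points form a finite, isolated set. The delicate refinement is to upgrade isolated to \emph{isolated simple}: studying the weighted spherical average $r\mapsto r^{(Q-2)/2}\,\con{u}_i(r)$, where $Q=4$ is the homogeneous dimension of $\H^1$ and $\con{u}_i(r)$ is the average of $u_i$ over $\de B(x_i,r)$, one shows via the subelliptic Harnack inequality and the nondegeneracy of the bubble that this function has a single critical point for small $r$. This rules out secondary bubbling and yields the sharp estimate $M_iu_i\to a\,G$ in $\Gamma^2_{\mathrm{loc}}(M\setminus\{\bar x\})$, where $G>0$ is the Green's function of $L_\theta$ with pole at $\bar x$ and $a>0$.

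The heart of the proof is a CR Pohozaev identity. Applied to $u_i$ on a small ball $B(\bar x,\rho)$, it produces an interior term proportional to $(3-p_i)\int_{B}u_i^{p_i+1}\ge0$ together with a boundary integral over $\de B(\bar x,\rho)$. Inserting the sharp expansion of $u_i$ near $\bar x$ --- bubble plus Green's-function correction --- and letting $i\to\infty$ and then $\rho\to0$, the boundary term converges to a universal positive multiple of the p-mass $m_{\bar x}$ of $(M\setminus\{\bar x\},\ci{H})$ with its blow-up contact form. The sign bookkeeping then forces $m_{\bar x}\le0$, contradicting $m_{\bar x}>0$, so no blow-up can occur. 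I expect this mass extraction to be the principal obstacle: in dimension three the blow-up profile decays at exactly the critical rate, so the correction term carrying the mass sits at the same order as the leading bubble, and every error term in both the expansion and the Pohozaev identity must be controlled with sharp precision.

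Having excluded blow-up, $\N{u_i}_{L^\infty}\le C$ uniformly over the family. The upper bound $u\le C$ is then immediate, and the lower bound $u\ge1/C$ follows from the subelliptic Harnack inequality for $L_\theta$ together with a uniform lower bound on $\int_Mu^{p+1}$ coming from positivity of $\ci{Y}(M,\ci{H})$. Finally, treating $u^p$ as a uniformly bounded right-hand side, subelliptic Schauder estimates for $\Delta_b$ give $\Gamma^{2,\alfa}$ bounds, and bootstrapping yields $\N{u}_{\Gamma^{k,\alfa}}\le C$ for every $k$. Compactness in the $\Gamma^{k,\alfa}$ topology then follows from Ascoli--Arzel\`a applied with the stronger $\Gamma^{k+1,\alfa}$ bounds.
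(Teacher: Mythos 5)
Your proposal follows essentially the same route as the paper: blow-up analysis reducing to isolated simple blow-up points, convergence of $M_iu_i$ to a multiple of the Green's function, and a CR Pohozaev identity whose boundary term extracts the constant in the Green's function expansion, which the positive p-mass hypothesis (via Proposition 5.3 of \cite{CMY1}) forces to be positive, giving the contradiction. Two minor caveats: the reduction to $p_i\to 3$ rests on the Ma--Ou Liouville theorem for subcritical exponents on $\H^1$ rather than on compactness of the Folland--Stein embedding (which does not by itself give a priori bounds for all solutions), and the upgrade from isolated to isolated simple is obtained in the paper by a second application of the Pohozaev sign argument, not by nondegeneracy of the bubble.
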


We point out that by known results in functional analysis (Theorem \ref{TeoremaRegolarita} below), in the theorem above the pseudohermitian Hölder spaces spaces $\Gamma^{k,\alfa}$ could be replaced by the standard Hölder spaces $C^{k,\alfa}$.

Furthermore, we remark that when $\ci{Y}(M)\le 0$ it is easy to prove uniqueness up to normalization (Theorem 7.1 in \cite{JL1}), so the problem is trivial.

\vspace{3mm}

The hypothesis in the theorem above is quite hard to check directly, since the computation of the p-mass can be rather complex even for simple CR manifolds (see \cite{CMY2}). Fortunately the aforementioned positive mass theorem of Cheng, Malchiodi and Yang (Theorem 1.1 in \cite{CMY1}), along with a later result of Takeuchi which allows to replace the hypothesis thereof with a simpler one (Theorem 1.1 in \cite{T}), allow us to state the following corollary to Theorem \ref{IlTeorema}.
We recall that a CR manifold is said to be embeddable in $\C^N$ whenever there exists a CR embedding in $\C^N$, that is an embedding $\Phi:M\to\C^N$ such that $d\Phi(\ci{H})\subseteq\operatorname{span}\left\{\frac{\de}{\de z_1},\ldots,\frac{\de}{\de z_N}\right\}$.

\begin{corollario}
 Let $(M,\ci{H},\theta)$ be a three-dimensional strictly pseudoconvex pseudohermitian manifold with $\ci{Y}(M,\ci{H})>0$ which is CR-embeddable in $\C^N$ for some $N$, and not CR-equivalent to $S^3$ with the standard CR structure.
 
 Then the thesis of Theorem \ref{IlTeorema} holds.
\end{corollario}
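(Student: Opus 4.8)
The plan is to reduce the corollary entirely to the single hypothesis of Theorem~\ref{IlTeorema}, namely that $m_x>0$ for every $x\in M$, and to extract this strict positivity from the CR positive mass theorem combined with Takeuchi's nonnegativity result for the CR Paneitz operator. Since $\ci{Y}(M,\ci{H})>0$, the conformal sublaplacian $L_\theta$ is a positive operator admitting, for each $x$, a positive Green's function with pole at $x$; hence the blow-up contact form and the asymptotically flat blow-up manifold $(M\setminus\{x\},\ci{H})$ are well defined and $m_x$ is a well-posed quantity. (One checks moreover that the blow-up contact form has vanishing Webster scalar curvature away from the pole, so the positivity of the Yamabe class enters only through the existence and positivity of the Green's function.) Everything therefore comes down to showing $m_x>0$.

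First I would invoke the CR positive mass theorem of Cheng, Malchiodi and Yang (Theorem~1.1 in \cite{CMY1}): for an asymptotically flat three-dimensional pseudohermitian manifold arising as such a blow-up, if the CR Paneitz operator is nonnegative, then the p-mass satisfies $m_x\ge 0$, with equality precisely when the blow-up manifold is CR-equivalent to the Heisenberg group $\H^1$ with its standard structure. The hypothesis to be verified is thus the nonnegativity of the Paneitz operator. Because this operator is CR-covariant, its nonnegativity is a property of the CR structure $\ci{H}$ alone, independent of the chosen contact form, so it may be checked on the compact manifold $(M,\ci{H})$ and then carried over to the blow-up. Here I would apply Takeuchi's theorem (Theorem~1.1 in \cite{T}), which asserts that the CR Paneitz operator of a three-dimensional \emph{embeddable} CR manifold is nonnegative; since $(M,\ci{H})$ is assumed CR-embeddable in $\C^N$, the hypothesis holds and the positive mass theorem yields $m_x\ge 0$.

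It then remains to upgrade this to strict positivity, which is where the rigidity case enters. I would argue that $m_x=0$ forces, by the equality statement above, the blow-up $(M\setminus\{x\},\ci{H})$ to be CR-equivalent to $\H^1$; inverting the blow-up, or equivalently using that the Cayley transform identifies $\H^1$ with $S^3$ minus a point, this is in turn equivalent to $(M,\ci{H})$ being CR-equivalent to $S^3$ with its standard CR structure. Since $M$ is assumed not CR-equivalent to $S^3$, we conclude $m_x>0$ for every $x\in M$, and Theorem~\ref{IlTeorema} applies.

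The main obstacle is not any single computation but the careful matching of the cited statements. One must check that the Paneitz nonnegativity furnished by Takeuchi on the compact manifold is exactly the hypothesis required by the positive mass theorem for the non-compact, asymptotically flat blow-up, leaning on the CR-invariance of that condition; and one must confirm that the rigidity case of the positive mass theorem genuinely characterizes $S^3$, by establishing the equivalence between ``blow-up CR-equivalent to $\H^1$'' and ``$M$ CR-equivalent to $S^3$'', together with the preservation of embeddability and of the Yamabe positivity in the passage to the blow-up. This bookkeeping of hypotheses across the two cited results is the delicate part of the argument.
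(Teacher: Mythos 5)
Your argument is exactly the one the paper intends: the paper gives no separate proof of the corollary, but the sentence preceding its statement indicates precisely this reduction — Takeuchi's theorem (Theorem 1.1 in \cite{T}) gives nonnegativity of the CR Paneitz operator for embeddable manifolds, which is the hypothesis of the Cheng--Malchiodi--Yang positive mass theorem (Theorem 1.1 in \cite{CMY1}), whose conclusion $m_x\ge 0$ with equality only for $S^3$ yields $m_x>0$ under the non-equivalence assumption, so Theorem \ref{IlTeorema} applies. Your proposal is correct and follows essentially the same route, with the added (and reasonable) care about where the Paneitz nonnegativity is checked and how the rigidity case is matched to $S^3$.
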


\begin{osservazione}
 Unlike the case of low-dimensional Riemannian manifolds, Theorem \ref{IlTeorema} is not true if the only hypothesis on $M$ is that it is not CR-equivalent to $S^3$. In fact by a result mentioned above (Theorem 1.3 in \cite{CMY2}) there exist three dimensional CR manifolds for which $\ci{Y}(M)$ is not attained.
 It is standard to prove that the subcritical functional
 $$\g{Q}_p(u) = \frac{\int_M uL_{\theta}u}{\left(\int_Mu^p\theta\wedge{(d\theta)}^n\right)^{\frac{1}{2}}}$$
 has a minimum point $u_p$ for $p\in[2,3)$ (Theorem 6.2 in \cite{JL1}). Were the thesis of Theorem \ref{IlTeorema} true, $u_{p_n}\to u$ in $C^2$ for some $p_n\to 3$, and by Lemma 6.4 (b) in \cite{JL1}, $u$ would be a minimum point for $\g{Q}$.
 
 We do not know whether there exists a CR manifold not CR-equivalent to $S^3$ for which the set of solution of the CR Yamabe problem $\left\{ u>0 \;\middle|\; L_{\theta}u = u^3 \right\}$ is not compact.
\end{osservazione}

%\begin{teorema}[Cheng, Malchiodi, Yang]
% Let $(M,\ci{H})$ be a compact strictly pseudoconvex three-dimensional pseudohermitian manifold with $\ci{Y}(M,\ci{H})>0$. If the CR Paneitz operator $P$ satisfies $P\ge 0$, then for every $x\in M$ it holds that $m_x\ge 0$, with equality if and only if $(M,\ci{H})$ is CR equivalent to $S^3$ with the standard CR structure thereof.
%\end{teorema}

\paragraph{Acknowledgements.} The author is supported by the fund “MIUR PRIN 2017” - CUP:E64I19001240001.
He would like to thank Andrea Malchiodi for suggesting the problem and for useful discussion.

\section{A review of CR geometry}\label{Preliminari}
We recall some facts about CR geometry that we will need in this work. For a complete introduction we refer to \cite{DT}.

A CR manifold is a manifold $M$ of dimension $2n+1$ endowed with a $n$-dimensional subbundle $\ci{H}$ of $TM\otimes\C$ such that $\ci{H}\cap\overline{\ci{H}}=\{0\}$ and $[\ci{H},\ci{H}]\subseteq\ci{H}$.

The Levi distribution is $H(M)=\g{Re}(\ci{H}\oplus\overline{\ci{H}})$.
%and carries the natural complex structure defined by $J(Z+\con{Z})=i(Z-\con{Z})$.
A CR manifold is said nondegenerate if $H(M)$ is a contact distribution. In this article all CR manifolds will be assumed to be nondegenerate.
%Given a contact distribution $H(M)$, if the line bundle $TM/H(M)$ is orientable (condition which can be easily be proved to be equivalent to the orientability of $M$) then there exists a differential form $\theta$ such that $H(M)=\ker\theta$.
%Such a form is called a contact form, or in particular case of CR geometry, a pseudohermitian form or pseudohermitian structure.
A contact form for $\H(M)$ is called a pseudohermitian form or pseudohermitian structure.
A nondegenerate CR manifold equipped with a contact form is called pseudohermitian manifold.

The sesquilinear form $L_{\theta}\in\Gamma(\ci{H}^*\otimes\overline{\ci{H}}^*)$ defined by
$$L_{\theta}(Z,\overline{W}) = i\theta([Z,\overline{W}]) = -id\theta(Z,\overline{W})$$
is called Levi form, and the degeneracy thereof is equivalent to the nondegeneracy of the CR structure.
A CR structure is said strictly pseudoconvex if it admits a contact form with positive definite Levi form. In such a case, all contact forms have definite Levi form, and $L_{\theta}$ is positive definite if and only if $L_{-\theta}$ is negative definite.
In a certain sense strictly pseudoconvex CR manifolds occupy, among CR manifolds, the role that Riemannian manifolds occupy among pseudoriemannian manifolds.
In the following we will assume all manifolds to be strictly pseudoconvex and all contact forms to have a positive Levi form.
Notice that when $n=1$, the main focus of this article, all manifolds are strictly pseudoconvex, and so up to a change of sign every contact form can be assumed to have positive Levi form.

The Levi form induces a symmetric form $G_{\theta}$ on $H(M)$
%On a pseudohermitian manifold $H(M)$ carries the natural symmetric form $G_{\theta}(X,Y)=d\theta(X,JY)$,
which is positive definite if and only if the pseudohermitian manifold is pseudoconvex.
In such a case $(M,H(M),G_{\theta})$ is a subriemannian manifold. We will call $d$ the Carnot-Carathéodory distance.

On a pseudohermitian manifold there exists a unique vector field $T$, called Reeb field, such that $\theta(T)=1$ and $i_T d\theta=0$.
It can be used to extend $G_{\theta}$ to a pseudoriemannian metric (Riemannian in the strictly pseudoconvex case) $g_{\theta}$, called the Webster metric, such that $T$ is orthogonal to $H(M)$ and $g_{\theta}(T,T)=1$.

Given a real function $u$ of class $C^1$, we define $\nabla^Hu$ as the subriemannian gradient, that is the section of $H(M)$ such that $G_{\theta}(\nabla^Hu, X)= Xu$ for every section $X$ of $H(M)$.

On a pseudohermitian manifolds there are two natural volume forms, the Riemannian one $dV$ associated to $g_{\theta}$ and the contact one $\theta\wedge(d\theta)^n$.
They just differ by a dimensional constant, and so choosing one over the other does not create substantial difference.
For commodity to apply the divergence theorem, in this article we will always use the first one.

Since we have a volume form, we can define the divergence of a vector field as
$$(\div X)dV = \ci{L}_X(dV).$$

We define the sublaplacian as the differential operator $\Delta_bu=\div(\nabla^Hu)$.
$\Delta_b$ is not elliptic, but it is degenerate elliptic and shares many properties with the Laplacian.

%\begin{teorema}
% On a pseudohermitian manifold there exists a unique connection $\nabla$ such that:
% \begin{itemize}[topsep=2pt,itemsep=2pt,partopsep=2pt, parsep=2pt]
%  \item $H(M)$ is parallel with respect to $\nabla$,
%  \item $\nabla J=0$, $\nabla g_{\theta}=0$,
%  \item $T_{\nabla}(Z,W)=0$ for every $Z,W\in\Gamma(\ci{H})$,
%  \item $T_{\nabla}(Z,W)=iL_{\theta}(Z,\con{W})T$ for every $Z,W\in\Gamma(\ci{H})$,
%  \item if $\tau(X)=T_{\nabla}(T,X)$ (where $T_{\nabla}$ is the torsion of $\nabla$) then $\tau\circ J+J\circ\tau=0$.
% \end{itemize}
%\end{teorema}
%The connection of the above theorem is called Tanaka-Webster connection. The tensor $\tau$ is called pseudohermitian torsion.

On any nondegenerate pseudohermitian manifold there exist a canonical connection called Tanaka-Webster connection.

To perform local computations, consider a local frame of $\ci{H}$, $(Z_1,\ldots,Z_n)$.
Define $Z_{\con{\alpha}}=\con{Z}_{\alpha}$ and $Z_0=T$. Let $(\theta^1,\ldots,\theta^n,\theta^{\con{1}},\ldots,\theta^{\con{n}},\theta^0)$ be the dual frame of $(Z_1,\ldots,Z_n,Z_{\con{1}},\ldots,Z_{\con{n}},Z_0)$ (in particular we take $\theta^0=\theta$). Given a tensor $\g{T}$ of type $(k,\ell)$ and indexes $A_i,B_i\in\{1,\ldots,n,\con{1},\ldots,\con{n},0\}$ let us define
$${\g{T}^{A_1\ldots A_k}}_{B_1\ldots B_{\ell}} = \g{T}(\theta^{A_1},\ldots,\theta^{A_k},Z_{B_1},\ldots,Z_{B_{\ell}}).$$
In the sequel Greek letters will denote indexes in $\{1,\ldots,n\}$.\\
Let $h_{\alpha\con{\beta}}=L_{\theta}(Z_{\alpha},Z_{\con{\beta}})$ be the coefficients of the Levi form. $h_{\alpha\con{\beta}}$ and its inverse $h^{\alpha\con{\beta}}$ can be used to raise and lower indexes. Therefore when using the Einstein convention, two repeated indexes are summed if one of them is barred and the other is non-barred when they are both up or both up, while in the other case they are summed if they are both barred or both non barred.

Since $\ci{H}$ is parallel, there exist 1-forms ${\omega_{\alpha}}^{\beta}$ such that
$$\nabla Z_{\alpha} = {\omega_{\alpha}}^{\beta}\otimes Z_{\beta}.$$
We separate indexes relative to covariant derivatives with respect to the Tanaka-Webster connection through a comma. Thus, for example, if $X=X_{\alfa}\theta^{\alfa}$ we have
$$X_{\alfa,\beta}= Z_{\beta}(X_{\alfa})-{\omega_{\alpha}}^{\gamma}(Z_{\beta})X_{\gamma}.$$

The following formula for the sublaplacian holds:
\begin{equation}\label{FormulaSublaplaciano}
 \Delta_b u = {u^{,\alpha}}_{\alpha}+ {u^{,\con{\alpha}}}_{\con{\alpha}}= u_{,\alpha\con{\alpha}}+u_{,\con{\alpha}\alpha}.
\end{equation}

The pseudo-Hermitian curvature tensor is the section of $\ci{H}^*\otimes\con{\ci{H}}^*\otimes\ci{H}^*\otimes\con{\ci{H}}$ defined as
$$R(Z,\con{W})X = \nabla_Z\nabla_{\con{W}}X - \nabla_{\con{W}}\nabla_ZX - \nabla_{[Z,\con{W}]}X$$
for $Z,W,X\in\Gamma(T^{(1,0)}M)$. We indicate the coordinates thereof by
$${R_{\alpha\con{\beta}\mi}}^{\ni} = L_{\theta}(R(Z_{\alpha},Z_{\con{\beta}})Z_{\mi},Z_{\ni}).$$
By contracting the pseudo-Hermitian curvature tensor, we get the pseudo-Hermitian Ricci tensor $R_{\alpha\con{\beta}}={R_{\mi\con{\beta}\alpha}}^{\mi}$, and contracting another time we get the Webster scalar curvature $R=h^{\alpha\con{\beta}}R_{\alpha\con{\beta}}$.

%Given that a contact form on a CR structure is determined only up to the multiplication by a smooth function, it is natural, in way analogous to conformal geometry, to study the problem of finding contact forms such that the Webster curvature is constant (the CR Yamabe problem) or a prescribed function (the Webster curvature prescription problem). \rosso{(INSERIRE CITAZIONI)}
%Analogously to conformal geometry, we can define the operator $L_{\theta}=-b_n\Delta_b + R$, where $b_n=2+\frac{2}{n}$, called the conformal sublaplacian, verifying the covariant transformation law
The conformal sublaplacian $L_{\theta}=-b_n\Delta_b + R$, where $b_n=2+\frac{2}{n}$, verifies the covariant transformation law
\begin{equation}\label{LeggeTrasformazioneSublaplaciano}
 L_{\widetilde{\theta}}\left(\frac{\phi}{u}\right) = u^{-\frac{n+2}{n}}L_{\theta}\phi
\end{equation}
where  $\widetilde{\theta}=u^{\frac{2}{n}}\theta$. In particular we obtain the formula for the transformation of the Webster curvature
\begin{equation}\label{LeggeTrasformazioneCurvatura}
 \widetilde{R}= u^{-\frac{n+2}{n}}L_{\theta}u =u^{-\frac{n+2}{n}}(-b_n\Delta_b + R)u.
\end{equation}
%From this formula it can be deduced that a pseudohermitian form $\theta$ has constant Webster curvature if and only if it is a critical point of the functional
%$$\ci{J}(\theta) = \frac{\int_M R \theta\wedge(d\theta)^n}{\left(\int_M\theta\wedge(d\theta)^n\right)^{\frac{n}{n+1}}}$$
%among the pseudohermitian forms on the same CR structure.
%The infimum of $\ci{J}(\theta)$ among such a class of forms is called CR Yamabe invariant of $(M,\ci{H})$, and denoted by $\ci{Y}(M)$.

%It can be proved that on a compact CR manifold $\ci{Y}(M)$ is positive/negative/zero if and only if the first eigenvalue of $L_{\theta}$ is positive/negative/zero respectively. Furthermore, if a CR manifold admits a contact structure such that, respectively,
%\begin{itemize}[topsep=2pt,itemsep=2pt,partopsep=2pt, parsep=2pt]
% \item $R\ge 0$ but $R\not\equiv 0$
% \item $R\equiv 0$
% \item $R\le 0$ but $R\not\equiv 0$
%\end{itemize}
%then $\ci{Y}(M)$ is positive/zero/negative.

\subsection{The Heisenberg group}
The most important CR manifold is the Heisenberg group $\H^n$, that is $\C^n\times\R$ with the group law $(z,t)\cdot(w,s) = (z+w,t+s+2\g{Im}(z\cdot\con{w}))$, endowed with the left-invariant CR structure generated by the left-invariant vector fields
$$Z_{\alfa}= \frac{\de}{\de z_{\alfa}} +i\overline{z}_{\alfa}\frac{\de}{\de t}.$$
Up to a constant there exist a unique left-invariant 1-form annihilating $H(\H^n)$, that is $\theta= dt+i\sum_{\alfa}\left( z^{\alpha}d\con{z}^{\alpha}-\con{z}^{\alpha}dz^{\alpha}\right)$, and with this choice it can be easily computed that $\H^n$ is strictly pseudoconvex, and in particular that with respect to this frame $h_{\alfa\con{\beta}}=2\delta_{\alfa\con{\beta}}$.
Furthermore it can be proved that the Reeb vector field is $T=\frac{\de}{\de t}$ and that the Webster curvature is zero.

The Korányi norm is defined as $|(z,t)|=\left(|z|^4+t^2\right)^{\frac{1}{4}}$.

The fundamental solution for the CR sublaplacian of $\H^n$, which by formula \eqref{FormulaSublaplaciano} is
$$\Delta_b = \frac{1}{2}\sum_{\alfa=1}^n(Z_{\alfa}Z_{\con{\alfa}}+Z_{\con{\alfa}}Z_{\alfa}),$$
has been computed by Folland and Stein (see Section 6 in \cite{FS} or Theorem 3.9 in \cite{DT}): it holds that
\begin{equation}\label{SoluzioneFondamentale}
 -\Delta_b\left(\frac{\gamma_n}{|(z,t)|^{2n}}\right) =\delta_0
\end{equation}
with $\gamma_n = 2^{2n-2}\pi^{-n-1}\left|\Gamma\left(\frac{n}{2}\right)\right|^2$.

$\H^n$ admits a family of CR and group automorphisms analogous to the dilations of $\R^n$, defined by $\delta_{\lambda}(z,t)=(\lambda z,\lambda^2 t)$, and called Heisenberg dilations.

Since $(\delta_{\lambda})^*dV = \lambda^{2n+2}dV$, the number $Q=2n+2$ is called homogeneous dimension of $\H^n$.

$\H^n$ plays a role among pseudoconvex pseudohermitian manifolds analogous to the role of $\R^n$ among Riemannian manifolds;
%For example it is locally the unique pseudoconvex pseudohermitian manifold with zero curvature tensor and zero pseudohermitian torsion $\tau$ \rosso{(FONTE)}, and, more importantly for us,
indeed
every pseudoconvex pseudohermitian manifold can locally be appoximated with $\H^n$, through coordinates analogous to the normal coordinates of Riemannian geometry known as pseudohermitian normal coordinates.

Given $x\in M$, pseudohermitian normal coordinates are given by a function $\Phi:\Omega\to M$ with $0\in\Omega$ and $\Phi(0)=x$ approximating the pseudohermitian structure of $M$ as much as possible (see \cite{JL3} and the appendix for precise statements).
When using pseudohermitian normal coordinates, we will generally omit $\Phi$ and, with abuse of notation, we will identify $\Omega$ and $\Phi(\Omega)$. We will indicate by a circle the pseudohermitian objects coming from $\H^n$ through normal coordinates ($\cerchio{Z}_{\alfa},\cerchio{\Delta}_b,d\cerchio{V},\ldots$). Balls will be meant with respect to the Korányi distance, not to the Carnot-Carathéodory distance $d$. Anyways it is known that on $\H^n$ (or on balls thereof) the two distances are equivalent.

\begin{osservazione}\label{OsservazioneRiscalamento}
 When working in pseudohermitian coordinate around some point $\overline{x}$, we will often use dilations and translations to rescale the equation. If $u$ solves
 $$L_{\theta}u = \widetilde{R}u^p$$
 in $\Omega$, $x\in\Omega$ and $\lambda>0$ then $\widetilde{u}=\lambda^{\frac{2}{p_i-1}}u\circ\delta_{\lambda}\circ L_x$ (where $L_{x_i}(x)=x_i^{-1}x$ is the left translation) solves
 $$L_{\widetilde{\theta}}\widetilde{u} = (\widetilde{R}\circ\delta_{\lambda}\circ L_x)\widetilde{u}^p$$
 on $\delta_{\frac{1}{\lambda}}(L_{x^{-1}}(\Omega))$, where $\widetilde{\theta}=\frac{1}{\lambda^2}(\delta_{\lambda}\circ L_x)^*\theta$ and $L_{\widetilde{\theta}}$ is the conformal sublaplacian associated to the pseudohermitian manifold $(\delta_{\frac{1}{\lambda}}(L_{x^{-1}}(\Omega)),\widetilde{\theta},(\delta_{\lambda}\circ L_x)^*\ci{H})$.
 Notice that as $x\to\overline{x}$ and $\lambda\to\infty$,
 \begin{equation}\label{ConvergenzaStrutture}
  \left(\delta_{\frac{1}{\lambda}}(L_{x^{-1}}(\Omega)),\widetilde{\theta},(\delta_{\lambda}\circ L_x)^*\ci{H}\right) \to (\H^n,\cerchio{\theta},\cerchio{\ci{H}})
 \end{equation}
 smoothly on compact sets.
\end{osservazione}

In conformal geometry there exist local coordinates adapted to the conformal structure obtained using normal coordinates with respect to a metric in the conformal class that is ``as flat as possible'' around the base point, known as ``conformal normal coordinates'' and introduced by Lee and Parker in \cite{LP}.
Analogously in CR geometry local coordinates adapted to the CR structure, obtained using pseudohermitian normal coordinates with respect to an appropriate contact form, were introduced by Jerison and Lee in \cite{JL3}. They are known as CR normal coordinates.

We recall the recent classification theorem for contact forms with constant positive Webster curvature in $\H^1$ proved by Catino, Li, Monticelli and Roncoroni in \cite{CLMR}.

\begin{teorema}\label{Classificazione}
 If $u$ is a positive solution to
 $$-\Delta_b u = u^3$$
 on $\H^1$, then $u = U\circ\delta_{\lambda}\circ L_{x_0}$ for some $\lambda\in(0,\infty)$ and $x_0\in\H^1$, where
 $$U(z,t)=\frac{c_1}{\left(t^2+(1+|z|^2)^2\right)^{\frac{1}{2}}}$$
 is the Jerison-Lee solution and $L_{x_0}(x)=x_0^{-1}\cdot x$ is the left translation.
\end{teorema}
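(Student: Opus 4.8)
The plan is to turn the PDE into a geometric rigidity statement by passing to the conformally related pseudohermitian structure. Writing $\widetilde{\theta}=u^{2}\theta$ (the case $n=1$ of $\widetilde{\theta}=u^{2/n}\theta$) and using the transformation law \eqref{LeggeTrasformazioneCurvatura} together with $R\equiv 0$ on $\H^1$, the equation $-\Delta_b u=u^3$ says exactly that the Webster curvature of $\widetilde{\theta}$ is the constant $\widetilde{R}\equiv 4$. Thus the theorem is equivalent to classifying the contact forms on $\H^1$, CR-equivalent to the standard one, with constant positive Webster curvature. The Jerison--Lee solution $U$ is the pull-back of the standard form of $S^3$ through the Cayley transform, and since that form has vanishing pseudohermitian torsion and the Cayley transform is a CR map, the target structure must satisfy $\widetilde{A}_{11}\equiv 0$. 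The decisive feature of dimension three is that the trace-free pseudohermitian Ricci tensor vanishes automatically (for $n=1$ the Ricci tensor is a scalar multiple of the Levi form), so the only obstruction to rigidity is the torsion: the whole proof reduces to showing $\widetilde{A}_{11}\equiv 0$.

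To prove the vanishing of the torsion I would use an integral (Bochner-type) method rather than moving planes, which as noted is unavailable in $\H^1$. The idea is to establish a divergence identity of the schematic form $\div X = |\nabla\widetilde{A}|^2 + (\text{curvature--torsion terms})$, obtained by applying the commutation relations and Bianchi identities of the Tanaka--Webster connection to $\widetilde{A}_{11}$ and exploiting that $\widetilde{R}$ is constant, in analogy with the Catino--Monticelli--Roncoroni treatment of the Riemannian critical equation. Integrating this identity over $\H^1$ against a carefully chosen positive weight (a suitable power of $u$, playing the role of the conformal factor) should produce, once the boundary terms are discarded, a sign-definite quantity forcing $\nabla\widetilde{A}\equiv 0$ and hence $\widetilde{A}\equiv 0$.

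For the integration by parts to carry no boundary contribution at infinity I would need sharp decay estimates on $u$ and its derivatives, and this is precisely where the hypothesis $u\in L^{2+2/n}$ of Jerison--Lee must be dispensed with. I would obtain such estimates by a CR--Kelvin transform built from the Korányi inversion: this moves the point at infinity to a finite point and converts the problem into the local study of a solution with a possible isolated singularity. A Harnack inequality for $\Delta_b$ together with a removable-singularity analysis then shows that $u$ is either bounded near that point or blows up like the fundamental solution $|(z,t)|^{-2}$; transforming back gives the decay of $u$ at infinity, and interior subelliptic Schauder estimates upgrade this to decay of the covariant derivatives appearing in the weighted identity.

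Once $\widetilde{A}_{11}\equiv 0$, the structure $(\H^1,\widetilde{\theta})$ has vanishing torsion and constant positive Webster curvature; such a structure is CR-spherical, and integrating its structure equations and matching with the complete model coming from $S^3$ identifies $\widetilde{\theta}$, hence $u$, with the Jerison--Lee family $U\circ\delta_{\lambda}\circ L_{x_0}$. I expect the main obstacle to be the Bochner step: unlike in the Riemannian setting, the sublaplacian is only subelliptic and the commutator $[Z_1,Z_{\con{1}}]$ produces terms along the Reeb direction $T$, so the divergence identity acquires extra torsion- and $T$-derivative terms whose sign must be controlled by the precise choice of weight. The second delicate point is the asymptotic analysis, since without the finite-volume assumption one cannot invoke the Jerison--Lee removable-singularity theorem and must instead control the behaviour of the solution near infinity by hand.
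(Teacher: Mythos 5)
The paper does not prove Theorem \ref{Classificazione} at all: it is imported as an external input, namely the recent Liouville theorem of Catino, Li, Monticelli and Roncoroni \cite{CLMR}. Indeed the whole point of the surrounding discussion is that this classification, previously available only under the finite-volume hypothesis $u\in L^{2+\frac{2}{n}}$ of Jerison and Lee \cite{JL2}, has now been established unconditionally, which is what makes the blow-up analysis of the present article possible. So there is no internal argument to compare yours against; the relevant comparison is with \cite{JL2} and \cite{CLMR}.

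As a proof, your proposal correctly identifies the strategy used in that literature (conformal change to a structure of constant Webster curvature, reduction to the vanishing of the pseudohermitian torsion $\widetilde{A}_{11}$, a divergence identity with sign-definite bulk term, and decay estimates to justify discarding boundary terms), but it has a genuine gap at its centre: the ``Bochner-type'' identity $\div X=|\nabla\widetilde{A}|^2+\ldots$ is only asserted schematically, and producing it is the entire difficulty. This is not a routine Bochner computation: the Jerison--Lee identity underlying both \cite{JL2} and \cite{CLMR} is a very specific, highly non-obvious combination of curvature, torsion and third-order terms (Jerison and Lee found it essentially by a systematic search through a large space of candidate divergences), and the Reeb-direction terms you yourself flag as delicate are precisely what defeats a naive attempt. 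The second gap is the asymptotic analysis: you assert that a Kor\'anyi--Kelvin transform plus a removable-singularity argument yields enough decay of $u$ and its covariant derivatives to kill the boundary terms, but without an a priori integrability assumption this is exactly the new content of \cite{CLMR} and cannot be taken for granted --- an arbitrary positive solution is not known in advance to decay at all, and controlling it near the inverted singular point without $u\in L^4$ is where the real work lies. Until both of these steps are carried out explicitly, the proposal is a plausible research programme rather than a proof.
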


\subsection{Function spaces and estimates for the sublaplacian}\label{CapitoloSpazi}
%The Folland-Stein space $S^{k,p}(M)$ for $1\le p<\infty$ is defined as the space of functions $u\in L^p(M)$ such that for every multi-index $(A_1,A_2,\ldots,A_{\ell})$ with $\ell\le k$ and $A_j\in(1,\ldots,n)\cup(\con{1},\ldots,\con{n})$ the distributional covariant derivative $u_{,A_1\ldots A_{\ell}}$ verifies $u_{,A_1\ldots A_{\ell}}u_{,\con{A_1}\ldots\con{A_{\ell}}}\in L^{\frac{p}{2}}(M)$ (the latter quantity does not depend on the frame, so the definition is well posed), with the natural norm.

%Equivalently, if $M$ is compact, fixed a finite cover of pseudohermitian normal coordinate open sets $U_j$ and a partition of the unity $\phi_j$ subordinated to it, $u\in S^{k,p}(M)$ if and only if $Z_{A_1}\ldots Z_{A_{\ell}}(\phi_ju)\in L^p(U_j)$ for every multi-index $(A_1,A_2,\ldots,A_{\ell})$ with $\ell\le k$ and $A_j\in(1,\ldots,n)\cup(\con{1},\ldots,\con{n})$, where the fields $Z_{A_j}$ are \rosso{(DEFINIRE)} \rosso{(CANCELLARE L'ULTIMO PARAGRAFO?)}

In order to study analytical problems on CR manifolds, function spaces adapted to their structure are needed.
We refer to Section 5 in \cite{JL1} or Subsection 3.4 in \cite{DT} for their definition.

The Folland-Stein spaces $S^{k,p}(M)$ are analogous to the Sobolev spaces, but using only horizontal derivatives (the ones in the directions of $H(M)$).

%Using the Carnot-Carathéodory distance, we can define the Hölder space $\Gamma^{k,\alfa}(M)$ for $0<\alfa<1$ as the set of bounded functions such that for every multi-index $(A_1,A_2,\ldots,A_{\ell})$ with $\ell\le k$ and $A_j\in(1,\ldots,n)\cup(\con{1},\ldots,\con{n})$ the distributional covariant derivative $u_{,A_1\ldots A_{\ell}}$ is $\alfa$-Hölderian with respect to $d$, endowed with the natural norm.
%\rosso{(SE IL PARAGRAFO SOPRA VA SALVATO, NE VA AGGIUNTO UNO ANALOGO QUI)}

Analogously, using the Carnot-Carathéodory distance, Hölder spaces which consider only horizontal derivatives, denoted by $\Gamma^{k,\alfa}(M)$, can be defined.

Let $C^{k,\alfa}(M)$ be the standard Hölder space relative to the Riemannian metric $g_{\theta}$.

Then the following immersions and regularity results hold.
These results were firstly stated in Section 5 in \cite{JL1} noticing that the proofs for analogous results by Folland and Stein in \cite{FS} for the Kohn Laplacian $\square_b$ are valid also for $\Delta_b$.

\begin{teorema}\label{TeoremaRegolarita}
 Let $U$ be a relatively compact open subset of a pseudohermitian manifold $(M,\ci{H},\theta)$.
 Then, if $\alfa\in(0,1)$, $k,\ell\in\mathbf{N}$, $p\in(1,\infty)$:
 \begin{enumerate}[topsep=2pt,itemsep=2pt,partopsep=2pt, parsep=2pt]
  \item $\N{u}_{\Gamma^{k,\alfa}(U)} \le C \N{u}_{S^{\ell,p}(M)}$ if $\frac{1}{p}=\frac{\ell-k-\alfa}{2n+2}$;
  \item $C_1\N{u}_{C^{\lfloor k/2\rfloor,\lfloor\alfa/2\rfloor +\{k/2\}}(U)} \le  \N{u}_{\Gamma^{k,\alfa}(M)} \le C_2 \N{u}_{C^{k,\alfa}(M)}$ where $\{k\}$ is the fractional part of $k$;
  \item $\N{u}_{S^{k+2,p}(U)}\le C\left(\N{\Delta_b u}_{S^{k,p}(M)} + \N{u}_{S^{k,p}(M)}\right)$;
  \item $\N{u}_{\Gamma^{k+2,\alfa}(U)}\le C\left(\N{\Delta_b u}_{\Gamma^{k,\alfa}(M)} + \N{u}_{\Gamma^{k,\alfa}(M)}\right)$.
 \end{enumerate}
 Furthermore for any of the above estimates there exists a $m$ such that in a $C^m$ neighborhood of the pseudohermitian structure $(\ci{H},\theta)$, the immersion holds with uniform constant.
 %\rosso{(DIRE QUAL È $m$?)}
\end{teorema}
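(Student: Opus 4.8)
The four assertions are local and, as the text indicates, all descend from the Folland--Stein calculus on the Heisenberg group \cite{FS}; so the plan is to reduce each one to the corresponding statement for the model sublaplacian $\cerchio{\Delta}_b$ on $\H^n$ and then transfer it to a general structure via the approximation \eqref{ConvergenzaStrutture}. First I would fix a point of $U$, pass to pseudohermitian normal coordinates, and rescale by the Heisenberg dilations $\delta_\lambda$ so that the frozen principal part of $\Delta_b$ becomes $\cerchio{\Delta}_b$. The basic object is the fundamental solution \eqref{SoluzioneFondamentale}: it is $\delta_\lambda$-homogeneous of degree $2-Q$, where $Q=2n+2$ is the homogeneous dimension, so convolution against it behaves, for the anisotropic scaling, exactly like a fractional integration $I_2$ of order $2$. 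The whole scale of convolutions with kernels homogeneous of degree $\beta-Q$ has the expected mapping properties between the Folland--Stein and horizontal Hölder spaces, the gains always governed by $Q$ rather than by the topological dimension $2n+1$.

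With this in hand, (1) is the subelliptic Sobolev--Morrey embedding. Writing $u=G*(\cerchio{\Delta}_b u)$ modulo a smoothing remainder and iterating the mapping properties of the fractional integrations $I_\beta$, one gains $\ell-k-\alfa$ anisotropic orders of regularity, and the Sobolev--Morrey balance in homogeneous dimension $Q=2n+2$ is precisely $\tfrac1p=\tfrac{\ell-k-\alfa}{2n+2}$; this explains the appearance of $2n+2$ rather than $2n+1$ in the hypothesis. Assertions (3) and (4) are the a priori regularity estimates: on $\H^n$ the map $u\mapsto G*(\cdot)$ is a parametrix, $\cerchio{\Delta}_b(G*f)=f$ modulo smoothing, and the Calderón--Zygmund and Schauder theory of the homogeneous kernel $G$ yields $\N{u}_{S^{k+2,p}}\lesssim\N{\cerchio{\Delta}_b u}_{S^{k,p}}+\N{u}_{S^{k,p}}$ together with its $\Gamma$-analogue. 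To pass to a variable structure I would write $\Delta_b=\cerchio{\Delta}_b+E$ in normal coordinates, where the error $E$ is of lower anisotropic order and small on small Korányi balls; covering $U$ by such balls, applying the model estimate on each, and absorbing $E$ after rescaling gives the stated interior bounds.

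Assertion (2) is a comparison of the horizontal Hölder spaces with the Euclidean ones and rests on the relation between the Carnot--Carathéodory distance $d$ and the Riemannian distance $d_{g_\theta}$: near a point $d_{g_\theta}\lesssim d\lesssim d_{g_\theta}^{1/2}$, the square root reflecting that the Reeb direction is generated by a commutator $[Z_\alfa,\con{Z}_\beta]$ of horizontal fields. Consequently a vertical ($T$) derivative counts as two horizontal ones and a horizontal $\alfa$-Hölder quotient in $d$ becomes an $\alfa/2$-Hölder quotient in $d_{g_\theta}$; propagating this bookkeeping through $k$ horizontal derivatives produces exactly $\lfloor k/2\rfloor$ Euclidean derivatives with the Hölder exponent $\lfloor\alfa/2\rfloor+\{k/2\}$ of the left inequality, while the right inequality is immediate because horizontal derivatives are particular Euclidean ones and $d_{g_\theta}\lesssim d$ controls the quotients. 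The uniformity statement then follows by inspection: the parametrix, the perturbation bound on $E$, and the distance comparison all depend on the structure through only finitely many derivatives of its coefficients, which vary continuously in the $C^m$ topology once $m$ is taken large enough.

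The hard part, and the only point genuinely beyond citing \cite{FS}, is the transfer from $\H^n$ to a general pseudohermitian structure with uniform constants. One must check that in pseudohermitian normal coordinates the error $E=\Delta_b-\cerchio{\Delta}_b$ is truly subordinate in the anisotropic count, so that after dilating by $\delta_\lambda$ it can be absorbed into the left-hand side, and that the covering and iteration argument produces constants depending on a fixed finite number of derivatives of $(\ci{H},\theta)$. This is exactly the observation attributed to \cite{JL1}, that the Folland--Stein arguments for the Kohn Laplacian $\square_b$ carry over verbatim to $\Delta_b$.
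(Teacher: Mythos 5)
Your proposal is correct and follows essentially the same route as the paper, which proves nothing here beyond observing that the statements are the Folland--Stein estimates of \cite{FS} as adapted to $\Delta_b$ in Section 5 of \cite{JL1}, with uniformity of the constants read off from those proofs. Your sketch (parametrix via the homogeneous fundamental solution, Sobolev--Morrey balance in the homogeneous dimension $Q=2n+2$, absorption of the error $\Delta_b-\cerchio{\Delta}_b$ in normal coordinates, and the local comparison $d_{g_\theta}\lesssim d\lesssim d_{g_\theta}^{1/2}$ for point 2) is an accurate reconstruction of exactly the argument being cited.
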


The uniformity of the constant is not stated in \cite{FS}, but follows from the proof.
As in the case of the Laplacian, one can take $M=U$ in the above estimates if some regularity condition is assumed on $U$, but we will not need this.

As a last remark, we point out that the weak maximum principle holds for operators of the form $-\Delta_b+f$ with $f\ge 0$ because $\Delta_b$ is degenerate elliptic; furthermore for operators of the form $-\Delta_b+f$ the Harnack inequality holds with a constant depending on the pseudohermitian data and on $\N{f}_{L^{\infty}}$, and it can be proved by following the proof for elliptic operators from Theorem 8.20 in \cite{GT} (see for example \cite{GuL}
%or \cite{BB}
).

\section{A Pohozaev identity
%for the Heisenberg group
}
As in the Riemannian case, where the blow-up analysis requires using the Pohozaev identity for $\R^n$, in the CR case we will need a Pohozaev formula for the Heisenberg group.
Pohozaev-type formulas have already been studied on the Heisenberg group (see \cite{GaL}) and on more general Carnot groups (see \cite{GV}).
We recall the formula that we will need, and for utility of the reader we include the proof.

We warn the reader that in the literature generally the subriemannian metric used is $\frac{1}{4}G_{\theta}$ instead of $G_{\theta}$, and this could cause unsubstantial differences in the computations.
%To avoid confusion with the literature, which generally uses the subriemannian metric which is the double of $G_{\theta}$, we define $\Delta_b= \sum_k(X_i^2+Y_i^2)= 4\Delta_b$ and $\nabla^Hf=\sum_k(X_kfX_k+Y_kfY_k)=$

In this section we will use the standard real frame for $H(\H^n)$
$$X_k = \frac{\de}{\de x_k}+2y_k\frac{\de}{\de t}, \;\;\; Y_k= X_{n+k}=\frac{\de}{\de y_k} -2x_k\frac{\de}{\de t}.$$
With respect to this frame the subriemannian gradient is
$$\nabla^H f = \frac{1}{4}\sum_{k=1}^n(X_kfX_k + Y_kfY_k)$$
and the sublaplacian
$$\Delta_b = \frac{1}{4}\sum_{k=1}^n(X_k^2+Y_k^2).$$
In order to state the formula, we define $\Xi$ as the vector field whose flow is the semigroup $\phi_t= \delta_{\log t}$, given explicitely by 
$$\Xi = \sum_{k=1}^n\left(x_k\frac{\de}{\de x_k}+y_k\frac{\de}{\de y_k}\right) +2t\frac{\de}{\de t} = \sum_{k=1}^n\left(x_kX_k+y_kY_k\right) +2tT =$$
\begin{equation}\label{FormulaCampoXi}
 =\sum_{\alfa=1}^n\left(z_{\alfa}Z_{\alfa}+\con{z}_{\alfa}Z_{\con{\alfa}}\right) +2tT.
\end{equation}
In the following proofs we will use some easily verified properties of $\Xi$.

\begin{lemma}\label{LemmaXi}
\begin{enumerate}[topsep=2pt,itemsep=2pt,partopsep=2pt, parsep=2pt]
 \item $\div\Xi=Q$;
 \item $[X_k,\Xi]=X_k$, $[Y_k,\Xi]=Y_k$;
 \item $f$ is homogeneous of degree $\alfa$ with respect to the Heisenberg dilations if and only if $\Xi f=\alfa f$.
\end{enumerate}
\end{lemma}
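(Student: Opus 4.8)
The three statements all follow from the single fact that $\Xi$ is the infinitesimal generator of the one-parameter group of Heisenberg dilations, i.e. its flow is $\phi_s=\delta_{e^s}$ (writing $\lambda=e^s$, the flow at parameter $s$ is the dilation $\delta_\lambda$; the parametrization by $\log$ in the statement above is just this reparametrization). The plan is to read each property off the flow, so that no real computation is needed beyond bookkeeping.

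For (3), recall that $f$ being homogeneous of degree $\alfa$ means precisely $f\circ\delta_\lambda=\lambda^\alfa f$ for all $\lambda>0$, equivalently $f\circ\phi_s=e^{\alfa s}f$. Setting $g(s)=f(\phi_s(x))$, the chain rule gives $g'(s)=(\Xi f)(\phi_s(x))$. If $f$ is homogeneous, differentiating $g(s)=e^{\alfa s}f(x)$ at $s=0$ yields $\Xi f=\alfa f$; conversely, if $\Xi f=\alfa f$ then $g$ solves the linear ODE $g'=\alfa g$, hence $g(s)=e^{\alfa s}g(0)$, which is exactly homogeneity. For (1), I would invoke the fact recalled earlier that $(\delta_\lambda)^*dV=\lambda^Q dV$: substituting $\lambda=e^s$ and differentiating at $s=0$ gives $\ci{L}_\Xi dV=Q\,dV$, which by the definition $(\div\Xi)dV=\ci{L}_\Xi dV$ is the claim. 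Alternatively, one observes that in the coordinates $(x,y,t)$ the volume $dV$ is a constant multiple of Lebesgue measure—the change of frame from $\de_{x_k},\de_{y_k},\de_t$ to $X_k,Y_k,T$ being unipotent—so $\div\Xi$ equals the Euclidean divergence $\sum_k(\de_{x_k}x_k+\de_{y_k}y_k)+\de_t(2t)=2n+2=Q$.

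For (2), I would compute the brackets directly, writing $\Xi=E+2t\,\de_t$ with $E=\sum_j(x_j\de_{x_j}+y_j\de_{y_j})$ the Euler field of the $\C^n$ factor. Using $[fV,W]=f[V,W]-(Wf)V$ together with $[\de_{x_k},E]=\de_{x_k}$ and $[\de_t,2t\de_t]=2\de_t$, one finds $[\de_{x_k},\Xi]=\de_{x_k}$ and $[2y_k\de_t,\Xi]=-2y_k\de_t+4y_k\de_t=2y_k\de_t$, whence $[X_k,\Xi]=\de_{x_k}+2y_k\de_t=X_k$; the computation for $Y_k$ is identical up to signs. Conceptually, $[X_k,\Xi]=X_k$ records that $X_k$ and $Y_k$ lower homogeneity degree by one, consistently with (3): if $\Xi f=\alfa f$ then $\Xi(X_kf)=X_k\Xi f-[X_k,\Xi]f=(\alfa-1)X_kf$.

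I do not expect a genuine obstacle here, since the lemma merely collects routine identities. The only points requiring care are the normalization of the flow (the clash between the flow parameter and the coordinate $t$, resolved by $\phi_s=\delta_{e^s}$) and the sign convention $[V,W]=VW-WV$ used throughout the bracket computation in (2).
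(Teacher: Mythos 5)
Your proof is correct in all three parts: the identification of $\Xi$ as the generator of $s\mapsto\delta_{e^s}$ (which is the right reading of the paper's slightly loose ``$\phi_t=\delta_{\log t}$''), the Lie-derivative/unipotent-frame argument for $\div\Xi=Q$, the direct bracket computation for $[X_k,\Xi]=X_k$ and $[Y_k,\Xi]=Y_k$, and the ODE argument for the Euler-type characterization of homogeneity. The paper offers no proof at all --- it states these as ``easily verified properties'' --- so there is nothing to compare against; your write-up simply supplies the routine verification at exactly the level the paper presupposes.
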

Note that the third of the above properties corresponds to Euler's theorem on $\R^n$.

%We will use the notation $X_{n+k}=Y_k$, and
We will denote by $d\sigma$ the Riemannian volume form of a submanifold and by $\ni$ the Riemannian normal.

\begin{proposizione}
 If $\Omega\subset\H^n$ is a bounded domain with $C^1$ boundary and $u\in C^2(\overline{\Omega})$ verifies the equation
 \begin{equation}\label{EquazioneDimPohozaev}
  -\Delta_bu = f(x,u)
 \end{equation}
 where $f\in C^1(\H^n\times\R)$, then, defining $F(x,t)=\int_0^tf(x,s)ds$, the following formula holds:
 $$\int_{\Omega}\left(QF(x,u) -\frac{Q-2}{2}uf(x,u) +(\Xi_xF)(x,u)\right)dV =$$
 \begin{equation}\label{PohozaevGenerale}
  =\int_{\de\Omega}\left(\left(F(x,u)-\frac{1}{2}|\nabla^H u|^2\right)\Xi\cdot\ni +\Xi u\nabla^H u\cdot\ni +\frac{Q-2}{2}u\nabla^H u\cdot\ni \right)d\sigma.
 \end{equation}
\end{proposizione}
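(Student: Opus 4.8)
The plan is to compute the integral $\int_{\Omega}(-\Delta_b u)(\Xi u)\,dV$ in two different ways and equate the results, exactly as in the classical Euclidean Pohozaev argument with the dilation field $x\cdot\nabla$ replaced by the Heisenberg dilation field $\Xi$. The supplementary multiplier $u$ will be used at the end to convert the Dirichlet energy into the curvature term, which is what forces the appearance of the weight $\frac{Q-2}{2}$.

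On the one hand, using the equation \eqref{EquazioneDimPohozaev}, the integral equals $\int_{\Omega}f(x,u)\,\Xi u\,dV$. Since $\Xi\bigl(F(x,u)\bigr)=(\Xi_x F)(x,u)+f(x,u)\,\Xi u$ by the chain rule (where $\Xi_x$ differentiates only the spatial slot of $F$, evaluated at $t=u$), I would rewrite $f(x,u)\,\Xi u=\Xi\bigl(F(x,u)\bigr)-(\Xi_x F)(x,u)$ and integrate the first term by parts. Using $\div\Xi=Q$ from Lemma \ref{LemmaXi} together with the identity $\Xi G=\div(G\Xi)-QG$, the divergence theorem gives
$$\int_{\Omega}\Xi\bigl(F(x,u)\bigr)\,dV=\int_{\de\Omega}F(x,u)\,\Xi\cdot\ni\,d\sigma-Q\int_{\Omega}F(x,u)\,dV,$$
which already produces the $F$ boundary term, the $-QF$ volume term, and the extra $-\Xi_x F$ volume term.

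On the other hand, integrating by parts through $\div(g\,\nabla^H u)=g\,\Delta_b u+\nabla^H u\cdot\nabla^H g$ with $g=\Xi u$ yields
$$\int_{\Omega}(-\Delta_b u)(\Xi u)\,dV=\int_{\Omega}\nabla^H u\cdot\nabla^H(\Xi u)\,dV-\int_{\de\Omega}\Xi u\,\nabla^H u\cdot\ni\,d\sigma.$$
The crucial step is the integrand $\nabla^H u\cdot\nabla^H(\Xi u)$. Expanding $\nabla^H$ in the frame $X_k,Y_k$ and using the commutators $[X_k,\Xi]=X_k$, $[Y_k,\Xi]=Y_k$ of Lemma \ref{LemmaXi}, one gets $X_k(\Xi u)=\Xi(X_k u)+X_k u$, and hence $\nabla^H u\cdot\nabla^H(\Xi u)=\frac12\Xi\bigl(|\nabla^H u|^2\bigr)+|\nabla^H u|^2$. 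Integrating the $\Xi$-term by parts as before leaves $\frac12\int_{\de\Omega}|\nabla^H u|^2\,\Xi\cdot\ni\,d\sigma-\frac{Q-2}{2}\int_{\Omega}|\nabla^H u|^2\,dV$. Finally I would eliminate the energy by multiplying \eqref{EquazioneDimPohozaev} by $u$ and integrating by parts, which gives $\int_{\Omega}|\nabla^H u|^2\,dV=\int_{\Omega}u\,f(x,u)\,dV+\int_{\de\Omega}u\,\nabla^H u\cdot\ni\,d\sigma$; substituting this and equating the two expressions for $\int_{\Omega}(-\Delta_b u)(\Xi u)\,dV$ collects all volume terms into $\int_{\Omega}\bigl(QF-\frac{Q-2}{2}uf+\Xi_x F\bigr)\,dV$ and all boundary terms into the right-hand side of \eqref{PohozaevGenerale}.

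The step requiring the most care is the commutator computation yielding $\nabla^H u\cdot\nabla^H(\Xi u)=\frac12\Xi\bigl(|\nabla^H u|^2\bigr)+|\nabla^H u|^2$: the degree-one relations $[X_k,\Xi]=X_k$ are precisely what make the dilation weights combine so that the energy appears with the coefficient $\frac{Q-2}{2}$, the CR analogue of the Euclidean $\frac{n-2}{2}$. The nonautonomous $x$-dependence of $f$, which produces the extra $\Xi_x F$ volume term absent in the classical autonomous identity, must also be tracked carefully through the chain rule. Everything else consists of routine applications of the divergence theorem, legitimate since $u\in C^2(\overline{\Omega})$ and $\de\Omega$ is $C^1$, with $\div$ taken with respect to the Riemannian volume $dV$ as fixed in the conventions above.
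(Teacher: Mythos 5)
Your proposal is correct and follows essentially the same route as the paper's proof: multiply by $\Xi u$, evaluate the resulting integral in two ways, use $\div\Xi=Q$ and the commutators $[X_k,\Xi]=X_k$ to produce the boundary terms and the coefficient $\frac{2-Q}{2}$ on the energy, and eliminate the Dirichlet energy via the multiplier $u$. All the key identities you single out (the chain-rule decomposition $f(x,u)\,\Xi u=\Xi(F(x,u))-(\Xi_xF)(x,u)$ and $\nabla^Hu\cdot\nabla^H(\Xi u)=\frac12\Xi(|\nabla^Hu|^2)+|\nabla^Hu|^2$) are exactly the ones used in the paper.
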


\begin{proof}
 Multiplying equation \eqref{EquazioneDimPohozaev} by $\Xi u$ and integrating we get
 $$-\int_{\Omega}\Xi u\Delta_bu = \int_{\Omega} f(x,u)\Xi u = \int_{\Omega} \left[\Xi(F(x,u)) -(\Xi_x F)(x,u)\right]=$$
 \begin{equation}\label{FormulaDimPohozaev1}
  = \int_{\de\Omega}F(x,u)\Xi\cdot\ni -Q\int_{\Omega}F(x,u) -\int_{\Omega}(\Xi_x F)(x,u)
 \end{equation}
 where we used that $\div\Xi=Q$.
 Integrating by parts the right-hand side we get
 \begin{equation}\label{FormulaDimPohozaev2}
  -\int_{\Omega}\Xi u\Delta_bu = -\int_{\de\Omega}\Xi u\nabla^Hu\cdot\ni +\int_{\Omega}\nabla^Hu(\Xi u).
 \end{equation}
 Integrating by parts the last term we get, using that $[X_k,\Xi]=X_k$,
 $$\int_{\Omega}\nabla^Hu(\Xi u) = \frac{1}{4}\sum_{k=1}^{2n}\int_{\Omega}X_ku\cdot X_k(\Xi u) = \frac{1}{4}\sum_{k=1}^{2n}\int_{\Omega} X_ku\cdot (\Xi X_k+X_k)u =$$
 $$= \frac{1}{4}\sum_{k=1}^{2n}\int_{\Omega}\frac{1}{2}\Xi|X_ku|^2+ \frac{1}{4}\sum_{k=1}^{2n}\int_{\Omega}|X_ku|^2  = \frac{1}{2}\int_{\Omega}\Xi|\nabla^Hu|^2+ \int_{\Omega}|\nabla^Hu|^2 =$$
 $$= \frac{1}{2}\int_{\de\Omega}|\nabla^Hu|^2\Xi\cdot\ni -\frac{Q}{2}\int_{\Omega}|\nabla^Hu|^2 + \int_{\Omega}|\nabla^Hu|^2= $$
 \begin{equation}\label{FormulaDimPohozaev3}
  =\frac{1}{2}\int_{\de\Omega}|\nabla^Hu|^2\Xi\cdot\ni +\frac{2-Q}{2}\int_{\Omega}|\nabla^Hu|^2.
 \end{equation}
 Multiplying equation \eqref{EquazioneDimPohozaev} by $u$ and integrating we get
 \begin{equation}\label{FormulaDimPohozaev4}
  \int_{\Omega}uf(x,u) = -\int_{\Omega}u\Delta_bu = -\int_{\de\Omega}u\nabla^Hu\cdot\ni + \int_{\Omega}|\nabla^Hu|^2.
 \end{equation}
 Putting together formulas \eqref{FormulaDimPohozaev1}, \eqref{FormulaDimPohozaev2}, \eqref{FormulaDimPohozaev3} and \eqref{FormulaDimPohozaev4} we get the thesis.
\end{proof}

Now, using pseudohermitian normal coordinates, we can deduce a formula valid in small neighborhoods of any point of a pseudohermitian manifold.

\begin{proposizione}\label{PohozaevProp}
 Let $M$ be a pseudohermitian manifold, $\overline{x}\in M$, and let $u$ be a solution of the equation
 $$-b_n\Delta_bu +Ru = \widetilde{R}u^p.$$
 Then in pseudohermitian normal coordinates around $\overline{x}$ the following formula holds
 $$\int_{B_r}\left(\frac{1}{b_n}\left(\frac{Q}{p+1}-\frac{Q-2}{2}\right)\widetilde{R}u^{p+1} -\frac{1}{b_n}Ru^2+ \frac{1}{b_n}\frac{1}{p+1}\Xi(\widetilde{R})u^{p+1}+\right.$$
 $$\left.-\frac{1}{b_n}\frac{1}{2}\Xi(R)u^2 -\left(\Xi u+\frac{Q-2}{2}u\right)(\Delta_bu(x)-\cerchio{\Delta}_bu)\right)d\cerchio{V}=$$
 $$=\int_{\de B_r}\left(\left(\frac{1}{b_n}\frac{1}{p+1}\widetilde{R}u^{p+1}-\frac{1}{b_n}\frac{1}{2}Ru^2  \right)\Xi\cdot\cerchio{\ni} +\ci{B}(x,u,\cerchio{\nabla}^Hu)\right)d\cerchio{\sigma}$$
 where
 $$\ci{B}(x,u,\cerchio{\nabla}^Hu)= \frac{Q-2}{2}u(x)\cerchio{\nabla}^Hu(x)\cdot\cerchio{\ni}_{B_{d(x,0)}}(x) -\frac{1}{2}|\cerchio{\nabla}^Hu(x)|^2 \Xi(x)\cdot\cerchio{\ni}_{B_{d(x,0)}}(x) +$$
 $$+\Xi u(x)\cerchio{\nabla}^Hu(x)\cdot\cerchio{\ni}_{B_{d(x,0)}}(x)$$
 and $B_r$ denotes the ball with respect to the Korányi norm.
\end{proposizione}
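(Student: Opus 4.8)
The plan is to reduce everything to the flat Pohozaev identity \eqref{PohozaevGenerale} on $\H^n$, working in the given pseudohermitian normal coordinates and isolating the discrepancy between the true sublaplacian $\Delta_b$ and the flat Heisenberg sublaplacian $\cerchio{\Delta}_b$. First I would rewrite the equation $-b_n\Delta_b u + Ru = \widetilde{R}u^p$ as
$$-\cerchio{\Delta}_b u = f(x,u) + (\Delta_b u - \cerchio{\Delta}_b u), \qquad f(x,u) := \frac{1}{b_n}\left(\widetilde{R}(x)u^p - R(x)u\right),$$
so that on the Korányi ball $B_r$ (identified with its coordinate image) the function $u$ satisfies a flat-Heisenberg equation whose nonlinearity $f(x,u)$ depends only on $(x,u)$, plus an extra source term $\Delta_b u - \cerchio{\Delta}_b u$.

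Next I would run the same computation as in the preceding proposition, using throughout the circled (flat Heisenberg) objects $\cerchio{\Delta}_b,\cerchio{\nabla}^H,d\cerchio{V},d\cerchio{\sigma},\cerchio{\ni}$ coming from $\H^n$ via the normal coordinates: multiply the equation by $\Xi u$ and by $u$, integrate over $B_r$, and integrate by parts using Lemma \ref{LemmaXi} and $\div\Xi = Q$. Since $f(x,u)$ depends only on $(x,u)$, its contribution can be handled exactly as in \eqref{FormulaDimPohozaev1} through the chain rule $\Xi\big(F(x,u)\big) = (\Xi_x F)(x,u) + f(x,u)\,\Xi u$, with primitive
$$F(x,u) = \frac{1}{b_n}\left(\frac{1}{p+1}\widetilde{R}u^{p+1} - \frac{1}{2}Ru^2\right).$$
A direct computation then gives $QF - \frac{Q-2}{2}uf = \frac{1}{b_n}\big(\frac{Q}{p+1}-\frac{Q-2}{2}\big)\widetilde{R}u^{p+1} - \frac{1}{b_n}Ru^2$ (using $\frac{Q}{2}-\frac{Q-2}{2}=1$) together with $\Xi_x F = \frac{1}{b_n}\big(\frac{1}{p+1}\Xi(\widetilde{R})u^{p+1} - \frac{1}{2}\Xi(R)u^2\big)$; these reproduce the first four volume terms and the first boundary term of the claimed identity, while the remaining boundary contributions assemble exactly into $\ci{B}(x,u,\cerchio{\nabla}^H u)$ as in \eqref{PohozaevGenerale}.

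The step that genuinely departs from the flat case — and which I expect to be the only real point of the argument — is the treatment of the source $\Delta_b u - \cerchio{\Delta}_b u$. Because it involves second derivatives of $u$ and is not a function of $(x,u)$ alone, it cannot be rewritten as $\Xi(\cdot) - \Xi_x(\cdot)$ and absorbed into a primitive. Instead I would simply carry it along: multiplication by $\Xi u$ produces $\int_{B_r}(\Delta_b u - \cerchio{\Delta}_b u)\,\Xi u\, d\cerchio{V}$ and multiplication by $u$ produces $\int_{B_r}(\Delta_b u - \cerchio{\Delta}_b u)\,u\, d\cerchio{V}$; forming the same combination as in the flat case (substituting $\int_{B_r}|\cerchio{\nabla}^H u|^2$ from the second identity into the first) these two contributions collapse into the single volume term $-\int_{B_r}\big(\Xi u + \frac{Q-2}{2}u\big)(\Delta_b u - \cerchio{\Delta}_b u)\, d\cerchio{V}$ of the statement, and since no integration by parts is applied to this term it generates no new boundary contribution. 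Collecting the four pieces yields the asserted identity; the only care required is the bookkeeping of signs and the consistent use of the circled objects, since the normal-coordinate map matches the manifold's geometry with $\H^n$ only approximately and every integration by parts must be performed with respect to the flat structure.
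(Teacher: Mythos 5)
Your proposal is correct, and it follows the same overall strategy as the paper: work in pseudohermitian normal coordinates, view $u$ as a solution of a flat-Heisenberg equation with the operator discrepancy $\Delta_bu-\cerchio{\Delta}_bu$ as a correction, and invoke the flat Pohozaev identity \eqref{PohozaevGenerale}. Where you differ is in the decomposition. The paper forces everything into the literal statement of \eqref{PohozaevGenerale} by taking
$f(x,u)=-\frac{1}{b_n}R(x)u(x)+\frac{1}{b_n}\widetilde{R}(x)u^p+(\Delta_bu(x)-\cerchio{\Delta}_bu(x))$,
i.e.\ freezing $u(x)$ in the linear term and in the discrepancy so that both are treated as $x$-dependent coefficients with primitive linear in $u$; this produces the correct identity but at the cost of two post-processing steps: the manipulation of $\int_{B_r}\Xi(Ru)u$ (to trade it for $\frac12\Xi(R)u^2$ plus a divergence) and an application of the divergence theorem to push the spurious boundary term $(\Delta_bu-\cerchio{\Delta}_bu)u\,\Xi\cdot\cerchio{\ni}$ back into the volume integral. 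You instead keep $-\frac{1}{b_n}Ru$ as a genuine $u$-dependence in $f$ (so $F$ contains $-\frac{1}{2b_n}Ru^2$ directly) and carry the discrepancy as an external source that is never fed into a primitive; as your own bookkeeping shows, the combination $-\int_{B_r}\bigl(\Xi u+\frac{Q-2}{2}u\bigr)(\Delta_bu-\cerchio{\Delta}_bu)$ then appears immediately and no extra boundary term is generated. Both routes land on the same formula; yours is the more transparent derivation, the paper's has the minor advantage of reusing the previously stated proposition verbatim rather than re-running its proof with a source term. I verified your sign conventions: with $g=\Delta_bu-\cerchio{\Delta}_bu$ the two identities obtained by testing against $\Xi u$ and $u$ combine exactly as you describe, and $QF-\frac{Q-2}{2}uf$ and $\Xi_xF$ reproduce the four interior curvature terms and the first boundary term of the statement.
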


\begin{proof}
 $u$ verifies
 $$-\cerchio{\Delta}_bu = -\frac{1}{b_n}Ru + \frac{1}{b_n}\widetilde{R}u^p + (\Delta_bu-\cerchio{\Delta}_bu).$$
 Defining
 $$f(x,u)= -\frac{1}{b_n}R(x)u(x) + \frac{1}{b_n}\widetilde{R}(x)u^p + (\Delta_bu(x)-\cerchio{\Delta}_bu(x))$$
 we have
 $$F(x,u)= -\frac{1}{b_n}R(x)u(x)u + \frac{1}{b_n}\frac{1}{p+1}\widetilde{R}(x)u^{p+1} + (\Delta_bu(x)-\cerchio{\Delta}_bu(x))u,$$
 and thus, applying formula \eqref{PohozaevGenerale}, we get
 $$\int_{B_r}\left(-\frac{Q}{b_n}Ru^2 + \frac{Q}{b_n}\frac{1}{p+1}\widetilde{R}u^{p+1} + Q(\Delta_bu-\cerchio{\Delta}_bu)u +\right.$$
 $$+\frac{Q-2}{2}\frac{1}{b_n}Ru^2 -\frac{Q-2}{2}\frac{1}{b_n}\widetilde{R}u^{p+1} -\frac{Q-2}{2}(\Delta_bu(x)-\cerchio{\Delta}_bu)u +$$
 $$\left.-\frac{1}{b_n}\Xi(Ru)u + \frac{1}{b_n}\frac{1}{p+1}\Xi(\widetilde{R})u^{p+1} + u\Xi(\Delta_bu-\cerchio{\Delta}_bu)\right)=$$
 $$=\int_{\de B_r}\left(\left(-\frac{1}{b_n}Ru^2 + \frac{1}{b_n}\frac{1}{p+1}\widetilde{R}u^{p+1} + (\Delta_bu-\cerchio{\Delta}_bu)u -\frac{1}{2}|\nabla^Hu|^2\right)\Xi\cdot\cerchio{\ni}  +\right.$$
 $$\left.+\Xi u\nabla^Hu\cdot\cerchio{\ni} +\frac{Q-2}{2}u\nabla^Hu\cdot\cerchio{\ni} \right)=$$
 $$=\int_{B_r}\left(u\Xi(\Delta_bu-\cerchio{\Delta}_bu) + (\Delta_bu-\cerchio{\Delta}_bu)\Xi u +Q(\Delta_bu-\cerchio{\Delta}_bu)u\right)+$$
 $$+\int_{\de B_r}\left(\left(-\frac{1}{b_n}Ru^2 + \frac{1}{b_n}\frac{1}{p+1}\widetilde{R}u^{p+1} -\frac{1}{2}|\nabla^Hu|^2\right)\Xi\cdot\cerchio{\ni}  +\right.$$
 \begin{equation}\label{FormulaDimPohozaev5}
  \left.+\Xi u\nabla^Hu\cdot\cerchio{\ni} +\frac{Q-2}{2}u\nabla^Hu\cdot\cerchio{\ni} \right).
 \end{equation}
 With some computations we can get
 $$-\frac{1}{b_n}\int_{B_r}\Xi(Ru)u = -\frac{1}{b_n}\int_{B_r}\left(\Xi(R)u^2 + \frac{1}{2}R\Xi(u^2)\right)=$$
 $$=-\frac{1}{b_n}\int_{B_r}\Xi(R)u^2 - \frac{1}{b_n}\frac{1}{2}\int_{\de B_r}Ru^2\Xi\cdot\cerchio{\ni} +\frac{1}{b_n}\frac{1}{2}\int_{B_r}\Xi(R)u^2 +\frac{1}{b_n}\frac{Q}{2}\int_{B_r}Ru^2=$$
 $$=-\frac{1}{b_n}\frac{1}{2}\int_{B_r}\Xi(R)u^2 - \frac{1}{b_n}\frac{1}{2}\int_{\de B_r}Ru^2\Xi\cdot\cerchio{\ni} +\frac{1}{b_n}\frac{Q}{2}\int_{B_r}Ru^2.$$
 Thus, substituting in formula \eqref{FormulaDimPohozaev5}, we get
 $$\int_{B_r}\left(-\frac{1}{b_n}\frac{Q+2}{2}Ru^2 + \frac{1}{b_n}\left(\frac{Q}{p+1}-\frac{Q-2}{2}\right)\widetilde{R}u^{p+1} +\right.$$
 $$\left.-\left(\Xi u+\frac{Q-2}{2}u\right)(\Delta_bu(x)-\cerchio{\Delta}_bu)+ \frac{1}{b_n}\frac{1}{p+1}\Xi(\widetilde{R})u^{p+1}\right)+$$
 $$-\frac{1}{b_n}\frac{1}{2}\int_{B_r}\Xi(R)u^2 - \frac{1}{b_n}\frac{1}{2}\int_{\de B_r}Ru^2\Xi\cdot\cerchio{\ni} +\frac{1}{b_n}\frac{Q}{2}\int_{B_r}Ru^2=$$
 $$=\int_{\de B_r}\left(\left(-\frac{1}{b_n}Ru^2 + \frac{1}{b_n}\frac{1}{p+1}\widetilde{R}u^{p+1} -\frac{1}{2}|\nabla^Hu|^2\right)\Xi\cdot\cerchio{\ni}  +\right.$$
 $$\left.+\Xi u\nabla^Hu\cdot\cerchio{\ni} +\frac{Q-2}{2}u\nabla^Hu\cdot\cerchio{\ni} \right)$$
 that is
 $$\int_{B_r}\left(\frac{1}{b_n}\left(\frac{Q}{p+1}-\frac{Q-2}{2}\right)\widetilde{R}u^{p+1} -\frac{1}{b_n}Ru^2+\right.$$
 $$\left.-\left(\Xi u+\frac{Q-2}{2}u\right)(\Delta_bu(x)-\cerchio{\Delta}_bu)+ \frac{1}{b_n}\frac{1}{p+1}\Xi(\widetilde{R})u^{p+1} -\frac{1}{b_n}\frac{1}{2}\Xi(R)u^2\right)=$$
 $$=\int_{\de B_r}\left(\left(-\frac{1}{b_n}\frac{1}{2}Ru^2 + \frac{1}{b_n}\frac{1}{p+1}\widetilde{R}u^{p+1} -\frac{1}{2}|\nabla^Hu|^2\right)\Xi\cdot\cerchio{\ni}  +\right.$$
 $$\left.+\Xi u\nabla^Hu\cdot\cerchio{\ni} +\frac{Q-2}{2}u\nabla^Hu\cdot\cerchio{\ni} \right)$$
 which is the thesis.
\end{proof}

In the following we will need an estimate of the term involving $\ci{B}$.

\begin{proposizione}\label{LimiteB}
 There exists a constant $c_n>0$ such that if $u$ is a function defined on a neighborhood of $0$ in $\H^n$ such that $u(x)= \frac{\gamma_n}{|x|^{Q-2}} + A + \alfa(x)$ with $\alfa(x)=o(1)$ for $x\to 0$, where $\gamma_n$ is the constant from formula \eqref{SoluzioneFondamentale}, then
 $$\lim_{\sigma\to 0}\sigma^{Q-1}\int_{\de B_1(0)}\ci{B}(x,u\circ\delta_{\sigma},\nabla^H (u\circ\delta_{\sigma})) = -c_nA.$$
\end{proposizione}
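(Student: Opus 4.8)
The plan is to use that $\ci{B}$ is a quadratic form in the $1$-jet $(u,\nabla^Hu,\Xi u)$ and to split $u$ along its expansion at $0$. Writing $G(x)=\gamma_n|x|^{-(Q-2)}$ for the Folland--Stein fundamental solution of \eqref{SoluzioneFondamentale}, we have $u=G+A+\alfa$. Since $G$ is homogeneous of degree $-(Q-2)$, on the fixed sphere $\de B_1(0)$ one has the exact pointwise scaling $G\circ\delta_\sigma=\sigma^{-(Q-2)}G$, and the elementary rules $X_k(f\circ\delta_\sigma)=\sigma\,(X_kf)\circ\delta_\sigma$, $\Xi(f\circ\delta_\sigma)=(\Xi f)\circ\delta_\sigma$ control every derivative of the rescaled function. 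Expanding $\ci{B}(u\circ\delta_\sigma)$ by bilinearity yields a pure $G$-term, the $A$-linear cross terms, the pure $A$-term, and the terms containing $\alfa$; the prefactor in the statement is exactly the power of $\sigma$ that renders the $A$-linear cross term finite and nonzero while killing the others, and it is this term that produces $-c_nA$.

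First I would dispose of the pure $G$-term. As $\ci{B}$ is exactly quadratic, $\ci{B}(G\circ\delta_\sigma)=\sigma^{-2(Q-2)}\ci{B}(G)$ pointwise on $\de B_1(0)$, so its contribution is a fixed multiple of $\int_{\de B_1}\ci{B}(G)$, which I claim vanishes. Note that $\ci{B}(x,u,\nabla^Hu)$ is precisely the boundary integrand of \eqref{PohozaevGenerale} when $F\equiv 0$. Applying \eqref{PohozaevGenerale} to $u=G$ on an annulus $B_R\setminus\overline{B_\e}$, where $-\Delta_bG=0$ so that $f\equiv F\equiv 0$ and the bulk vanishes, shows that $\Psi(\rho):=\int_{\de B_\rho}\ci{B}(G)$ is independent of $\rho$. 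Letting $\rho\to\infty$ and using $G=O(\rho^{-(Q-2)})$, $|\nabla^HG|=O(\rho^{-(Q-1)})$ gives $\Psi(\rho)\to 0$, hence $\Psi\equiv 0$ and in particular $\int_{\de B_1}\ci{B}(G)=0$. The pure $A$-term vanishes trivially, since $\nabla^HA=\Xi A=0$ and $\ci{B}$ annihilates constants.

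The substantive computation is the $A$-linear part. Among the three summands of $\ci{B}$ the constant $A$ can enter undifferentiated only through the first, the other two involving only $\nabla^H$ and $\Xi$; hence the $A$-linear part equals $\tfrac{Q-2}{2}A\,\nabla^H(G\circ\delta_\sigma+\alfa\circ\delta_\sigma)\cdot\ni$, whose leading piece is $\tfrac{Q-2}{2}A\,\sigma^{-(Q-2)}\nabla^HG\cdot\ni$. After multiplying by the calibrating power of $\sigma$ and integrating over $\de B_1(0)$, this yields $\tfrac{Q-2}{2}A\int_{\de B_1}\nabla^HG\cdot\ni$. By the divergence theorem together with \eqref{SoluzioneFondamentale}, $\int_{\de B_1}\nabla^HG\cdot\ni=\int_{B_1}\Delta_bG\,dV=-1$, so the limit is $-\tfrac{Q-2}{2}A$, giving $c_n=\tfrac{Q-2}{2}=n>0$.

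It remains to show that all $\alfa$-contributions vanish in the limit, and this is where the only real difficulty lies. The terms in which $\alfa$ appears undifferentiated are multiplied by the singular factor $\nabla^H(G\circ\delta_\sigma)$ of size $\sigma^{-(Q-2)}$, but since $\alfa\circ\delta_\sigma\to 0$ uniformly on the compact sphere $\de B_1(0)$ they are $o(1)$ after the prefactor; this is exactly the point at which the hypothesis $\alfa=o(1)$ is used. The dangerous terms are those carrying $\nabla^H\alfa$ or $\Xi\alfa$ (including the $\alfa$-part of the $A$-linear term above), since by the scaling rules these equal $\sigma\,(\nabla^H\alfa)\circ\delta_\sigma$ and $(\Xi\alfa)\circ\delta_\sigma$ and are controlled only if the first derivatives of $\alfa$ are bounded near $0$; in the intended application this follows from interior subelliptic estimates (Theorem \ref{TeoremaRegolarita}) applied to the equation solved by $u$, making these terms $O(\sigma)$. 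To obtain the statement from $\alfa=o(1)$ alone one would instead re-apply the Pohozaev identity on the annulus to transfer the derivatives off $\alfa$ onto the smooth homogeneous factor $G$. The careful handling of these error terms, and the bookkeeping of the a priori divergent homogeneities, is the main obstacle; the algebraic isolation of the cross term and the flux computation are routine.
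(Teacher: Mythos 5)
Your strategy is the one the paper itself follows: split $u=G+A+\alfa$ with $G=\gamma_n|x|^{-(Q-2)}$, kill the pure $G$-part by a flux-constancy argument on annuli (your route through \eqref{PohozaevGenerale} with $f\equiv 0$ is the same computation as the paper's divergence-theorem argument for the field $(Q-2)G\nabla^HG+|\nabla^HG|^2\Xi$), and extract the answer from the $A$-linear cross term via the flux of the fundamental solution; your value $\int_{\de B_1}\nabla^HG\cdot\ni=-1$, hence $c_n=\frac{Q-2}{2}$, agrees with the paper's conclusion, and your observation that the hypothesis $\alfa=o(1)$ alone cannot control the terms containing $\nabla^H\alfa$ and $\Xi\alfa$ is correct (the paper discards them silently).

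There is, however, a genuine gap at the single step you never actually compute, and it is the crux. By your own (correct) scaling rules, $G\circ\delta_{\sigma}=\sigma^{-(Q-2)}G$ exactly, hence $\nabla^H(G\circ\delta_{\sigma})=\sigma^{-(Q-2)}\nabla^HG$ and $\Xi(G\circ\delta_{\sigma})=-(Q-2)\sigma^{-(Q-2)}G$. The $A$-linear cross term is therefore $\frac{Q-2}{2}A\,\sigma^{-(Q-2)}\nabla^HG\cdot\ni$ plus $\alfa$-errors, so after multiplication by the \emph{stated} prefactor $\sigma^{Q-1}$ it is $O(\sigma)$ and vanishes, while the pure $G$-part contributes $\sigma^{3-Q}\int_{\de B_1}\ci{B}(G)=\sigma^{3-Q}\cdot 0=0$. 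Run to the end, your argument proves that the stated limit equals $0$, not $-c_nA$: the power that calibrates the cross term is $\sigma^{Q-2}$, not $\sigma^{Q-1}$, and the phrase ``the calibrating power of $\sigma$'' hides this contradiction. Indeed, pulling back the flux forms with $(\delta_{\sigma})^*dV=\sigma^Q\,dV$ and $(X_ku)\circ\delta_{\sigma}=\sigma^{-1}X_k(u\circ\delta_{\sigma})$, and using the dilation invariance of $\Xi$, one gets the exact identity
\begin{equation*}
 \int_{\de B_{\sigma}}\ci{B}(y,u,\nabla^Hu)\,d\sigma \;=\; \sigma^{Q-2}\int_{\de B_1}\ci{B}(x,u\circ\delta_{\sigma},\nabla^H(u\circ\delta_{\sigma}))\,d\sigma ,
\end{equation*}
whose left-hand side is the natural mass flux with limit $-\frac{Q-2}{2}A$; this pins the correct prefactor as $\sigma^{Q-2}$. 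You have in fact reproduced, in a form where it becomes an outright inconsistency, a defect of the paper's own proof, which writes $\nabla^H(u\circ\delta_{\sigma})\approx\sigma^{-(Q-1)}\nabla^HG$ in the first and third terms of $\ci{B}$ but $\sigma^{-(Q-2)}\nabla^HG$ in the second; these cannot both hold, and the first is incompatible with $G\circ\delta_{\sigma}=\sigma^{-(Q-2)}G$. A complete write-up must therefore either prove the proposition with $\sigma^{Q-2}$ in place of $\sigma^{Q-1}$ (and track that change through its use in Lemma \ref{SegnoTermineOrdineZero}, where the other error terms are estimated against the prefactor $\sigma^3M_i^2$), or exhibit where an extra power of $\sigma$ could come from; as written, your proposal asserts precisely the step that fails.
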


\begin{proof}
 $$\sigma^{Q-1}\int_{\de B_1}\ci{B}(x,u\circ\delta_{\sigma},\nabla^H (u\circ\delta_{\sigma})) = \sigma^{Q-1}\int_{\de B_1}\left(\frac{Q-2}{2}u\circ\delta_{\sigma}\nabla^H (u\circ\delta_{\sigma})\cdot\ni+\right.$$
 $$\left.-\frac{1}{2}|\nabla^H (u\circ\delta_{\sigma})|^2 \Xi\cdot\ni +\Xi(u\circ\delta_{\sigma})\nabla^H(u\circ\delta_{\sigma})\cdot\ni\right)=$$
 $$= \sigma^{Q-1}\int_{\de B_1}\left[\frac{Q-2}{2}\left(\frac{\gamma_n}{\sigma^{Q-2}|x|^{Q-2}} + A\right)\cdot \left(\frac{1}{\sigma^{Q-1}}\nabla^H\frac{\gamma_n}{|x|^{Q-2}}\right)\cdot\ni+ \right.$$
 $$-\frac{1}{2}\left|\nabla^H\left(\frac{\gamma_n}{\sigma^{Q-2}|x|^{Q-2}}\right)\right|^2\Xi\cdot\ni+$$
 $$\left.+\left(-(Q-2)\frac{\gamma_n}{\sigma^{Q-2}|x|^{Q-2}}\right)\left(\frac{1}{\sigma^{Q-1}}\nabla^H\frac{\gamma_n}{|x|^{Q-2}}\right)\cdot\ni\right] +o(1)=$$
 $$= \frac{Q-2}{2}A\int_{\de B_1}\nabla^H\left(\frac{\gamma_n}{|x|^{Q-2}}\right)\cdot\ni + $$
 $$-\frac{1}{\sigma^{Q-2}}\left[\int_{\de B_1}\frac{Q-2}{2}\frac{\gamma_n}{|x|^{Q-2}}\nabla^H\left(\frac{\gamma_n}{|x|^{Q-2}}\right)\cdot\ni -\frac{1}{2}\int_{\de B_1}\left|\nabla^H\left(\frac{\gamma_n}{|x|^{Q-2}}\right)\right|^2\Xi\cdot\ni\right] +o(1),$$
 where we used that $\Xi(|x|^{-Q+2})=(-Q+2)|x|^{-Q+2}$, which is a consequence of point 3 of Lemma \ref{LemmaXi}.
 Applying the divergence theorem we get, for $r>1$,
 $$\left(\int_{\de B_r}-\int_{\de B_1}\right)\left((Q-2)\frac{\gamma_n}{|x|^{Q-2}}\nabla^H\left(\frac{\gamma_n}{|x|^{Q-2}}\right)\cdot\ni +\left|\nabla^H\left(\frac{\gamma_n}{|x|^{Q-2}}\right)\right|^2\Xi\cdot\ni\right)= $$
 $$= \int_{B_r\setminus B_1}\left( (Q-2)\left|\nabla^H\left(\frac{\gamma_n}{|x|^{Q-2}}\right)\right|^2 +(Q-2)\frac{\gamma_n}{|x|^{Q-2}}\Delta_b\left(\frac{\gamma_n}{|x|^{Q-2}}\right) +\right.$$
 $$\left. +Q\left|\nabla^H\left(\frac{\gamma_n}{|x|^{Q-2}}\right)\right|^2 + \Xi\left|\nabla^H\left(\frac{\gamma_n}{|x|^{Q-2}}\right)\right|^2\right)=$$
 $$= \int_{B_r\setminus B_1}\left( (2Q-2)\left|\nabla^H\left(\frac{\gamma_n}{|x|^{Q-2}}\right)\right|^2 + (-2(Q-1))\left|\nabla^H\left(\frac{\gamma_n}{|x|^{Q-2}}\right)\right|^2\right)= 0.$$
 where we used Lemma \ref{LemmaXi} and the fact that $\left|\nabla^H\left(\frac{\gamma_n}{|x|^{Q-2}}\right)\right|^2$ is homogeneous of degree $-2(Q-1)$.
 
 Since, if $r\to\infty$,
 $$\int_{\de B_r}\left((Q-2)\frac{\gamma_n}{|x|^{Q-2}}\nabla^H\left(\frac{\gamma_n}{|x|^{Q-2}}\right)\cdot\ni +\left|\nabla^H\left(\frac{\gamma_n}{|x|^{Q-2}}\right)\right|^2\Xi\cdot\ni\right)\to 0 $$
 then
 $$\int_{\de B_1}\left((Q-2)\frac{\gamma_n}{|x|^{Q-2}}\nabla^H\left(\frac{\gamma_n}{|x|^{Q-2}}\right)\cdot\ni +\left|\nabla^H\left(\frac{\gamma_n}{|x|^{Q-2}}\right)\right|^2\Xi\cdot\ni\right)= 0 .$$
 Therefore
 $$\sigma^{Q-1}\int_{\de B_1}\ci{B}(x,u\circ\delta_{\sigma},\nabla^H (u\circ\delta_{\sigma})) = \frac{Q-2}{2}A\int_{\de B_1}\nabla^H\left(\frac{\gamma_n}{|x|^{Q-2}}\right)\cdot\ni +o(1).$$
 Using that $\ni = \frac{\nabla_{g_{\theta}}|x|}{|\nabla_{g_{\theta}}|x|}$ and that for every $f$, $g$ one has $\nabla_{g_{\theta}}f\cdot\nabla^Hg= \nabla^Hf\cdot\nabla^Hg$ as an easy consequence of the definitions, it is easy to prove that
 $$c_n = -\frac{Q-2}{2}\int_{\de B_1}\nabla^H\left(\frac{\gamma_n}{|x|^{Q-2}}\right)\cdot\ni$$
 is strictly positive, and so the thesis is proved.
\end{proof}

\section{Blow-up analysis}
In this section we carry out the blow-up analysis necessary in order to prove Theorem \ref{IlTeorema}.
This analysis follows closely the analogous analysis for Riemannian manifolds, which has be performed in various works: we refer, for example, to \cite{L}, \cite{LZhu} and \cite{M}.

We start by introducing in the context of CR manifolds some definitions due to Schoen
%\rosso{(INSERIRE CITAZIONE?)}
about blow-up points in conformal geometry which proved themselves fundamental in the study of nonlinear equations.

Let $M$ be a three-dimensional CR manifold endowed with a pseudohermitian structure $\theta$, $p_i\to 3$ a sequence with $1<p_i\le 3$ for every $i$, and $u_i\in C^2(M)$ a sequence of positive solutions of
\begin{equation}\label{EquazioneCapitolo4}
 L_{\theta}u_i = \widetilde{R}u_i^{p_i},
\end{equation}
where $\widetilde{R}$ is a positive function of class $C^1$.

\begin{definizione}
 A point $\overline{x}\in M$ is called a blow-up point if there exist a sequence $x_i\to\overline{x}$ such that $M_i=u_i(x_i)\to\infty$.
\end{definizione}

\begin{definizione}
 A point $\overline{x}\in M$ is called an isolated blow-up point if there exist $\overline{r}>0$, a constant $C$, and a sequence $x_i\to\overline{x}$ such that $x_i$ is a local maximum of $u_i$, $u_i(x_i)\to\infty$, and
 $$u_i(x)\le Cd(x,x_i)^{-\frac{2}{p_i-1}}$$
 for every $x\in B_{\overline{r}}(x_i)$.
\end{definizione}

Given an isolated blow-up point $\overline{x}$, define
$$\overline{u}_i(r) = \int_{\de B_1(x_i)}u_i\circ\delta_rd\cerchio{\sigma}$$
in pseudohermitian normal coordinates, and $\overline{w}_i(r)= r^{\frac{2}{p_i-1}}\overline{u}_i(r)$.

\begin{definizione}\label{DefinizionePuntoIsolatoSemplice}
 An isolated blow-up point $\overline{x}$ is called an isolated simple blow-up point $\overline{x}$ if there exists $\rho\in(0,\overline{r})$ (independent of $i$) such that $\overline{w}_i$ has exactly one critical point in $(0,\rho)$.
\end{definizione}

Notice that taking the definition from conformal geometry in a straightforward manner would not be appropriate because in $\H^1$, unlike in $\R^n$, the family of surface measures on $\de B_r$ does not have homogeneity properties, so we have to take this slight variation suited to the blow-up procedure, which relies on dilations in a fundamental manner.

\begin{lemma}\label{Harnack}
 If $\overline{x}$ is an isolated blow-up point then there exists $C$ such that for $0<r<\frac{\overline{r}}{3}$ it holds that
 $$\max_{B_{2r}\setminus B_{r/2}}u_i \le C \min_{B_{2r}\setminus B_{r/2}}u_i.$$
\end{lemma}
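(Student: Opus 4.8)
The plan is to reduce the statement to the Harnack inequality for operators of the form $-\Delta_b + f$ with $\N{f}_{L^\infty}$ under control, recalled at the end of Section \ref{Preliminari}, after rescaling the annulus to unit size. Writing \eqref{EquazioneCapitolo4} as $-4\Delta_b u_i + Ru_i = \widetilde R u_i^{p_i}$, we see that each $u_i$ is a positive solution of
$$-\Delta_b u_i + V_i u_i = 0, \qquad V_i = \tfrac14 R - \tfrac14\widetilde R\,u_i^{p_i-1}.$$
The obstruction is that $V_i$ is not bounded uniformly in $i$, since $u_i^{p_i-1}$ blows up. However, the isolated blow-up estimate $u_i(x)\le C d(x,x_i)^{-\frac{2}{p_i-1}}$ shows that on the annulus $B_{2r}(x_i)\setminus B_{r/2}(x_i)$, where $d(\cdot,x_i)\approx r$, one has $u_i^{p_i-1}\le C^{p_i-1}d(\cdot,x_i)^{-2}=O(r^{-2})$, hence $V_i=O(r^{-2})$; this is exactly the scaling compensated by a Heisenberg dilation.

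First I would pass to pseudohermitian normal coordinates centred at $x_i$ and rescale as in Remark \ref{OsservazioneRiscalamento}, taking $\lambda=r$ and $x=x_i$, so that $v_i = r^{\frac{2}{p_i-1}} u_i\circ\delta_r\circ L_{x_i}$ is a positive solution of the rescaled equation $L_{\widetilde\theta} v_i = \widetilde R_i\,v_i^{p_i}$, with $\widetilde R_i=\widetilde R\circ\delta_r\circ L_{x_i}$. The hypothesis $r<\overline r/3$ guarantees $B_{3r}(x_i)\subseteq B_{\overline r}(x_i)$, so that the equation and the growth estimate survive the rescaling on the whole fixed annulus $B_3\setminus B_{1/4}$. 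By \eqref{ConvergenzaStrutture} the rescaled pseudohermitian structures converge smoothly on compact sets to $(\H^1,\cerchio{\theta},\cerchio{\ci{H}})$ as $r\to 0$; in particular the rescaled Webster curvature and $\widetilde R_i$ stay bounded there.

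Next I would use the isolated blow-up bound to control $v_i$ itself. Since the Korányi norm is exactly homogeneous under $\delta_r$ (and the Korányi and Carnot--Carathéodory distances are equivalent on balls), the estimate for $u_i$ transforms into $v_i(y)\le C|y|^{-\frac{2}{p_i-1}}$, which is bounded on $\overline{B_3\setminus B_{1/4}}$ because $\frac{2}{p_i-1}$ is bounded (recall $p_i\to 3$). Consequently the rescaled potential $V_i^{\widetilde\theta}=\tfrac14 R^{\widetilde\theta}-\tfrac14\widetilde R_i\,v_i^{p_i-1}$, where $R^{\widetilde\theta}$ is the Webster curvature of the rescaled structure, is bounded in $L^\infty(B_3\setminus B_{1/4})$ uniformly in $i$, and $v_i$ solves $-\Delta_b^{\widetilde\theta} v_i + V_i^{\widetilde\theta}v_i=0$ there. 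I would then cover the compact connected annulus $\overline{B_2\setminus B_{1/2}}$ by a fixed finite number of balls compactly contained in $B_3\setminus B_{1/4}$ and chain the interior Harnack inequality for $-\Delta_b^{\widetilde\theta}+V_i^{\widetilde\theta}$, obtaining $\max_{\overline{B_2\setminus B_{1/2}}} v_i \le C\min_{\overline{B_2\setminus B_{1/2}}} v_i$. Undoing the rescaling, the factor $r^{\frac{2}{p_i-1}}$ and the diffeomorphism cancel in the quotient $\max/\min$, which yields the claimed inequality for $u_i$ on $B_{2r}(x_i)\setminus B_{r/2}(x_i)$.

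The main obstacle is to verify that the Harnack constant is genuinely uniform in both $i$ and $r$. This rests on two facts available from Section \ref{Preliminari}: the Harnack inequality for $-\Delta_b+f$ holds with a constant depending only on $\N{f}_{L^\infty}$ and on the pseudohermitian data, and these data converge smoothly on compact sets to those of $\H^1$ via \eqref{ConvergenzaStrutture}. Combined with the uniform $L^\infty$ bound on $V_i^{\widetilde\theta}$ established above and the fact that the covering of the annulus can be chosen with a fixed number of balls, this gives a single constant $C$ valid for all $i$ and all $0<r<\overline r/3$.
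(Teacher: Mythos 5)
Your proposal is correct and follows essentially the same route as the paper: rescale by $\delta_r$ in pseudohermitian normal coordinates so that the isolated blow-up bound makes the rescaled solution (hence the zeroth-order coefficient) uniformly bounded on a fixed annulus, then apply the Harnack inequality for $-\Delta_b+f$ with a constant that is uniform thanks to the smooth convergence of the rescaled structures in \eqref{ConvergenzaStrutture}. Your write-up merely makes explicit some steps the paper leaves implicit (the form of the potential, the chaining over a finite cover, the cancellation of the rescaling factor in the max/min quotient).
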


\begin{proof}
% Using pseudohermitian normal coordinates we can suppose that $M$ is an open subset $\Omega$ of $\H^1$, $\overline{x}=0$, and that the CR structure $\ci{H}$ and contact form $\theta$ coincide at first order around $0$ with the standard CR structure of $\H^1$.
 Let $w_i=r^{\frac{2}{p_i-1}}u_i\circ\delta_r$. If $\theta_r=\frac{1}{r^2}(\delta_r)^*\theta$, using the notation of Remark \ref{OsservazioneRiscalamento}, $w_i$ verifies
 $$L_{\theta_r}w_i = (\widetilde{R}\circ\delta_r)w_i^{p_i}$$
 on $\delta_{\frac{1}{r}}B_{\overline{r}}$, and the definition of isolated blow-up and the properties of pseudohermitian normal coordinates imply that $w_i(x)\le\frac{C}{|x|^{\frac{2}{p_i-1}}}$.
 So $w_i$ is uniformly bounded outside some neighborhood of $0$ with bound independent by $r$, and the thesis follows the Harnack inequality, which we can apply with constant independent by $L_{\theta_r}$ thanks to formula \eqref{ConvergenzaStrutture}.
\end{proof}

In the following, given an isolated blow-up point $x_i\to\overline{x}$, in order to study the blowing up sequence of functions we do a rescaling defining $M_i=u_i(x_i)$ and
$$v_i=\frac{1}{M_i}u_i\circ\delta_{M_i^{-\frac{p_i-1}{2}}}\circ L_{x_i}$$
defined on $B_{\overline{r}M_i^{\frac{p_i-1}{2}}}(\overline{x}$).
\footnote{By our notation this denotes the ball in the Korányi distance, while the definition of simple blow-up involves the Carnot-Carathéodory distance, but since the two are equivalent, up to changing $\overline{r}$ there is no substantial difference in using one or the other.}
Following the notation of Remark \ref{OsservazioneRiscalamento}, $v_i$ verifies
$$L_{\theta_i}v_i = (\widetilde{R}\circ\delta_{M_i^{-\frac{p_i-1}{2}}})v_i^{p_i}$$
where $L_{\theta_i}$ is the sublaplacian with respect to the rescaled contact form $\theta_i= M_i^{-(p_i-1)}\left(\delta_{M_i^{-\frac{p_i-1}{2}}}\circ L_{x_i}\right)^*\theta$ on the rescaled CR structure.

In the following all covariant derivatives applied to $v_i$ are meant with respect to this rescaled pseudohermitian structure.

\begin{proposizione}\label{BlowUpIsolatiBolle}
 If $\overline{x}$ is an isolated blow-up point then for any $R_i\to\infty$, $\e_i\to 0$ and $k\in\mathbf{N}$, up to subsequences, in pseudohermitian normal coordinates around $\overline{x}$ it holds that
 $$\N{\frac{1}{M_i}u_i\left(\delta_{M_i^{-\frac{p_i-1}{2}}}(x_i^{-1} \cdot x)\right) -(U\circ\delta_{\widetilde{R}(0)^{1/2}})(x)}_{\Gamma^{k,\alfa}(B_{R_i}(0))}\le\e_i,$$
 where $U$ is defined in Theorem \ref{Classificazione}, $M_i=u_i(x_i)$, and
 $$\frac{R_i}{\log M_i} \to 0.$$
\end{proposizione}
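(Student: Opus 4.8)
The plan is to show that the rescaled functions $v_i$ converge, in $\Gamma^{k,\alfa}_{loc}(\H^1)$ for every $k$, to the standard Jerison--Lee bubble, and then to upgrade this local convergence to convergence on the growing balls $B_{R_i}$ by a diagonal argument. The decisive input will be the classification Theorem \ref{Classificazione}.

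First I would collect the properties of $v_i=\frac{1}{M_i}u_i\circ\delta_{M_i^{-(p_i-1)/2}}\circ L_{x_i}$. By Remark \ref{OsservazioneRiscalamento} it solves $L_{\theta_i}v_i=(\widetilde{R}\circ\delta_{M_i^{-(p_i-1)/2}})v_i^{p_i}$, where by \eqref{ConvergenzaStrutture} the rescaled structures converge smoothly on compact sets to $(\H^1,\cerchio{\theta},\cerchio{\ci{H}})$, so that $L_{\theta_i}\to-4\cerchio{\Delta}_b$. The normalization gives $v_i(0)=1$, and since $x_i$ is a local maximum of $u_i$, the origin is a local maximum of $v_i$, so $v_i\le 1$ near $0$. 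The isolated blow-up estimate $u_i\le Cd(\cdot,x_i)^{-2/(p_i-1)}$ transforms into $v_i(x)\le C|x|^{-2/(p_i-1)}$, which, using the equivalence of the Korányi and Carnot--Carathéodory distances, bounds $v_i$ uniformly on each region $\{|x|\ge\rho\}$; combined with $v_i\le 1$ near $0$ this yields a uniform bound on every ball $B_R$.

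Next I would run the subelliptic bootstrap. Since the right-hand side is uniformly bounded on compact sets and $\widetilde{R}\circ\delta_{M_i^{-(p_i-1)/2}}\to\widetilde{R}(0)$, the uniform-constant version of Theorem \ref{TeoremaRegolarita}(3)--(4) (applicable because the $\theta_i$ eventually lie in a fixed $C^m$ neighborhood of $\cerchio{\theta}$) produces uniform $S^{k,p}$, hence $\Gamma^{k,\alfa}(B_R)$, bounds for each $R$ and $k$. By the compact embeddings of Theorem \ref{TeoremaRegolarita}(1)--(2) and a diagonal extraction I obtain a subsequence with $v_i\to v$ in $\Gamma^{k,\alfa}_{loc}$ for every $k$. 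The limit solves $-4\cerchio{\Delta}_b v=\widetilde{R}(0)v^3$ on all of $\H^1$ (the Webster-curvature term and the exponent defect vanish in the limit since $p_i\to 3$, and the domains $B_{\overline{r}M_i^{(p_i-1)/2}}$ exhaust $\H^1$), with $v\ge 0$ and $v(0)=1$. The Harnack inequality forces $v>0$, so Theorem \ref{Classificazione} applies after rescaling away the constant $\widetilde{R}(0)$, giving $v=U\circ\delta_\mu\circ L_{x_0}$. Since $\nabla v(0)=0$ by $C^1$-convergence and the Jerison--Lee bubble has a unique critical point, $x_0=0$; matching the value $v(0)=1$ together with the constant $\widetilde{R}(0)$ in the limiting equation fixes $\mu=\widetilde{R}(0)^{1/2}$, whence $v=U\circ\delta_{\widetilde{R}(0)^{1/2}}$.

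Finally, to pass from fixed balls to $B_{R_i}$: for each fixed $R$ one has $\N{v_i-v}_{\Gamma^{k,\alfa}(B_R)}\to 0$, so a standard diagonal argument yields $R_i\to\infty$ realizing $\N{v_i-v}_{\Gamma^{k,\alfa}(B_{R_i})}\le\e_i$; since $\log M_i\to\infty$ I may always slow $R_i$ down, keeping $R_i\to\infty$, so that in addition $R_i/\log M_i\to 0$. The crux of the whole argument is the third step --- the use of the recent $\H^1$ classification theorem to pin down the limit, together with the exclusion of concentration or vanishing at the origin, which is exactly what the normalization $v_i(0)=1$ at a local maximum and the Harnack inequality guarantee. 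The other delicate ingredient is the application of subelliptic estimates along a sequence of \emph{varying} CR structures, which is made possible by the uniformity clause of Theorem \ref{TeoremaRegolarita}.
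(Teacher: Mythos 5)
Your overall architecture (rescale, uniform local bounds, subelliptic bootstrap with uniform constants along the converging structures, diagonal extraction, classification via Theorem \ref{Classificazione}, then a second diagonal argument to produce $R_i$ with $R_i/\log M_i\to 0$) is the same as the paper's. But there is a genuine gap at the step where you claim the uniform bound on $v_i$ over a fixed ball $B_R$. You derive it from two ingredients: the decay estimate $v_i(x)\le C|x|^{-2/(p_i-1)}$, which controls $v_i$ only on $\{|x|\ge\rho\}$ for each fixed $\rho>0$ (with a constant blowing up as $\rho\to 0$), and the fact that $0$ is a local maximum of $v_i$, which gives $v_i\le 1$ ``near $0$''. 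The problem is that the neighborhood in which $x_i$ is a local maximum of $u_i$ is completely uncontrolled in the definition of isolated blow-up point; after rescaling by $\delta_{M_i^{-(p_i-1)/2}}$ it may shrink to a point, so the two regions you control need not cover any fixed ball uniformly in $i$. In particular nothing in your argument excludes $\sup_{B_{1/2}}v_i\to\infty$, i.e.\ a secondary concentration at points tending to $0$; ruling this out is precisely the content of this step. The local-maximum hypothesis is only good for $\cerchio{\nabla}^H v(0)=0$ in the limit.

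The paper closes this gap with a two-step argument you omit: first, the Harnack inequality on dyadic annuli (Lemma \ref{Harnack}) gives $\max_{\de B_r}v_i\le C\min_{\de B_r}v_i$ for all small $r$ with a constant uniform in $r$ and $i$; second, after a conformal change $\psi^2\theta$ making the Webster curvature positive, $\psi_i^{-1}v_i$ is a positive supersolution of an operator satisfying the maximum principle, whence $\min_{\de B_r}v_i\le C\min_{B_r}v_i\le Cv_i(0)=C$. Combining the two yields $\max_{\de B_r}v_i\le C$ for every $r\le\widetilde r$, hence the uniform bound on a fixed ball around the origin, which together with the decay estimate gives the bound on all of $B_{\overline{r}M_i^{(p_i-1)/2}}$. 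You should insert this argument (or an equivalent one) before running the bootstrap; the rest of your proof then goes through as written.
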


\begin{proof}
 %Using pseudohermitian normal coordinates
%% as in the proof of Lemma \ref{Harnack},
 %define $v_i=\frac{1}{M_i}u_i\circ\delta_{M_i^{-\frac{p_i-1}{2}}}\circ L_{x_i}$ (where $L_{x_i}(x)=x_i^{-1}x$ is the left translation) and let
 %$$\theta_i= M_i^{-(p_i-1)}(\delta_{M_i^{-\frac{p_i-1}{2}}})^*\theta$$
 %be the natural contact form on the CR structure $(\delta_{M_i^{-\frac{p_i-1}{2}}})^*\ci{H}$.
 
 %Then $v_i$ is defined in $B_{\overline{r}M_i^{\frac{p_i-1}{2}}}$, and satisfies
 Using the notation before the statement of this theorem, $v_i$ satisfies
 \begin{equation}\label{SistemaBlowUpIsolatiBolle}
  \begin{cases}
   L_{\theta_i}v_i = (\widetilde{R}\circ\delta_{M_i^{-\frac{p_i-1}{2}}})v_i^{p_i}\\
   v_i(0)=1\\
   \nabla^H_{\theta_i}v_i(0) = 0\\
   0< v_i(x)< Cd(x,0)^{-\frac{2}{p_i-1}}.
  \end{cases}
 \end{equation}
 Thanks to Lemma \ref{Harnack}, there exists $C$ such that
 $$\max_{\de B_r}v_i \le C\min_{\de B_r}v_i$$
 for $r<1$.
 Let $\psi>0$ be a function such that $\psi^2\theta$ has positive Webster curvature
 in some ball $B_{\widetilde{r}}(0)$ (for example $\phi=U\circ\delta_{\lambda}$ for $\lambda$ big enough)
 %(for example, since $M$ has positive CR Yamabe class, the first eigenfunction of $L_{\theta}$).
 %and define $\phi_i= \phi\circ\delta_{M_i^{-\frac{p_i-1}{2}}}$.
 and define $\psi_i= \psi\circ\delta_{M_i^{-\frac{p_i-1}{2}}}$.
 The operator
 $L_{\psi_i^2\theta_i}$
 satisfies the maximum principle, and since, by formula \eqref{LeggeTrasformazioneSublaplaciano},
 $$L_{\psi_i^2\theta_i}\left(\psi_i^{-1}v_i\right) = \psi_i^{-3}L_{\theta_i}v_i >0,$$
 then
 for any $r\le\widetilde{r}$ we have
 $\min_{B_r}\psi_i^{-1}v_i \ge \min_{\de B_r}\psi_i^{-1}v_i$,
 that is
 $$\min_{B_r}v_i \ge C\min_{\de B_r}v_i$$
 for some $C>0$.
 Therefore
 if $r\le\widetilde{r}$,
 $$\max_{\de B_r}v_i \le C\min_{\de B_r}v_i \le C_1 \min_{B_r}v_i\le C_1v_i(0)=C_1 .$$
 Thanks to this and to the last inequality in equation \eqref{SistemaBlowUpIsolatiBolle} we get that $v_i\le C$ in $B_{\overline{r}M_i^{\frac{p_i-1}{2}}}$ for any $i$ and some $C$.
 
 %Since the sequence of domains $B_{\overline{r}M_i^{\frac{p_i-1}{2}}}$ is increasing and the union thereof is $\H^1$, by elliptic estimates \rosso{(INSERIRE FONTE QUI O NEL CAPITOLO 2)} $v_i$ converges, up to subsequences, to a function $v$ satisfying
 
 Applying Theorem points 3 and 1 of Theorem \ref{TeoremaRegolarita},
 %Applying the regularity theory for $\Delta_b$ of Chapter 5 of \cite{JL1} (which is based on Folland and Stein's the regularity theory for the Kohn Laplacian $\square_b$ from \cite{FS}) and the fact that the constants in the various immersions and regularity theorems there are uniform locally in the pseudohermitian structure (which is not stated explicitely, but follows from the proofs),
 we can deduce that for any bounded open subset $V\subset\H^1$, eventually $\N{v_i}_{\Gamma^{1,\alfa}_{\theta_i}(V)}\le C_V$ for every $\alfa\in(0,1)$.
 By a standard bootstrap argument using point 4 of Theorem \ref{TeoremaRegolarita}, for any $k$ we have $\N{v_i}_{\Gamma^{k,\alfa}_{\theta_i}(V)}\le C_{V,k}$, and this implies, by point 2 of the same theorem, the uniformity statement thereof and formula \eqref{ConvergenzaStrutture}, that $\N{v_i}_{C^{k,\alfa}(V)}\le C'_{V,k}$ for every $k$.
 
 This implies that, for any $k$ and $R>0$, up to subsequences $v_i$ tends to some limit $v$ in $C^{k,\alfa}(B_R)$.
 By a diagonal argument, up to a further passage to subsequences we get that $v_i\to v$ in $C^{k,\alfa}(B_R)$ for any $k$ and $R$, with $v$ defined on $\H^1$ and satisfying:
 
% Let $G_i$ be the Green's function of $L_{\theta_i}$ coming from the Green's function of $L_{\theta}$ on $M$ through the pseudohermitian normal coordinates and dilation, defined in $B_{\overline{r}M_i^{\frac{p_i-1}{2}}}$. Then $G_i(x,y)\to G=\frac{C}{|xy^{-1}|^2}$ locally in $C^{\infty}$ outside the diagonal.
% For some $c\in(0,1)$ the integral
% $$w_i(x)= \int_{B_{c\overline{r}M_i^{\frac{p_i-1}{2}}}}G_i(x,y)v_i^3(y)$$
% is well defined in $B_{c\overline{r}M_i^{\frac{p_i-1}{2}}}$ and satisfies $L_{\theta_i}w_i = v_i^3$.
% It is standard to prove that $w_i$ is locally bounded in $\Gamma^{1,\alfa}$, and so it converges locally to some function $w$. Being bounded, $v_i$ has some weak limit $v$

 \begin{equation}
  \begin{cases}
   L_{\cerchio{\theta}}v = \widetilde{R}(0) v^3\\
   v(0)=1\\
   \cerchio{\nabla}^H v(0) = 0\\
   v>0.
  \end{cases}
 \end{equation}
 The thesis of the theorem follows from Catino-Li-Monticelli-Roncoroni's classification theorem (Theorem \ref{Classificazione}).
\end{proof}

We point out that in the proof of the theorem above we had to be slightly more careful than in the proof of the analogous Riemannian result in handling regularity theorems because, while Sobolev and Hölder spaces with respect to different Riemannian metrics on a compact manifold coincide, this is not true for the spaces $S^{k,\alfa}$ and $\Gamma^{k,\alfa}$ with respect to two different pseudohermitian structures $(\ci{H}_1,\theta_1)$ and $(\ci{H}_2,\theta_2)$ on the same manifold, because they are defined through derivatives along the respective Levi distributions $H_1(M)$ and $H_2(M)$, which do not necessarily coincide.

\begin{lemma}\label{LemmaStimaPuntiIsolatiSemplici}
 Let $\overline{x}$ an isolated simple blow-up point, $R_i\to\infty$, and suppose that the thesis of Lemma \ref{BlowUpIsolatiBolle} holds for some $\e_i\to 0$.
 Then, given a fixed, sufficiently small $\delta>0$, there exists $\rho_1\in(0,\rho)$ ($\rho$ being the one from Definition \ref{DefinizionePuntoIsolatoSemplice}) such that
 $$u_i(x)\le C M_i^{-\lambda_i}d(x,x_i)^{-2+\delta},$$
 $$|(u_i)_{,1}(x)|\le C M_i^{-\lambda_i}d(x,x_i)^{-3+\delta},$$
 $$|(u_i)_{,11}(x)|\le C M_i^{-\lambda_i}d(x,x_i)^{-4+\delta}$$
 $$|(u_i)_{,1\con{1}}(x)|\le C M_i^{-\lambda_i}d(x,x_i)^{-4+\delta},$$
 $$|(u_i)_{,0}(x)|\le C M_i^{-\lambda_i}d(x,x_i)^{-4+\delta}$$
 %and in CR normal coordinates
 $$|(u_i)_{,01}(x)|\le C M_i^{-\lambda_i}d(x,x_i)^{-5+\delta},$$
 $$|(u_i)_{,00}(x)|\le C M_i^{-\lambda_i}d(x,x_i)^{-6+\delta},$$
 for $R_iM_i^{-\frac{p_i-1}{2}}\le d(x,x_i)\le\rho_1$,
 where $\lambda_i=(2-\delta)\frac{p_i-1}{2}-1$.
\end{lemma}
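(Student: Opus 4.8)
The plan is to reduce everything to the first (scalar) estimate $u_i(x)\le CM_i^{-\lambda_i}d(x,x_i)^{-2+\delta}$, from which all the derivative bounds will follow by rescaling; recall $Q=4$, so $-2=-(Q-2)$ is precisely the homogeneity of the fundamental solution \eqref{SoluzioneFondamentale}. The natural inner scale is $r_i=R_iM_i^{-\frac{p_i-1}{2}}$. By Proposition \ref{BlowUpIsolatiBolle} the rescaled solution is uniformly close to $U\circ\delta_{\widetilde R(0)^{1/2}}$ on $B_{R_i}$, and since $U(x)\sim c_1|x|^{-2}$ for large $|x|$, evaluating at $|x|=R_i$ gives $\overline u_i(r_i)\le CM_iR_i^{-2}$; by the Harnack inequality (Lemma \ref{Harnack}) the same bound holds pointwise on $\de B_{r_i}(x_i)$. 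A computation of exponents shows $M_iR_i^{-2}\le M_i^{-\lambda_i}r_i^{-2+\delta}$ with $\lambda_i=(2-\delta)\frac{p_i-1}{2}-1$, so the candidate barrier already dominates $u_i$ on the inner sphere, with a factor $R_i^{\delta}$ to spare.

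To propagate this to the whole annulus $\{r_i\le d(x,x_i)\le\rho_1\}$ I would run a maximum-principle comparison for the linearised operator $L_\theta-\widetilde Ru_i^{p_i-1}$, which annihilates $u_i$. Here the isolated \emph{simple} hypothesis is essential: the monotonicity of $\overline w_i$ on $(r_i,\rho)$ (Definition \ref{DefinizionePuntoIsolatoSemplice}), together with Harnack, yields $u_i(x)\le C(d/r_i)^{-\frac{2}{p_i-1}}M_iR_i^{-2}$ throughout the annulus, whence $\widetilde Ru_i^{p_i-1}=o(d^{-2})$ uniformly in $i$. Thus the nonlinear term acts as a negligible potential, and one compares $u_i$ against $\phi_i=CM_i^{-\lambda_i}d^{-(Q-2)+\delta}$: since $-\Delta_b\bigl(|x|^{-(Q-2-\delta)}\bigr)=(Q-2-\delta)\delta\,|x|^{-(Q-\delta)}\lvert\nabla^H|x|\rvert^2\ge0$, the function $\phi_i$ is a supersolution of $-\Delta_b$ into which the lower-order and nonlinear terms can be absorbed. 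The outer radius $\rho_1$ I would treat by a continuity argument in $\rho_1$, applying the maximum principle on the maximal sub-annulus where a provisional bound holds and using the strict gain in the supersolution inequality to improve the constant, so that the estimate persists up to a fixed $\rho_1$.

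The hard part will be exactly this supersolution construction in the Heisenberg group. Unlike the Euclidean case, where $|\nabla|x||\equiv1$ makes $|x|^{-(n-2)+\delta}$ a strict supersolution everywhere, here the factor $\lvert\nabla^H|x|\rvert^2$ degenerates on the centre $\{z=0\}$, so the gain from $-\Delta_b\phi_i$ vanishes there; one must check that the remaining contributions (in particular the sign of the Webster curvature $R$ in the potential) still yield a supersolution, working if necessary in a conformal gauge with $R>0$, available since $\ci Y(M)>0$. Once the scalar bound is in hand, the derivative estimates follow by a standard rescaling: around a point $x$ with $d(x,x_i)=d$ one dilates by $\delta_d$ to unit scale, where by the scalar bound and Harnack the rescaled function is bounded and the pseudohermitian structures converge smoothly (Remark \ref{OsservazioneRiscalamento}); the interior Schauder estimates of Theorem \ref{TeoremaRegolarita} then control all its derivatives, and undoing the dilation produces exactly the claimed powers, each horizontal derivative ($Z_1$) costing a factor $d^{-1}$ and each Reeb derivative ($Z_0=T$) a factor $d^{-2}$, in accordance with the Heisenberg homogeneity.
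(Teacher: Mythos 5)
Your skeleton is the right one --- it is essentially the Li--Zhu Lemma 3.3 argument that the paper invokes verbatim (its proof is the single line ``analogous to the proof of Lemma 3.3 in [LZhu]''): match the barrier to $u_i$ on $\de B_{r_i}(x_i)$ via Proposition \ref{BlowUpIsolatiBolle} (your bookkeeping $M_iR_i^{-2}\le M_i^{-\lambda_i}r_i^{-2+\delta}$ is correct), use the simple-blow-up monotonicity plus Harnack to get $u_i^{p_i-1}\le o(1)\,d^{-2}$ so the nonlinearity is a small potential, compare with a power of the Korányi norm, and recover the derivative bounds by rescaling and Theorem \ref{TeoremaRegolarita}. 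The first genuine gap is at the outer boundary: the single-term barrier $CM_i^{-\lambda_i}d^{-2+\delta}$ cannot dominate $u_i$ on $\de B_{\rho_1}$ for any $i$-independent $C$, since there one only knows $u_i=O(\rho_1^{-2/(p_i-1)})=O(1)$ while the barrier is $O(M_i^{-\lambda_i})\to 0$; no ``continuity in $\rho_1$ / improve the constant'' scheme can close this, because the same mismatch occurs on the outer sphere of every sub-annulus. The actual content of the Li--Zhu proof is a two-term barrier $A\bigl(M_i^{-\lambda_i}d^{-2+\delta}+\max_{\de B_{\rho_1}}u_i\cdot\rho_1^{\delta}d^{-\delta}\bigr)$ (both powers are supersolutions), followed by a second use of the monotonicity of $r^{2/(p_i-1)}\overline{u}_i$ at a small fixed intermediate radius to show $\max_{\de B_{\rho_1}}u_i\le CM_i^{-\lambda_i}$ and absorb the second term into the first. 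This step is entirely missing from your proposal.

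The second gap is the degeneracy you flag but do not resolve. Since $-\Delta_b(\rho^{-2+\delta})=\delta(2-\delta)\rho^{-4+\delta}|\nabla^H\rho|^2$ with $|\nabla^H\rho|^2=|z|^2/\rho^2$ vanishing on the centre $\{z=0\}$, the gain of the barrier vanishes exactly where the potential $c=\widetilde{R}u_i^{p_i-1}=o(1)d^{-2}$ is still of order $d^{-2}$; passing to a gauge with $R>0$ does not help near the singularity, where $o(1)d^{-2}$ dominates $R=O(1)$. As written, your comparison function is not a supersolution of $L_\theta-c$ along the characteristic set, so the maximum-principle step does not go through; one must correct the barrier (for instance by adding a term whose sublaplacian produces a nondegenerate multiple of $\rho^{-4+\delta}$, or by working with a power of the Green function and handling the characteristic set separately). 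Until both points are repaired, the scalar estimate --- and hence everything downstream of it, since your derivative bounds are correctly derived from it by dilation and Schauder theory --- is not established.
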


\begin{proof}
 The proof is analogous to the proof of Lemma 3.3 in \cite{LZhu}.
 %\rosso{(LASCIARE COSÌ O RIPETERLA QUI?)}
\end{proof}

\begin{lemma}\label{StimaSoluzioneRiscalata}
 In the hypotheses of Lemma \ref{LemmaStimaPuntiIsolatiSemplici}
 %let $v_i(x)=\frac{1}{M_i}u_i\left(\delta_{M_i^{-\frac{p_i-1}{2}}}(x_i^{-1} \cdot x)\right)$.
% Then, if $x\in B_{\rho_1M_i^{\frac{p_i-1}{2}}}(0)$, the following estimates hold for $v_i$ and the covariant derivatives thereof with respect to the Tanaka-Webster connection of $\left(B_{\rho_1M_i^{\frac{p_i-1}{2}}}(0),(\delta_{M_i^{-\frac{p_i-1}{2}}})^*\ci{H},M_i^{-(p_i-1)}(\delta_{M_i^{-\frac{p_i-1}{2}}})^*\theta\right)$:
 for $|x|\le\rho_1M_i^{\frac{p_i-1}{2}}$
% on $B_{\rho_1M_i^{\frac{p_i-1}{2}}}(0)$
 the following estimates hold:
 $$v_i(x)\le CM_i^{\frac{p_i-1}{2}\delta}(1+|x|)^{-2},$$
 $$|(v_i)_{,1}(x)|\le CM_i^{\frac{p_i-1}{2}\delta}(1+|x|)^{-3},$$
 $$|(v_i)_{,11}(x)|\le CM_i^{\frac{p_i-1}{2}\delta}(1+|x|)^{-4},$$
 $$|(v_i)_{,1\con{1}}(x)|\le CM_i^{\frac{p_i-1}{2}\delta}(1+|x|)^{-4},$$
 $$|(v_i)_{,0}(x)|\le CM_i^{\frac{p_i-1}{2}\delta}(1+|x|)^{-4},$$
% and in CR normal coordinates
 $$|(v_i)_{,01}(x)|\le CM_i^{\frac{p_i-1}{2}\delta}(1+|x|)^{-5},$$
 $$|(v_i)_{,00}(x)|\le CM_i^{\frac{p_i-1}{2}\delta}(1+|x|)^{-6}.$$
\end{lemma}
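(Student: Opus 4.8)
The plan is to translate the estimates on $u_i$ from Lemma \ref{LemmaStimaPuntiIsolatiSemplici} directly into estimates on the rescaled function $v_i$ via the defining relation
$$v_i(x) = \frac{1}{M_i}u_i\bigl(\delta_{M_i^{-\frac{p_i-1}{2}}}(x_i^{-1}\cdot x)\bigr),$$
keeping careful track of the scaling factors. I would split the argument into two regimes according to the size of $|x|$ and patch them together.

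First, for $|x|\le R_i$ (the inner region), the conclusion is essentially the content of Proposition \ref{BlowUpIsolatiBolle}: there $v_i$ converges in $C^{k,\alpha}$ to $U\circ\delta_{\widetilde{R}(0)^{1/2}}$, and since $U$ and all its derivatives decay like the appropriate powers of the Korányi norm — $U(x)\sim|x|^{-2}$, $|\cerchio{\nabla}^H U|\sim|x|^{-3}$, and the higher horizontal/vertical derivatives decay as indicated by the homogeneity weights (each horizontal derivative lowering the degree by $1$, each $T$-derivative by $2$) — the stated bounds hold in this region with the factor $M_i^{\frac{p_i-1}{2}\delta}\ge 1$ to spare. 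The decay of $U$ and its derivatives can be read off from its explicit formula, noting that $(1+|x|)^{-2}$ is comparable to $U$ uniformly on $\H^1$.

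Second, for $R_i\le|x|\le\rho_1 M_i^{\frac{p_i-1}{2}}$ (the outer region), I would feed the pointwise bounds of Lemma \ref{LemmaStimaPuntiIsolatiSemplici} through the change of variables. Writing $y=\delta_{M_i^{-\frac{p_i-1}{2}}}(x_i^{-1}\cdot x)$, so that $d(y,x_i)\approx M_i^{-\frac{p_i-1}{2}}|x|$ by the properties of the dilations and the equivalence of the Korányi and Carnot–Carathéodory distances, the first estimate gives
$$v_i(x)=\frac{1}{M_i}u_i(y)\le \frac{C}{M_i}M_i^{-\lambda_i}d(y,x_i)^{-2+\delta}\le \frac{C}{M_i}M_i^{-\lambda_i}\bigl(M_i^{-\frac{p_i-1}{2}}|x|\bigr)^{-2+\delta}.$$
Substituting $\lambda_i=(2-\delta)\frac{p_i-1}{2}-1$ makes the powers of $M_i$ collapse: the exponent of $M_i$ becomes $-1-\lambda_i+\frac{p_i-1}{2}(2-\delta)=0$, leaving exactly $C|x|^{-2+\delta}\le CM_i^{\frac{p_i-1}{2}\delta}(1+|x|)^{-2}$ once one absorbs the $|x|^{\delta}$ against $|x|\le\rho_1 M_i^{\frac{p_i-1}{2}}$. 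The derivative estimates follow identically, because each covariant derivative of $v_i$ carries an extra factor $M_i^{-\frac{p_i-1}{2}}$ relative to the corresponding derivative of $u_i$ (the rescaled frame $\cerchio{Z}_\alpha$ picks up one dilation weight, $T$ two), and these factors are precisely balanced by the extra negative power in each line of Lemma \ref{LemmaStimaPuntiIsolatiSemplici}; the bookkeeping for $(v_i)_{,0}$ and $(v_i)_{,00}$ uses that $T$ scales with weight $2$.

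The main obstacle is the bookkeeping of the scaling weights, and in particular making sure the two regimes match at $|x|\approx R_i$ and that the exponent of $M_i$ genuinely cancels in every line — this relies on the specific value of $\lambda_i$ and on the correct dilation weight ($1$ for horizontal derivatives, $2$ for the Reeb direction) being attached to each covariant derivative. I would verify the cancellation once for $v_i$ and once for a generic derivative, and note that the remaining cases differ only in the integer power, so that the algebra of exponents is identical in each instance.
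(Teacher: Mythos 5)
Your proposal is correct and takes essentially the same route as the paper, whose entire proof is the one-line remark that the lemma follows from Proposition \ref{BlowUpIsolatiBolle} (inner region $|x|\le R_i$) and Lemma \ref{LemmaStimaPuntiIsolatiSemplici} (outer region); your exponent bookkeeping --- the cancellation forced by the value of $\lambda_i$ together with the dilation weights $1$ for horizontal derivatives and $2$ for $T$ --- is exactly the computation left implicit there. The only detail worth recording is that in the inner region the additive error $\e_i$ from Proposition \ref{BlowUpIsolatiBolle} is absorbed into $CM_i^{\frac{p_i-1}{2}\delta}(1+|x|)^{-2}$ because $R_i/\log M_i\to 0$ while $M_i^{\frac{p_i-1}{2}\delta}\to\infty$, which is what makes your ``factor to spare'' remark rigorous.
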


\begin{proof}
 It follows from Lemmas \ref{BlowUpIsolatiBolle}
 %\ref{StimaTu}
 and \ref{LemmaStimaPuntiIsolatiSemplici}.
% The fact that Lemma \ref{StimaTu} holds in pseudohermitian CR coordinates is not a problem because \rosso{SCRIVERE IL PERCHÉ QUI O NEL CAPITOLO 2}
\end{proof}

\begin{lemma}\label{StimaSublaplacianoCoordinate}
 If $\overline{x}$ is an isolated blow-up point then, with the notation of Lemma \ref{LemmaStimaPuntiIsolatiSemplici}, in pseudohermitian normal coordinates around $x_i$
 $$\left|\int_{B_{\rho_1}(x_i)}\left(u_i +\Xi u_i\right)(\cerchio{\Delta}_bu_i -\Delta_b u_i)\right| \le C M_i^{-2+2\delta + o(1)}$$
\end{lemma}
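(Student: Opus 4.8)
The plan is to bound the integrand pointwise and then integrate, splitting $B_{\rho_1}(x_i)$ into the inner ball $B_{R_i\mu_i}(x_i)$, where I abbreviate $\mu_i=M_i^{-\frac{p_i-1}{2}}$, and the outer annulus $R_i\mu_i\le d(x,x_i)\le\rho_1$ on which the estimates of Lemma \ref{LemmaStimaPuntiIsolatiSemplici} are available. Note first that, since $n=1$ gives $Q=4$ and $\frac{Q-2}{2}=1$, the factor $u_i+\Xi u_i$ is exactly the factor $\Xi u_i+\frac{Q-2}{2}u_i$ attached to $\Delta_bu-\cerchio{\Delta}_bu$ in the Pohozaev identity of Proposition \ref{PohozaevProp}, so this lemma is the estimate of the Pohozaev error term. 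Expanding the structure near $x_i$ in pseudohermitian normal coordinates (see \cite{JL3} and the appendix), the difference $\cerchio{\Delta}_bu_i-\Delta_bu_i$ is a second order operator whose coefficients vanish at $x_i$ to the orders dictated by the homogeneity of the Heisenberg approximation, schematically
$$\cerchio{\Delta}_bu_i-\Delta_bu_i = O(d^2)\left((u_i)_{,11}+(u_i)_{,1\con1}\right)+O(d^3)(u_i)_{,01}+O(d^4)(u_i)_{,00}+O(d)(u_i)_{,1}+O(d^2)(u_i)_{,0},$$
with $d=d(x,x_i)$. This is precisely why all of these mixed derivatives are controlled in Lemma \ref{LemmaStimaPuntiIsolatiSemplici}: inserting those bounds, every summand is of the same order and produces $|\cerchio{\Delta}_bu_i-\Delta_bu_i|\le CM_i^{-\lambda_i}d^{-2+\delta}$ on the outer annulus. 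For the other factor, writing $\Xi=\sum(x_kX_k+y_kY_k)+2tT$ and using $|z|\lesssim d$, $|t|\lesssim d^2$ together with the first order and Reeb estimates of the same lemma, one gets $|\Xi u_i|\le CM_i^{-\lambda_i}d^{-2+\delta}$, hence $|u_i+\Xi u_i|\le CM_i^{-\lambda_i}d^{-2+\delta}$ as well.

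Multiplying the two bounds gives $|(u_i+\Xi u_i)(\cerchio{\Delta}_bu_i-\Delta_bu_i)|\le CM_i^{-2\lambda_i}d^{-4+2\delta}$ on the outer annulus. Integrating against the Heisenberg volume, whose homogeneous dimension is $Q=4$, and using $d\cerchio{V}\sim d^{3}\,dd$ in Korányi polar coordinates, the radial integral $\int_{R_i\mu_i}^{\rho_1}d^{-4+2\delta}d^{3}\,dd=\int_{R_i\mu_i}^{\rho_1}d^{-1+2\delta}\,dd$ converges because $\delta>0$ and is bounded by $\frac{1}{2\delta}\rho_1^{2\delta}$. Thus the outer contribution is at most $CM_i^{-2\lambda_i}$, and since $\lambda_i=(2-\delta)\frac{p_i-1}{2}-1=1-\delta+o(1)$ as $p_i\to 3$, this equals $CM_i^{-2+2\delta+o(1)}$.

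On the inner ball I would rescale to the bubble variable, writing $u_i(x)=M_iv_i(y)$ with $y=\delta_{1/\mu_i}(x_i^{-1}x)$ and $|y|\le R_i$, so that $d\cerchio{V}(x)=\mu_i^{4}\,d\cerchio{V}(y)$ up to the normal-coordinate Jacobian, which is close to $1$ by formula \eqref{ConvergenzaStrutture}. Horizontal derivatives of $u_i$ scale by $M_i\mu_i^{-1}$, the derivatives $(u_i)_{,11},(u_i)_{,1\con1},(u_i)_{,0}$ by $M_i\mu_i^{-2}$, $(u_i)_{,01}$ by $M_i\mu_i^{-3}$ and $(u_i)_{,00}$ by $M_i\mu_i^{-4}$; substituting these together with the coefficient orders above, the powers of $\mu_i$ cancel and Lemma \ref{StimaSoluzioneRiscalata} gives, for the leading term, $|O(d^2)(u_i)_{,1\con1}|\le CM_i^{1+\frac{p_i-1}{2}\delta}|y|^2(1+|y|)^{-4}$, while $|u_i+\Xi u_i|\le CM_i^{1+\frac{p_i-1}{2}\delta}(1+|y|)^{-2}$. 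Their product, times $\mu_i^{4}=M_i^{-2(p_i-1)}$ and integrated over $|y|\le R_i$ against $d\cerchio{V}(y)\sim|y|^{3}\,d|y|$, reduces to $\int_0^{R_i}|y|^{5}(1+|y|)^{-6}\,d|y|\sim\log R_i$; collecting the powers of $M_i$ yields $CM_i^{2-2(p_i-1)+(p_i-1)\delta}\log R_i=CM_i^{-2+2\delta+o(1)}\log R_i$, and since $R_i/\log M_i\to 0$ forces $\log R_i=M_i^{o(1)}$, the inner contribution is again $CM_i^{-2+2\delta+o(1)}$, closing the estimate.

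The hard part is pinning down the exact orders of vanishing in the expansion of $\cerchio{\Delta}_b-\Delta_b$: one must verify from the normal-coordinate expansion of \cite{JL3} that the horizontal second order correction is genuinely $O(d^2)$ (homogeneity two) rather than $O(d)$, since with only $O(d)$ the radial integral $\int d^{-2+2\delta}\,dd$ would diverge at the origin and the whole argument would break down. A secondary subtlety is the matching of the two regions, because the pointwise estimates of Lemma \ref{LemmaStimaPuntiIsolatiSemplici} are unavailable for $d<R_i\mu_i$, so the inner ball must be treated entirely through the rescaled estimates of Lemma \ref{StimaSoluzioneRiscalata}, and one has to check that the borderline logarithmic divergence appearing there is exactly absorbed by the growth constraint $R_i/\log M_i\to 0$.
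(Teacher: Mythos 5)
Your proposal is correct and follows essentially the same route as the paper: the key inputs are identical (the normal-coordinate expansion of $\Delta_b-\cerchio{\Delta}_b$ from the appendix, whose coefficient orders match your schematic --- the paper even gets $O(|x|^6)$ rather than $O(|x|^4)$ on the $\cerchio{T}^2$ term --- combined with the derivative estimates of Lemmas \ref{LemmaStimaPuntiIsolatiSemplici} and \ref{StimaSoluzioneRiscalata}, the scaling of each covariant derivative, and the borderline logarithmic integral absorbed into $M_i^{o(1)}$). The only difference is organizational: the paper rescales the whole ball $B_{\rho_1}(x_i)$ at once and integrates over $|x|\le\rho_1 M_i^{\frac{p_i-1}{2}}$ using Lemma \ref{StimaSoluzioneRiscalata} on its full stated range, whereas you split into an inner ball and an outer annulus treated in the original variable; both bookkeepings yield the same bound, and your flagged subtlety about the $O(d^2)$ vanishing of the second-order horizontal coefficients is precisely what Lemmas \ref{SublaplacianoCoordinate} and \ref{DisuguaglianzaSublaplaciano} establish.
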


\begin{proof}
 Thanks to formulas \eqref{FormulaCampoXi} and \eqref{RelazioniInverseCoordinatePH} and Lemma \ref{StimaSoluzioneRiscalata}, and the fact that $v_i$ is real, we have
 \footnote{Estimate \eqref{RelazioniInverseCoordinatePH} has been proved for a fixed pseudohermitian structure, while the functions $v_i$ are differentiated with respect to a sequence of pseudohermitian structures, but since that sequence converges in $C^{\infty}$, the estimate hold with an uniform constant. This fact is used also later.}
 $$\left|v_i +\Xi v_i\right|\lesssim v_i +|x||\cerchio{Z}v_i| + |x|^2|\cerchio{T}v_i| \lesssim v_i +|x||(v_i)_{,1}| + |x|^2|(v_i)_{,0}| \lesssim$$
 $$\lesssim M_i^{\frac{p_i-1}{2}\delta}(1+|x|)^{-2} + |x|M_i^{\frac{p_i-1}{2}\delta}(1+|x|)^{-3} + |x|^2M_i^{\frac{p_i-1}{2}\delta}(1+|x|)^{-4}\lesssim$$
 $$\lesssim M_i^{\frac{p_i-1}{2}\delta}(1+|x|)^{-2}.$$ 
 Furthermore define $v_i=\frac{1}{M_i}u_i\circ\delta_{M_i^{-\frac{p_i-1}{2}}}$.
 Since $u_i$ is real, using Lemmas \ref{DisuguaglianzaSublaplaciano} and \ref{StimaSoluzioneRiscalata}
 %\rosso{(LA PRIMA DISUGUAGLIANZA VALE PER LE DERIVATE IN $\H^1$, CHE SONO CIRCA UGUALI. COMMENTARE)}
 $$|\Delta_bu_i - \cerchio{\Delta}_bu_i| \lesssim |x||(u_i)_{,1}| + |x|^2|(u_i)_{,0}| + |x|^2|(u_i)_{,1\con{1}}| +|x|^2|(u_i)_{,11}| +$$
 $$+|x|^3|(u_i)_{01}| +|x|^6|(u_i)_{,00}| \lesssim$$
 $$\lesssim M_i\left(M_i^{-\frac{p_i-1}{2}}|x|M_i^{\frac{p_i-1}{2}}|(v_i)_{,1}| + M_i^{-(p_i-1)}|x|^2M_i^{p_i-1}|(v_i)_{,0}|+\right.$$
 $$+ M_i^{-(p_i-1)}|x|^2M_i^{p_i-1}|(v_i)_{,1\con{1}}| +M_i^{-(p_i-1)}|x|^2M_i^{p_i-1}|(v_i)_{,11}| +$$
 $$\left.+ M_i^{-3\frac{p_i-1}{2}}|x|^3M_i^{3\frac{p_i-1}{2}}|(v_i)_{01}| +M_i^{-3(p_i-1)}|x|^6M_i^{2(p_i-1)}|(v_i)_{,00}|\right)\circ\delta_{M_i^{\frac{p_i-1}{2}}}\lesssim$$
 $$\lesssim M_iM_i^{\frac{p_i-1}{2}\delta}\left(|x|(1+|x|)^{-3} + |x|^2(1+|x|)^{-4}+ |x|^2(1+|x|)^{-4} +\right.$$
 $$\left.+|x|^2(1+|x|)^{-4} + |x|^3(1+|x|)^{-5} +M_i^{-(p_i-1)}|x|^6(1+|x|)^{-6}\right)\circ\delta_{M_i^{\frac{p_i-1}{2}}}\lesssim$$
 $$\lesssim M_iM_i^{\frac{p_i-1}{2}\delta}\left(|x|(1+|x|)^{-3} +M_i^{-(p_i-1)}|x|^6(1+|x|)^{-6}\right)\circ\delta_{M_i^{\frac{p_i-1}{2}}}.$$
 So
 $$\left|\int_{B_{\rho_1}(x_i)}\left(u_i +\Xi u_i\right)(\cerchio{\Delta}_bu_i -\Delta_b u_i)\right| =$$
 $$= M_i M_i^{-2(p_i-1)}\left|\int_{B_{\rho_1M_i^{\frac{p_i-1}{2}}}(0)}\left(v_i +\Xi v_i\right)\cdot (\Delta_bu_i - \cerchio{\Delta}_bu_i)\circ\delta_{M_i^{-\frac{p_i-1}{2}}} \right|\lesssim$$
 $$\lesssim M_i^2 M_i^{-2(p_i-1)}M_i^{(p_i-1)\delta}\int_{B_{\rho_1M_i^{\frac{p_i-1}{2}}}(0)}(1+|x|)^{-2}\cdot\left[|x|(1+|x|)^{-3} +\right.$$
 $$\left. +M_i^{-(p_i-1)}|x|^6(1+|x|)^{-6}\right] \lesssim$$
 $$\lesssim M_i^{4-2p_i}M_i^{(p_i-1)\delta}\left[\log M_i + M_i^{-(p_i-1)}M_i^2\right] = M_i^{-2+2\delta + o(1)}.$$
\end{proof}

\begin{lemma}\label{StimaTau}
 In the hypotheses of Lemma \ref{LemmaStimaPuntiIsolatiSemplici}, if $\tau_i=\frac{Q+2}{Q-2}-p_1 = 3-p_i$ then
 $$\tau_i = O(u_i(x_i)^{-2 +2\delta +o(1)})$$
 and in particular $u_i(x_i)^{\tau_i}\to 1$.
\end{lemma}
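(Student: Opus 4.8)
The plan is to feed the Pohozaev identity of Proposition~\ref{PohozaevProp}, applied on a fixed ball $B_{\rho_1}(x_i)$ in pseudohermitian normal coordinates around $x_i$ with $p=p_i$, and to read off $\tau_i$ from the one term that degenerates at the critical exponent. Since $Q=4$ in dimension three, the coefficient $\frac{1}{b_n}\left(\frac{Q}{p_i+1}-\frac{Q-2}{2}\right)$ equals $\frac{1}{b_n}\frac{\tau_i}{4-\tau_i}$, so the leading volume contribution is $\frac{1}{b_n}\frac{\tau_i}{4-\tau_i}\int_{B_{\rho_1}}\widetilde{R}\,u_i^{p_i+1}\,d\cerchio{V}$. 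Rescaling by $\delta_\lambda$ with $\lambda=M_i^{-\frac{p_i-1}{2}}$ turns this integral into $M_i^{\tau_i}\widetilde{R}(0)\int_{B_{R_i}}v_i^{p_i+1}\,d\cerchio{V}\,(1+o(1))$, and by Proposition~\ref{BlowUpIsolatiBolle}, $v_i(0)=1$ and $M_i^{\tau_i}\ge 1$ it is bounded below by $c\,M_i^{\tau_i}$ with $c>0$. Thus the main term is $\gtrsim\tau_i M_i^{\tau_i}$, and the whole proof reduces to showing that every remaining term in the identity is $O(M_i^{-2+2\delta+o(1)})$; dividing the resulting inequality $c\,\tau_i M_i^{\tau_i}\le C\,M_i^{-2+2\delta+o(1)}$ by $M_i^{\tau_i}\ge 1$ will then give the estimate.

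The inessential error terms I would dispatch directly from the decay estimates. On $\de B_{\rho_1}$, where $\rho_1$ is fixed, every boundary integrand is quadratic in $u_i$, $\nabla^H u_i$, $\Xi u_i$, so Lemma~\ref{LemmaStimaPuntiIsolatiSemplici} gives each factor a bound $O(M_i^{-\lambda_i})$ and, since $2\lambda_i\to 2-2\delta$, the boundary integral is $O(M_i^{-2\lambda_i})=O(M_i^{-2+2\delta+o(1)})$ (the $u_i^{p_i+1}$ boundary term being of even smaller order $M_i^{-\lambda_i(p_i+1)}$). The interior correction $\left(\Xi u_i+\frac{Q-2}{2}u_i\right)(\Delta_b u_i-\cerchio{\Delta}_b u_i)$ is exactly $O(M_i^{-2+2\delta+o(1)})$ by Lemma~\ref{StimaSublaplacianoCoordinate}. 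Finally $\int_{B_{\rho_1}}R\,u_i^2$ and $\int_{B_{\rho_1}}\Xi(R)u_i^2$ are controlled by $\int_{B_{\rho_1}}u_i^2$: splitting into the bubble region $|x|\le R_i\lambda$ and the complementary annulus and using Lemma~\ref{StimaSoluzioneRiscalata} on each, the annulus contributes $O(M_i^{-2\lambda_i})$ and the bubble region $O(M_i^{-2+o(1)}\log R_i)$, so the total is $O(M_i^{-2+2\delta+o(1)})$.

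The delicate term is $\frac{1}{b_n(p_i+1)}\int_{B_{\rho_1}}\Xi(\widetilde{R})\,u_i^{p_i+1}$, and this is where the main work lies. A crude bound using only $\Xi(\widetilde{R})(0)=0$, i.e.\ $|\Xi(\widetilde{R})(x)|\lesssim|x|$, gives merely $O(M_i^{\tau_i-1})$, which after dividing by $M_i^{\tau_i}$ would yield the far weaker $\tau_i=O(M_i^{-1})$. To reach the claimed order one has to exploit that the Jerison--Lee bubble $U$ of Theorem~\ref{Classificazione} is invariant under the rotations $z\mapsto e^{is}z$ and under $t\mapsto -t$, while the first-order part $\ell$ of $\Xi(\widetilde{R})$ is odd under both: the leading odd moment $\int \ell(\delta_\lambda y)\,(U\circ\delta_{\widetilde{R}(0)^{1/2}})^{p_i+1}\,d\cerchio{V}$ vanishes identically. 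What survives is (a) the second-order Taylor part of $\Xi(\widetilde{R})$, which carries an extra factor $\lambda^2=M_i^{-(p_i-1)}$ and so contributes $M_i^{\tau_i}M_i^{-(p_i-1)}=M_i^{-2+o(1)}$, and (b) the mismatch between $v_i^{p_i+1}$ and $(U\circ\delta_{\widetilde{R}(0)^{1/2}})^{p_i+1}$, estimated through the $C^k$-closeness $\e_i$ of Proposition~\ref{BlowUpIsolatiBolle} and the uniform decay $(1+|y|)^{-2}$ of Lemma~\ref{StimaSoluzioneRiscalata}. Making (b) fit within the target order is the principal obstacle: the bare bound $\lambda\,\e_i$ is not a power of $M_i$, so one must instead transfer the vanishing of the moment to $v_i$ using the equation satisfied by $v_i-U\circ\delta_{\widetilde{R}(0)^{1/2}}$ and the smallness of $v_i$ and $U$ at radius $R_i$, exactly as in the Riemannian estimate of Li--Zhu that Lemma~\ref{LemmaStimaPuntiIsolatiSemplici} already mirrors.

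Assembling all bounds in the Pohozaev identity gives $c\,\tau_i M_i^{\tau_i}\le C\,M_i^{-2+2\delta+o(1)}$ with $c>0$; since $M_i^{\tau_i}\ge 1$ this is precisely $\tau_i=O(M_i^{-2+2\delta+o(1)})$. The final assertion is then immediate: taking $\delta$ small enough that $-2+2\delta<0$, we have $\tau_i\log M_i=O(M_i^{-2+2\delta+o(1)}\log M_i)\to 0$, because a fixed negative power of $M_i$ beats $\log M_i$, and hence $u_i(x_i)^{\tau_i}=e^{\tau_i\log M_i}\to 1$.
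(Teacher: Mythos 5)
Your skeleton is exactly the paper's: apply Proposition \ref{PohozaevProp} on $B_{\rho_1}(x_i)$, observe that $\frac{1}{p_i+1}-\frac14=\frac{\tau_i}{4(p_i+1)}$ multiplies $\int_{B_{\rho_1}}\widetilde{R}u_i^{p_i+1}$, bound that integral below by a positive constant via Proposition \ref{BlowUpIsolatiBolle}, and push every remaining term below $M_i^{-2+2\delta+o(1)}$; your treatment of the boundary terms, of $\int Ru_i^2$ and $\int\Xi(R)u_i^2$, and of the $\bigl(\Xi u_i+u_i\bigr)(\Delta_bu_i-\cerchio{\Delta}_bu_i)$ term via Lemmas \ref{LemmaStimaPuntiIsolatiSemplici}, \ref{StimaSoluzioneRiscalata} and \ref{StimaSublaplacianoCoordinate} coincides with the paper's. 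The one point where you diverge is $\int_{B_{\rho_1}}\Xi(\widetilde{R})u_i^{p_i+1}$: the paper uses no symmetry or cancellation there at all. It simply rescales, bounds $|\Xi(\widetilde{R})|$ by $M_i^{-\frac{p_i-1}{2}}|y|+M_i^{-(p_i-1)}|y|^2$ in the blown-up variable, inserts the decay $v_i\lesssim M_i^{\frac{p_i-1}{2}\delta}(1+|y|)^{-2}$, and evaluates the resulting integrals directly.

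That said, your instinct about this term is the substantive issue. The rescaled integral $\int_{B_R}|y|(1+|y|)^{-2(p_i+1)}$ is \emph{convergent} for $p_i$ near $3$ (the radial integrand behaves like $r^{2-2p_i}$ after accounting for the factor $r^{Q-1}=r^3$), so it is bounded below by a positive constant and cannot be $O\bigl(R^{-2(p_i+1)+5}\bigr)$ as the paper's computation asserts; the direct method therefore yields only $O\bigl(M_i^{\tau_i-1+O(\delta)}\bigr)$ for this term, hence $\tau_i=O\bigl(M_i^{-1+O(\delta)}\bigr)$, exactly the weaker rate you predict. That weaker rate still gives $u_i(x_i)^{\tau_i}\to1$, which is what the sequel actually uses (and the term vanishes identically once $\widetilde{R}$ is taken constant, as happens from equation \eqref{EquazioneRCostante} onward), but it does not give the stated exponent $-2+2\delta$ for a general $C^1$ curvature $\widetilde{R}$. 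Your parity argument --- the linear-in-$z$ part of $\Xi(\widetilde{R})$ integrates to zero against the rotation-invariant bubble --- is the natural route to the full rate, but, as you concede yourself, step (b), namely transferring the vanishing moment from $U\circ\delta_{\widetilde{R}(0)^{1/2}}$ to $v_i^{p_i+1}$ with an error that is a definite negative power of $M_i$ rather than $\e_iM_i^{-\frac{p_i-1}{2}}$, is not carried out. So as written your proposal, like the paper's direct estimate, establishes only $\tau_i=O\bigl(M_i^{-1+O(\delta)+o(1)}\bigr)$; the gap you flag is real and is not resolved by either argument in its present form.
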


\begin{proof}
 Let us apply the Pohozaev identity of Proposition \ref{PohozaevProp} with respect to the base point $x_i$ with $r=\rho_1$:
 $$\int_{B_{\rho_1}}\left(\left(\frac{1}{p_i+1}-\frac{1}{4}\right)\widetilde{R}u_i^{p_i+1} -\frac{1}{4}Ru_i^2+ \frac{1}{4}\frac{1}{p_i+1}\Xi(\widetilde{R})u_i^{p_i+1}+\right.$$
 $$\left.-\frac{1}{8}\Xi(R)u_i^2 -\left(\Xi u_i+u_i\right)(\Delta_bu(x)-\cerchio{\Delta}_bu)\right)d\cerchio{V}=$$
 $$=\int_{\de B_{\rho_1}}\left(\left(\frac{1}{4}\frac{1}{p_i+1}\widetilde{R}u_i^{p_i+1}-\frac{1}{8}Ru_i^2  \right)\Xi\cdot\cerchio{\ni} +\ci{B}(x,u_i,\cerchio{\nabla}^Hu)\right)d\cerchio{\sigma}.$$
 Let us estimate the various terms.
 $$\int_{B_{\rho_1}}\Xi(\widetilde{R})u_i^{p_i+1} \lesssim$$
 $$\lesssim M_i^{p_i+1}M_i^{-2(p_i-1)}\int_{B_{\rho_1M_i^{\frac{p_i-1}{2}}}}(M_i^{-\frac{p_i-1}{2}}|x|+M_i^{-(p_i-1)}|x|^2)v_i^{p_i+1} \lesssim$$
 $$\lesssim M_i^{3-p_i}\left(\int_{B_{\rho_1M_i^{\frac{p_i-1}{2}}}}M_i^{-\frac{p_i-1}{2}}|x|v_i^{p_i+1}+ \int_{B_{\rho_1M_i^{\frac{p_i-1}{2}}}}M_i^{-(p_i-1)}|x|^2v_i^{p_i+1}\right)\lesssim$$
 $$\lesssim M_i^{\frac{7-3p_i}{2}}M_i^{\frac{p_i^2-1}{2}\delta}\int_{B_{\rho_1M_i^{\frac{p_i-1}{2}}}}|x|(1+|x|)^{-2(p_i+1)}+$$
 $$+M_i^{4-2p_i}M_i^{\frac{p_i^2-1}{2}\delta}\int_{B_{\rho_1M_i^{\frac{p_i-1}{2}}}}|x|^2(1+|x|)^{-2(p_i+1)}\lesssim$$
 $$\lesssim M_i^{\frac{7-3p_i}{2}}M_i^{\frac{p_i^2-1}{2}\delta}M_i^{\frac{p_i-1}{2}\left[-2(p_i+1)+5\right]} +M_i^{4-2p_i}M_i^{\frac{p_i^2-1}{2}\delta}M_i^{\frac{p_i-1}{2}\left[-2(p_i+1)+6\right]}=$$
 $$=M_i^{\frac{7-3p_i}{2}}M_i^{\frac{p_i^2-1}{2}\delta}M_i^{\frac{p_i-1}{2}\left[-2(p_i+1)+5\right]} +M_i^{4-2p_i}M_i^{\frac{p_i^2-1}{2}\delta}M_i^{\frac{p_i-1}{2}\left[-2(p_i+1)+6\right]}=$$
 $$=M_i^{\frac{7-3p_i}{2} +5\frac{p_i-1}{2}}M_i^{\frac{p_i^2-1}{2}\delta}M_i^{-(p_i^2-1)} +M_i^{4-2p_i +6\frac{p_i-1}{2}}M_i^{\frac{p_i^2-1}{2}\delta}M_i^{-(p_i^2-1)}=$$
 $$=M_i^{p_i+1}M_i^{\frac{p_i^2-1}{2}\delta}M_i^{-(p_i^2-1)} = M_i^{-(p_i+1)(p_i-2)}M_i^{\frac{p_i^2-1}{2}\delta},$$
 
 $$\int_{B_{\rho_1}}\left(\frac{1}{4}R +\frac{1}{8}\Xi(R)\right)u_i^2 \lesssim M_i^2u_i(x_i)^{-2(p_i-1)}\int_{B_{\rho_1M_i^{\frac{p_i-1}{2}}}}v_i^2 \lesssim$$
 $$\lesssim M_i^{4-2p_i}M_i^{(p_i-1)\delta} \int_{B_{\rho_1M_i^{\frac{p_i-1}{2}}}}(1+|x|)^{-4}\lesssim M_i^{4-2p_i}M_i^{(p_i-1)\delta}\log M_i,$$
 and thanks to Lemma \ref{LemmaStimaPuntiIsolatiSemplici} it is obvious that
 $$\int_{\de B_{\rho_1}}\left(\left(\frac{1}{4}\frac{1}{p_i+1}\widetilde{R}u_i^{p_i+1}-\frac{1}{8}Ru_i^2  \right)\Xi\cdot\cerchio{\ni} +\ci{B}(x,u_i,\cerchio{\nabla}^Hu)\right)d\cerchio{\sigma}=$$
 $$=O(M_i^{(-2+\delta)(p_i-1)+2}).$$
 Finally by Proposition \ref{BlowUpIsolatiBolle} it holds that
 $$\int_{B_{\rho_1M_i^{\frac{p_i-1}{2}}}}u_i^{p_i+1} \gtrsim u_i(x_i)^{p_i+1}u_i(x_i)^{-2(p_i-1)}\cdot $$
 $$\cdot\int_{B_{R_i}}\frac{1}{u_i(x_i)^{p_i+1}}u_i\left(\delta_{M_i^{-\frac{p_i-1}{2}}}(x_i^{-1} \cdot x)\right)^{p_i+1}\gtrsim u_i(x_i)^{3-p_i} \gtrsim 1.$$
 So, using also Lemma \ref{StimaSublaplacianoCoordinate}, and the fact that $\frac{1}{p_i+1}-\frac{1}{4}= \frac{\tau_i}{4(p_i+1)}$, we get the thesis.
\end{proof}

\begin{lemma}\label{LemmaCSigma}
 In the hypotheses of Lemma \ref{LemmaStimaPuntiIsolatiSemplici}, if $\overline{x}$ is an isolated simple blow-up point then for every $\sigma\in(0,\frac{\overline{r}}{2})$
 $$\limsup_{i\to\infty}\max_{\de B_{\sigma}}M_iu_i(x) \le C(\sigma).$$
\end{lemma}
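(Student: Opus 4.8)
The plan is to deduce the pointwise bound on $\de B_\sigma$ from a bound on the spherical average $\overline u_i(\sigma)$ and then to exclude the failure of the latter by a contradiction that exploits the \emph{simple} hypothesis through the shape of $\overline w_i$. First I would apply the Harnack inequality of Lemma \ref{Harnack} with $r=\sigma$: it makes $\max_{\de B_\sigma}u_i$, $\min_{\de B_\sigma}u_i$ and $\overline u_i(\sigma)$ mutually comparable, so that $\max_{\de B_\sigma}M_iu_i\le CM_i\overline u_i(\sigma)$, and it is enough to prove $M_i\overline u_i(\sigma)\le C$, that is $\overline w_i(\sigma)\le CM_i^{-1}$ (recall $\overline w_i(r)=r^{2/(p_i-1)}\overline u_i(r)$ and $2/(p_i-1)\to1$). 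When $\sigma\ge\rho_1$ I would first treat a radius below $\rho_1$ and then propagate to $\sigma$ by a finite Harnack chain over $B_{\overline r}\setminus B_{\rho_1}$, on which $u_i$ is uniformly controlled since $\overline x$ is the only nearby blow-up point; so I may assume $\sigma<\rho_1$.

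The difficulty is that the decay estimate of Lemma \ref{LemmaStimaPuntiIsolatiSemplici} only yields $M_iu_i\le CM_i^{\delta+o(1)}d(x,x_i)^{-2+\delta}$, losing a factor $M_i^{\delta}$, whereas the statement needs the sharp Green-function rate $M_iu_i\lesssim d(x,x_i)^{-(Q-2)}$ (here $Q=4$); note that Lemma \ref{StimaTau} makes $p_i\to3$ so fast that $M_i^{\tau_i}\to1$, but this alone does not absorb the loss. I would recover the sharp rate by \textbf{selecting the breakdown scale}. Arguing by contradiction, assume $M_i\overline u_i(\sigma)\to\infty$ along a subsequence, and fix a constant $C_0$ strictly larger than the coefficient $c_*>0$ of $|x|^{-(Q-2)}$ in the far field of the bubble $U\circ\delta_{\widetilde R(0)^{1/2}}$; by Proposition \ref{BlowUpIsolatiBolle} and $M_i^{\tau_i}\to1$ one has $M_iu_i\approx c_*\,d(x,x_i)^{-(Q-2)}<C_0\,d(x,x_i)^{-(Q-2)}$ at the bubble edge $d(x,x_i)\sim R_iM_i^{-(p_i-1)/2}$. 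Since the contradiction hypothesis and Harnack give $\max_{\de B_\sigma}M_iu_i>C_0\sigma^{-(Q-2)}$ for $i$ large, by continuity there is a smallest radius $\Lambda_i\in(R_iM_i^{-(p_i-1)/2},\sigma)$ with $\max_{d(x,x_i)=\Lambda_i}M_iu_i=C_0\Lambda_i^{-(Q-2)}$, and necessarily $\Lambda_i\gg R_iM_i^{-(p_i-1)/2}$.

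Next I would blow up at this scale, setting $\phi_i=\Lambda_i^{\,Q-2}M_i\,(u_i\circ\delta_{\Lambda_i}\circ L_{x_i})$ in pseudohermitian normal coordinates, normalized so that the fundamental-solution profile is $O(1)$. By the choice of $\Lambda_i$ one has $0<\phi_i(y)\le C_0|y|^{-(Q-2)}$ on $B_1\setminus\{0\}$ with equality at some $y_i\in\de B_1$, and $\phi_i$ solves an equation $L\phi_i=c_i\phi_i$ whose potential tends uniformly to $0$ on $B_1\setminus B_\e$ — this uses the upper bound on $\phi_i$, the lower bound $\Lambda_i\ge R_iM_i^{-(p_i-1)/2}$ and $M_i^{\tau_i}\to1$. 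Using the convergence of structures \eqref{ConvergenzaStrutture} and the uniform Schauder theory of Theorem \ref{TeoremaRegolarita}, I extract a limit $\phi$ that is positive and $\cerchio{\Delta}_b$-harmonic on $B_1\setminus\{0\}$, hence admits a Bôcher-type representation $\phi(y)=a|y|^{-(Q-2)}+h(y)$ with $h\ge0$ harmonic across $0$. The shrinking bubble pins the singular coefficient at $a=c_*>0$ (again through $M_i^{\tau_i}\to1$ and the fundamental solution \eqref{SoluzioneFondamentale}), while the saturation $\phi(y_0)=C_0>c_*$ at the limit point $y_0$ forces $h\not\equiv0$, so $h(0)>0$ by the strong maximum principle.

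Finally I would read off the contradiction with Definition \ref{DefinizionePuntoIsolatoSemplice}. Since $\overline\phi(r)\sim a\,r^{-(Q-2)}\to+\infty$ as $r\to0$ while $\overline\phi(r)\to h(0)>0$ once the singular part becomes negligible, the function $r^{2/(p_i-1)}\overline\phi(r)$ decreases for small $r$ and increases near $r=1$, so it has an interior critical point $r^*\in(0,1)$. Transplanting through $\delta_{\Lambda_i}$, the function $\overline w_i$ then acquires a critical point at a scale $\sim\Lambda_i r^*\gg R_iM_i^{-(p_i-1)/2}$, distinct from the one sitting at the bubble scale; thus $\overline w_i$ has at least two critical points in $(0,\rho)$ for $i$ large, contradicting simplicity and proving $M_i\overline u_i(\sigma)\le C$. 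I expect the main obstacle to be the control of the limit across the long neck $R_iM_i^{-(p_i-1)/2}\le d(x,x_i)\le\Lambda_i$: one must check that the nonlinear term is uniformly negligible up to the breakdown scale, so that $\phi$ is exactly harmonic, and, above all, pin down the regular part — both $h\ge0$ and $h(0)>0$ — on which the whole spurious critical point depends; this is delicate in the CR setting because, as remarked after Definition \ref{DefinizionePuntoIsolatoSemplice}, the Korányi surface measures lack the homogeneity one uses freely in the Euclidean case.
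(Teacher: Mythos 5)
Your strategy is genuinely different from the paper's. The paper does not try to prove the sharp pointwise rate first: it sets $w_i=u_i(x_\sigma)^{-1}u_i$, uses Lemma \ref{LemmaStimaPuntiIsolatiSemplici} and Harnack to get $w_i\to w$ with $L_\theta w=0$ away from $\overline x$, uses the monotonicity of $r\overline w(r)$ (coming from simplicity and Proposition \ref{BlowUpIsolatiBolle}) to force $w$ to be singular, and then compares the resulting positive flux $-\int_{\de B_\sigma}\nabla^H w\cdot\ni=c+o(1)>0$ with the bound $-4\int_{B_\sigma}\Delta_b w_i\le u_i(x_\sigma)^{-1}\int_{B_\sigma}u_i^{p_i}\lesssim u_i(x_\sigma)^{-1}M_i^{-1}$ obtained by integrating the equation and splitting the integral at the bubble scale. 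This gives $M_iu_i(x_\sigma)\le C$ in a few lines, using only the lossy estimate of Lemma \ref{LemmaStimaPuntiIsolatiSemplici}; the sharp rate $M_iu_i\lesssim d^{-2}$ is only obtained afterwards, in Proposition \ref{LimiteBlowUp}, precisely by invoking this lemma at a rescaled level. Your plan inverts this order and aims at the sharp rate directly via a first-breakdown-scale argument.

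The inversion is not merely a matter of taste: the final contradiction you propose has a genuine gap, and you have in fact pointed at it yourself. After blowing up at the touching scale $\Lambda_i$ you obtain a limit $\phi=a|y|^{-2}+h$ on a \emph{bounded} punctured ball, and you need both $h\ge 0$ and $h(0)>0$. The Bôcher-type decomposition (Proposition 9.1 in \cite{LZhu}) gives that $h$ extends as a solution across $0$, but gives no sign: on a bounded punctured domain positivity of $\phi$ does not force $h\ge 0$ (e.g. $a|y|^{-2}-\e$ is positive near the singularity). The analogous step in this paper (Proposition \ref{BlowUpIsolatiSonoSemplici}) only works because there the domain is all of $\H^1$, so the singular part vanishes at infinity and $\liminf_{x\to\infty}b\ge 0$; that mechanism is unavailable here. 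Moreover, the only quantitative information you have on $h$ is $h(y_0)=C_0-c_*$ at a single touching point with $|y_0|=1$, which lies on the boundary of the region where the $C^2_{\mathrm{loc}}$ convergence holds (so you must in any case run the construction on a larger ball), and a lower bound at one point does not yield the lower bound for $\overline h$ near $r=1$ that you need in order for $r\overline\phi(r)$ to turn around and produce an interior critical point: $\frac{d}{dr}\left(r\overline\phi(r)\right)=-aC r^{-2}+\frac{d}{dr}\left(r\overline h(r)\right)$ is negative for small $r$, and making it nonnegative before $r=1$ requires $h\gtrsim a$ on the whole sphere, not at one point. Until $h\ge 0$ (hence Harnack for $h$) and the quantitative comparison with $a=c_*$ through the choice of $C_0$ are actually established, the second critical point of $\overline w_i$ — and with it the contradiction with Definition \ref{DefinizionePuntoIsolatoSemplice} — does not follow. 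The paper's flux argument avoids all of this.
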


\begin{proof} 
 Thanks to Lemma \ref{Harnack}, it is sufficient to prove the thesis for $\sigma$ small enough. In particular, reasoning as in the proof of Proposition \ref{BlowUpIsolatiBolle}, we can suppose that $R>0$.
 
 Let $x_{\sigma}$ such that $d(x_{\sigma},x_i)=\sigma$, and define
 Let $w_i(x)=u_i(x_{\sigma})^{-1}u_i(x)$.
 Then $w_i$ satisfies
 \begin{equation}\label{EquazioneDimostrazioneLemma}
  L_{\theta}w_i = u_i(x_{\sigma})^{p_i-1}w_i^{p_i}.
 \end{equation}
 Thanks to Lemma \ref{LemmaStimaPuntiIsolatiSemplici} and to Harnack inequality, for every compact $K\subset B_{\rho_1}(\overline{x})\setminus\{\overline{x}\}$ there exists $C_K$ such that $C_K^{-1}\le w_i\le C_K$. Therefore, applying the regularity theory from Theorem \ref{TeoremaRegolarita}, we can deduce that up to subsequences $w_i\to w$ in $C^2_{\mathrm{loc}}(B_{\rho_1}(\overline{x})\setminus\{\overline{x}\})$, and since, by Lemma \ref{LemmaStimaPuntiIsolatiSemplici}, $u_i(x_{\sigma})\to 0$, passing to the limit equation \eqref{EquazioneDimostrazioneLemma} one gets that $L_{\theta}w=0$.
 
 Since the blow-up is isolated simple, and since Proposition \ref{BlowUpIsolatiBolle} implies that $r^{\frac{2}{p_i-1}}\overline{u}_i$ has a critical point in $(0,R_iM_i^{-\frac{p_i-1}{2}})$ after which it is decreasing, $r^{\frac{2}{p_i-1}}\overline{u}_i$ is decreasing in $(R_iM_i^{-\frac{p_i-1}{2}},\rho)$, and because
 $$u_i(x_{\sigma})^{-1}r^{\frac{2}{p_i-1}}\overline{u}_i(r)= r^{\frac{2}{p_i-1}}\overline{w}_i(r)\to r\overline{w}(r),$$
 $r\overline{w}(r)$ is decreasing on $(0,\rho)$.
 Since $w>0$, $w$ must be singular at $\overline{x}$.
 Corollary 9.1 in \cite{LZhu} can be extend to pseudohermitian geometry repeating the proof with minor adaptations; applying it we get that
 \begin{equation}\label{FormulaDimLemma1}
  -\int_{B_{\sigma}(x_i)}\Delta_bw_i = -\int_{\de B_{\sigma}(x_i)}\nabla^Hw_i\cdot\ni = -\int_{\de B_{\sigma}(\overline{x})}\nabla^Hw\cdot\ni + o(1) = c+o(1) > 0
 \end{equation}
 while integrating equation \eqref{EquazioneDimostrazioneLemma} we get
 \begin{equation}\label{FormulaDimLemma2}
  -4\int_{B_{\sigma}(x_i)}\Delta_bw_i = \int_{B_{\sigma}(x_i)}\left(-Rw_i + u_i(x_{\sigma})^{p_i-1}w_i^{p_i}\right) \le u_i(x_{\sigma})^{-1}\int_{B_{\sigma}(x_i)}u_i^{p_i}.
 \end{equation}
 But, calling $r_i= R_iM_i^{-\frac{p_i-1}{2}}$, by Lemmas \ref{BlowUpIsolatiBolle}, \ref{LemmaStimaPuntiIsolatiSemplici} and \ref{StimaTau} eventually
 $$\int_{B_{\sigma}(x_i)}u_i^{p_i} = \left(\int_{B_{r_i}(x_i)} +\int_{B_{\sigma}(x_i)\setminus B_{r_i}(x_i)}\right)u_i^{p_i} \lesssim$$
 $$\lesssim M^{-2(p_i-1)}M_i^{p_i}\int_{B_{R_i}(0)}(1+|x|)^{-2p_i}+ M_i^{-\lambda_ip_i}\int_{B_{\sigma}(x_i)\setminus B_{r_i}(x_i)}|x|^{(-2+\delta)p_i} \lesssim$$
 \begin{equation}\label{FormulaDimLemma3}
  \lesssim M_i^{2-p_i} + M_i^{-\lambda_ip_i}(R_iM_i^{-\frac{p_i-1}{2}})^{(-2+\delta)p_i +4} \lesssim M_i^{-1}.
 \end{equation}
 Putting together formulas \eqref{FormulaDimLemma1}, \eqref{FormulaDimLemma2} and \eqref{FormulaDimLemma3} we get that $u_i(x_i)u_i(x_{\sigma})$ is a bounded sequence. The thesis follows from Lemma \ref{Harnack}.
\end{proof}

\begin{proposizione}\label{LimiteBlowUp}
 If $\overline{x}$ is an isolated simple blow-up point then there exists $C$ such that
 $$M_iu_i(x)\le C d(x,x_i)^{-2}$$
 if $d(x,x_i)\le\frac{\rho}{2}$.
 Furthermore, up to subsequences, there exists $a>0$ such that
 $$M_iu(x)\to aG_{\overline{x}}(x) + b$$
 in $C^2_{\mathrm{loc}}(B_{\frac{\rho}{2}}(\overline{x})\setminus\{\overline{x}\})$, where $G_{\overline{x}}$ is the Green function of $L_{\theta}$ (which exists because $M$ has positive CR Yamabe class)
 %with zero boundary conditions on $B_{\frac{\rho}{2}}$ (METTERE LA FUNZIONE DI GREEN GLOBALE?)
 and $L_{\theta}b=0$ on $B_{\frac{\rho}{2}}(\overline{x})$.
\end{proposizione}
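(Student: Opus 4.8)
The plan is to split the punctured ball $\{0<d(x,x_i)\le\rho/2\}$ into the inner \emph{bubble} region $d(x,x_i)\le r_i:=R_iM_i^{-\frac{p_i-1}{2}}$ and the outer \emph{neck} region $r_i\le d(x,x_i)\le\rho/2$, establish the sharp bound $M_iu_i(x)\le Cd(x,x_i)^{-2}$ on each, and then extract the limit and identify it with $aG_{\overline{x}}+b$. Setting $\Phi_i=M_iu_i$, one checks from \eqref{EquazioneCapitolo4} that $\Phi_i$ solves the \emph{linear} equation $L_{\theta}\Phi_i=\widetilde{R}\,u_i^{p_i-1}\Phi_i$, i.e. $(-4\Delta_b+R-\widetilde{R}u_i^{p_i-1})\Phi_i=0$. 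On the inner region I would use Proposition \ref{BlowUpIsolatiBolle}: since $v_i\to U\circ\delta_{\widetilde{R}(0)^{1/2}}$ in $C^2_{\mathrm{loc}}$ and $U(z,t)\sim c\,|(z,t)|^{-2}$ at infinity, writing $\Phi_i(x)=M_i^2 v_i(\delta_{M_i^{(p_i-1)/2}}(x_i^{-1}x))\le CM_i^2(1+|y|)^{-2}$ with $|y|\approx M_i^{\frac{p_i-1}{2}}d(x,x_i)$ and using $M_i^{\tau_i}\to1$ from Lemma \ref{StimaTau} gives exactly $\Phi_i(x)\le CM_i^{\tau_i}d(x,x_i)^{-2}\le Cd(x,x_i)^{-2}$ for $d(x,x_i)\le r_i$.

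The heart of the matter is the neck. Here the combination of Lemma \ref{LemmaStimaPuntiIsolatiSemplici} and Lemma \ref{StimaTau} yields the uniform smallness of the scaled potential, $\eta_i:=\sup_{r_i\le d(x,x_i)\le\rho/2}\widetilde{R}u_i^{p_i-1}\,d(x,x_i)^2\to0$, so that on the neck $\Phi_i$ is an almost $L_{\theta}$-harmonic function. I would then compare $\Phi_i$, through the maximum principle for the subelliptic operator $-4\Delta_b+R-\widetilde{R}u_i^{p_i-1}$ (whose validity on the annulus follows from $\eta_i\to0$), with a multiple of the Green function of this operator, whose leading singularity is $\gamma_1 d(x,x_i)^{-2}$; matching boundary values on $\partial B_{r_i}$ (where $\Phi_i\le Cr_i^{-2}$ by the inner estimate) and on $\partial B_{\rho/2}$ (where $\Phi_i\le C$ by Lemma \ref{LemmaCSigma}) gives $\Phi_i\le Cd(x,x_i)^{-2}$ throughout. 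The isolated \emph{simple} hypothesis (Definition \ref{DefinizionePuntoIsolatoSemplice}), via the monotonicity of $\overline{w}_i$, is what forbids a second concentration in the neck and lets one remove the $\delta$-loss still present in Lemma \ref{LemmaStimaPuntiIsolatiSemplici}.

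The step I expect to be the main obstacle is precisely this sharp upper bound on the neck, because the barriers used in the Riemannian case do not transfer directly: for the Korányi norm $N$ one has $\Delta_b N^{-a}=a(a-2)N^{-a-2}|\nabla^H N|^2$ with $|\nabla^H N|^2=|z|^2/N^2$, a factor that degenerates on the $t$-axis $\{z=0\}$, so Korányi-power supersolutions fail near the center. This forces one to argue through the Green function of the perturbed operator together with the subelliptic Harnack inequality of Theorem \ref{TeoremaRegolarita}, rather than with explicit power comparison functions.

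Once $M_iu_i\le Cd(x,x_i)^{-2}$ is available, on every compact subset of $B_{\rho/2}(\overline{x})\setminus\{\overline{x}\}$ the $\Phi_i$ are uniformly bounded and solve $L_{\theta}\Phi_i=\widetilde{R}u_i^{p_i-1}\Phi_i$, whose right-hand side tends to $0$ locally uniformly since $u_i\to0$ away from $\overline{x}$ (Lemma \ref{LemmaStimaPuntiIsolatiSemplici}). By the regularity theory of Theorem \ref{TeoremaRegolarita} and Harnack, up to a subsequence $\Phi_i\to\Phi$ in $C^2_{\mathrm{loc}}(B_{\rho/2}(\overline{x})\setminus\{\overline{x}\})$ with $L_{\theta}\Phi=0$, $\Phi\ge0$ and $\Phi\le Cd(x,\overline{x})^{-2}$. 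A Bôcher-type argument then lets one write $\Phi=aG_{\overline{x}}+b$, where $a$ is the coefficient of the $d^{-2}$-singularity and $b:=\Phi-aG_{\overline{x}}$ is a bounded, hence (the point having zero capacity) removable, solution of $L_{\theta}b=0$ on $B_{\rho/2}(\overline{x})$. Finally I would show $a>0$ by a flux computation: integrating the equation on $B_\sigma(x_i)$ and applying the divergence theorem,
$$\int_{\partial B_\sigma}(-4\nabla^H\Phi_i\cdot\ni)+\int_{B_\sigma}R\Phi_i=\int_{B_\sigma}\widetilde{R}u_i^{p_i-1}\Phi_i=M_i\int_{B_\sigma}\widetilde{R}u_i^{p_i},$$
and the right-hand side concentrates at the bubble and converges, via Proposition \ref{BlowUpIsolatiBolle} and $M_i^{\tau_i}\to1$, to a positive constant proportional to $\widetilde{R}(0)\int_{\H^1}U^{3}<\infty$. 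Letting $i\to\infty$ for fixed small $\sigma$ and then $\sigma\to0$, and comparing with the normalization $\int_{\partial B_\sigma}(-4\nabla^H G_{\overline{x}}\cdot\ni)\to1$ (which follows from $L_{\theta}G_{\overline{x}}=\delta_{\overline{x}}$ and the integrability of $G_{\overline{x}}$), identifies $a$ with this positive constant, whence $a>0$.
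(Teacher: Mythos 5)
Your decomposition into bubble and neck, and the second half of your argument (local boundedness away from $\overline{x}$, subelliptic regularity, the B\^ocher-type decomposition $\Phi=aG_{\overline{x}}+b$, and the flux computation for $a>0$) are sound and essentially coincide with the paper, which likewise reduces $a>0$ to showing the limit is singular via the flux argument already used in Lemma \ref{LemmaCSigma}. The problem is the step you yourself flag as the main obstacle: the sharp bound on the neck. Your proposed mechanism --- the maximum principle for $-4\Delta_b+R-\widetilde{R}u_i^{p_i-1}$ applied to a comparison function of the form $\alpha G+\beta$ with $G$ singular like $d(x,x_i)^{-2}$ --- does not work as stated. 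If $G$ is (a truncation of) the Green function of $L_\theta$, then on the annulus $\bigl(-4\Delta_b+R-\widetilde{R}u_i^{p_i-1}\bigr)G=-\widetilde{R}u_i^{p_i-1}G<0$, and similarly a positive constant $\beta$ need not be a supersolution; so $\alpha G+\beta$ is a \emph{subsolution} of the relevant operator and cannot be used as an upper barrier. Taking instead the Green function of the perturbed operator itself on the whole ball is not an option either, since the potential $\widetilde{R}u_i^{p_i-1}$ is of size $M_i^{p_i-1}$ near $x_i$ and the operator need not be coercive there, so that object is not uniformly controlled. The corrections needed to turn $d^{-2}$ into a genuine supersolution of the perturbed operator are exactly what produce the $\delta$-loss of Lemma \ref{LemmaStimaPuntiIsolatiSemplici}; your argument as written does not explain how the loss is removed.

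The paper removes it by a different device, which you have all the ingredients for but do not deploy: argue by contradiction. If $M_iu_i(\widetilde{x}_i)\,d(x_i,\widetilde{x}_i)^2\to\infty$ for some $\widetilde{x}_i$, set $\widetilde{r}_i=d(x_i,\widetilde{x}_i)$ (which lies in $[R_iM_i^{-(p_i-1)/2},\rho]$ by Proposition \ref{BlowUpIsolatiBolle} and Lemma \ref{StimaTau}), rescale $\widetilde{u}_i=\widetilde{r}_i^{\,2/(p_i-1)}u_i\circ\delta_{\widetilde{r}_i}\circ L_{x_i}$, and observe that $\widetilde{u}_i$ again satisfies the hypotheses of Lemma \ref{LemmaCSigma}; that lemma then gives $\widetilde{u}_i(0)\max_{\de B_1}\widetilde{u}_i\le C$, which, unwound using $M_i^{\tau_i}\to1$, says precisely $M_iu_i(\widetilde{x}_i)\widetilde{r}_i^{\,2}\le C$ --- a contradiction. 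In other words, Lemma \ref{LemmaCSigma} must be applied at the variable scale $\widetilde{r}_i$ where the estimate allegedly fails, not only at the fixed outer scale $\rho/2$ as in your proposal. Alternatively, a Green's \emph{representation} formula (rather than a comparison principle) combined with the $\delta$-loss bound can be made to close, but that is a different argument from the one you wrote. I recommend replacing the neck step with the rescaling contradiction.
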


\begin{proof}
 If this were not the case, then, up to subsequences, there would exist a sequence $\widetilde{x}_i$ with
 \begin{equation}\label{EquazioneDimLimiteBlowUp}
  M_iu_i(\widetilde{x}_i)d(x_i,\widetilde{x}_i)^2 \to\infty.
 \end{equation}
 Define $\widetilde{r}_i=d(x_i,\widetilde{x}_i)$.
 
 Since $\sup_{\lambda>0}\lambda^2U(\delta_{\lambda}(x))\le\frac{C}{|x|^2}$,
 it is easy to verify using
 Proposition \ref{BlowUpIsolatiBolle} and Lemma \ref{StimaTau} that $R_iM_i^{-\frac{p_i-1}{2}}\le\widetilde{r}_i\le\rho$.
 
 Define $\widetilde{u}_i=\widetilde{r}_i^{\frac{2}{p_i-1}}u_i\circ\delta_{\widetilde{r}_i}\circ L_{x_i}$ in $B_2$.
 $\widetilde{u}_i$ satisfies
 $$L_{\theta_i}\widetilde{u}_i = \widetilde{R}\widetilde{u}_i^{p_i}$$
 and verifies the hypotheses of Lemma \ref{LemmaCSigma}, therefore $\max_{\de B_1}\widetilde{u}_i(0)\widetilde{u}_i <\infty$. Using the definition of $\widetilde{u}_i$ and Lemma \ref{StimaTau}, this goes in contradiction with formula \eqref{EquazioneDimLimiteBlowUp}.
 
 Hence $M_iu_i$ is locally bounded in $B_{\frac{\rho}{2}}(\overline{x})\setminus\{\overline{x}\}$, and satisfies
 $$L_{\theta}(M_iu_i) = M_i^{1-p_i}(M_iu_i)^{p_i},$$
 therefore, applying regularity theory (see Theorem \ref{TeoremaRegolarita}),
 $$M_iu_i\to v \;\;\;\text{in}\;\;\; C^2_{\mathrm{loc}}(B_{\frac{\rho}{2}}(\overline{x})\setminus\{\overline{x}\})$$
 with $v$ satisfying $L_{\theta}v=0$.
 Known results about singular solutions (see for example Proposition 9.1 in \cite{LZhu}, which can adapted without difficulty to pseudohermitian geometry) imply the rest of the thesis, except the fact that $a>0$.
 This can be proved by proving that $v$ must be singular with the same proof of Lemma \ref{LemmaCSigma}.
\end{proof}

Using Proposition \ref{LimiteBlowUp} and Lemma \ref{StimaTau} we can improve the estimates of Lemma \ref{LemmaStimaPuntiIsolatiSemplici} and Lemma \ref{StimaSublaplacianoCoordinate} with the same proofs (which we do not repeat). We state only the estimates that we will need. The last statement of the following lemma can be proved by repeating the proof of Lemma \ref{StimaSublaplacianoCoordinate} using the estimates for CR normal coordinates (see Proposition A.5 in \cite{CMY1}).

\begin{lemma}\label{LemmaStime}
 If $\overline{x}$ is an isolated blow-up point then, with the notation of Lemma \ref{LemmaStimaPuntiIsolatiSemplici} there exists $\rho_1\in(0,\rho)$ such that
 $$u_i(x)\le C M_i^{-1}d(x,x_i)^{-2},$$
 $$|(u_i)_{,1}(x)|\le C M_i^{-1}d(x,x_i)^{-3},$$
% $$|(u_i)_{,11}(x)|\le C M_i^{-1}d(x,x_i)^{-4}$$
% $$|(u_i)_{,1\con{1}}(x)|\le C M_i^{-1}d(x,x_i)^{-4},$$
% $$|(u_i)_{,0}(x)|\le C M_i^{-1}d(x,x_i)^{-4}$$
% $$|(u_i)_{,01}(x)|\le C M_i^{-1}d(x,x_i)^{-5},$$
% $$|(u_i)_{,00}(x)|\le C M_i^{-1}d(x,x_i)^{-6},$$
% on $B_{\rho_1M_i^{\frac{p_i-1}{2}}}(0)$ the following estimates hold:
% $$v_i(x)\le C(1+|x|)^{-2},$$
% $$|(v_i)_{,1}(x)|\le C(1+|x|)^{-3},$$
% $$|(v_i)_{,11}(x)|\le C(1+|x|)^{-4},$$
% $$|(v_i)_{,1\con{1}}(x)|\le C(1+|x|)^{-4},$$
% $$|(v_i)_{,0}(x)|\le C(1+|x|)^{-4},$$
% $$|(v_i)_{,01}(x)|\le C(1+|x|)^{-5},$$
% $$|(v_i)_{,00}(x)|\le C(1+|x|)^{-6}.$$
% \rosso{(METTERE SOLO $d$ o solo $|x|$)}
 for $R_iM_i^{-\frac{p_i-1}{2}}\le d(x,x_i)\le\rho_1$,
 %and in pseudohermitian normal coordinates around $x_i$, for $\sigma\le\rho_1$,
 %$$\left|\int_{B_{\rho_1}(x_i)}\left(u_i +\Xi u_i\right)(\cerchio{\Delta}_bu_i -\Delta_b u_i)\right| \le C\sigma M_i^{-2}\log M_i$$
 and in CR normal coordinates around $x_i$, for $\sigma\le\rho_1$,
 $$\left|\int_{B_{\sigma}(x_i)}\left(u_i +\Xi u_i\right)(\cerchio{\Delta}_bu_i -\Delta_b u_i)\right| \le C\sigma M_i^{-2}.$$
\end{lemma}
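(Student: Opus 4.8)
The plan is to bootstrap the optimal decay of Proposition \ref{LimiteBlowUp} into derivative bounds via the rescaling argument already used in Lemma \ref{LemmaStimaPuntiIsolatiSemplici}, and then to rerun the computation of Lemma \ref{StimaSublaplacianoCoordinate} in CR normal coordinates.

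The bound $u_i(x)\le CM_i^{-1}d(x,x_i)^{-2}$ is precisely the first assertion of Proposition \ref{LimiteBlowUp}. For the derivative bounds I would fix $x$ in the stated range, set $r=d(x,x_i)$, and rescale around $x_i$ by $\hat u_i=r^{2/(p_i-1)}u_i\circ\delta_r$ in pseudohermitian normal coordinates, considering $\hat u_i$ on the fixed annulus $\{\frac13<|y|<3\}$, which (after shrinking $\rho_1$) lies in the region $d\le\rho/2$ where the pointwise bound holds. There $\hat u_i$ solves $L_{\theta_r}\hat u_i=(\widetilde R\circ\delta_r)\hat u_i^{p_i}$ for a structure converging to $\H^1$ by \eqref{ConvergenzaStrutture}, and the pointwise bound gives $\hat u_i\lesssim M_i^{-1}r^{2/(p_i-1)-2}$ on this annulus. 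The subelliptic estimates of Theorem \ref{TeoremaRegolarita}, with constants uniform along the converging structures, then bound every derivative of $\hat u_i$ on $\{\frac12<|y|<2\}$ by the same quantity. Undoing the dilation, a derivative of weighted degree $m$ (a horizontal derivative counting as $1$ and $T$ as $2$) produces a factor $r^{-m}$ beyond the $r^{-2/(p_i-1)}$ from the prefactor, so that $|(u_i)_{,1}(x)|\lesssim M_i^{-1}r^{-3}$ and, in the same way, $|(u_i)_{,0}|,|(u_i)_{,11}|,|(u_i)_{,1\con{1}}|\lesssim M_i^{-1}r^{-4}$, $|(u_i)_{,01}|\lesssim M_i^{-1}r^{-5}$ and $|(u_i)_{,00}|\lesssim M_i^{-1}r^{-6}$. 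These are the estimates of Lemma \ref{LemmaStimaPuntiIsolatiSemplici} with the $\delta$-losses removed; I record only the first two but will use all of them below.

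For the integral estimate I would repeat the proof of Lemma \ref{StimaSublaplacianoCoordinate} word for word, now in CR normal coordinates. Rescaling to $v_i=M_i^{-1}u_i\circ\delta_{M_i^{-(p_i-1)/2}}$, the sharpened derivative estimates give $|v_i+\Xi v_i|\lesssim(1+|x|)^{-2}$ and the analogously sharpened bounds on the derivatives of $v_i$, now free of the factor $M_i^{(p_i-1)\delta/2}$. The only other input is the estimate of $\cerchio{\Delta}_bu_i-\Delta_bu_i$: here I would replace the pseudohermitian expansion of Lemma \ref{DisuguaglianzaSublaplaciano} by its CR-normal-coordinate analogue, Proposition A.5 in \cite{CMY1}. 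Since CR normal coordinates osculate the flat Heisenberg structure to higher order, the error terms in $\cerchio{\Delta}_bu_i-\Delta_bu_i$ vanish faster at the origin, each picking up an extra power of $|x|$, so that after rescaling the leading contribution to the integral is integrable rather than logarithmically divergent. Tracking the powers of $M_i$ and $\sigma$ through the rescaling then turns the old bound $CM_i^{-2+2\delta+o(1)}$ into $C\sigma M_i^{-2}$, the factor $\sigma$ appearing because the now-convergent rescaled integrand is integrated over $B_{\sigma M_i^{(p_i-1)/2}}(0)$.

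The hard part is the sharpness on both fronts. Removing the $M_i^{\delta}$ and $d^{\delta}$ losses in the first estimates requires seeding the bootstrap with the optimal rate of Proposition \ref{LimiteBlowUp} (together with $M_i^{\tau_i}\to1$ from Lemma \ref{StimaTau}) instead of with the weaker Lemma \ref{LemmaStimaPuntiIsolatiSemplici}, and invoking the uniform-constant clause of Theorem \ref{TeoremaRegolarita} for the structures in \eqref{ConvergenzaStrutture}. Eliminating the $\log M_i$ and gaining the factor $\sigma$ in the integral estimate hinges entirely on the extra flatness of CR normal coordinates, so the delicate point is to confirm that Proposition A.5 in \cite{CMY1} supplies exactly the higher-order vanishing of the error terms needed to render the leading contribution convergent; once that is secured, the bookkeeping of the scaling exponents is routine.
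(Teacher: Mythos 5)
Your proposal is correct and follows essentially the same route as the paper, whose proof of this lemma consists precisely of the remark that the arguments of Lemma \ref{LemmaStimaPuntiIsolatiSemplici} and Lemma \ref{StimaSublaplacianoCoordinate} can be rerun with the sharp decay of Proposition \ref{LimiteBlowUp} and Lemma \ref{StimaTau} as input, and that the last estimate follows by redoing Lemma \ref{StimaSublaplacianoCoordinate} in CR normal coordinates via Proposition A.5 of \cite{CMY1}. The only slight imprecision is in your heuristic for the factor $\sigma$: the rescaled integral does not become convergent but grows linearly in the radius $\sigma M_i^{(p_i-1)/2}$, and the factor $\sigma$ survives because the extra power of $|x|$ in the CR-normal-coordinate error also contributes a compensating $M_i^{-(p_i-1)/2}$ to the prefactor — exactly the routine exponent bookkeeping you defer.
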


From now on we will focus on the case of constant $\widetilde{R}$, and by homogeneity we can suppose that $\widetilde{R}\equiv 1$. So the equation becomes
\begin{equation}\label{EquazioneRCostante}
 L_{\theta}u_i = u_i^{p_i}.
\end{equation}

\begin{osservazione}\label{OsservazioneCoordinateCR}
 In the following we will need to use CR normal coordinates, which involve a change of contact form.
 If $\phi$ is such that $\widetilde{\theta}=\phi^2\theta$ defines CR normal coordinates around some point, and $\widetilde{u}_i=\frac{u_i}{\phi}$, then, thanks to formula \eqref{LeggeTrasformazioneSublaplaciano}, $\widetilde{u}_i$ verifies
% $$L_{\widetilde{\theta}}\widetilde{u}_i = \phi^{-\tau_i}\widetilde{R}\widetilde{u}_i^{p_i}.$$
 $$L_{\widetilde{\theta}}\widetilde{u}_i = \phi^{-\tau_i}\widetilde{u}_i^{p_i}.$$
 It is easy to prove that the estimates proved until now for $u_i$, in particular the ones in Proposition \ref{BlowUpIsolatiBolle}, Proposition \ref{LimiteBlowUp} and Lemma \ref{LemmaStime}, hold for $\widetilde{u}_i$ too.
 It is also easy to prove that a point $\overline{x}$ is an isolated blow-up point for the sequence $u_i$ if and only if it is one for the sequence $\widetilde{u}_i$. We will say that $\overline{x}$ is an isolated blow-up point in CR normal coordinates if it is an isolated blow-up point for $\widetilde{u}_i$ with respect to the contact form $\widetilde{\theta}$.
 
 At some point we will need to consider CR coordinates around $x_i$ where $x_i\to\overline{x}$, defined by forms $\widetilde{\theta}_i=\phi_i^2\theta$. In such case $\widetilde{u}_i=\frac{u_i}{\phi_i}$ verifies the same estimates for analogous reasons because $\phi_i\to\phi$ smoothly on compact sets, because their construction involves only algebraic operations with objects depending smoothly on the base point (see \cite{JL3}).
\end{osservazione}

\begin{lemma}\label{SegnoTermineOrdineZero}
 If $\overline{x}$ is an isolated simple blow-up point for equation \eqref{EquazioneRCostante} and $M_iu_i\to h$ in $C^2_{\mathrm{loc}}(B_r(\overline{x})\setminus\{\overline{x}\})$, and in CR normal coordinates around $\overline{x}$ one has $h(x)=\frac{a}{|x|^2}+ A + o(1)$ with $a>0$, then $A\le 0$.
\end{lemma}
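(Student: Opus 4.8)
The plan is to apply the Pohozaev identity of Proposition \ref{PohozaevProp} to $u_i$ on a small ball $B_\sigma(x_i)$, multiply the whole identity by $M_i^2$, and let first $i\to\infty$ and then $\sigma\to 0$. The guiding principle is that the subcritical exponent produces an interior term of a definite sign, while the boundary term reproduces the constant $A$ in the expansion of $h$ with the opposite sign, and the two together force $A\le 0$.

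First I would work in CR normal coordinates around $x_i$, as in Remark \ref{OsservazioneCoordinateCR}, where equation \eqref{EquazioneRCostante} becomes $L_{\widetilde\theta}\widetilde u_i=\phi^{-\tau_i}\widetilde u_i^{p_i}$ and, crucially, the Webster curvature $R$ vanishes to high order at the center (see \cite{JL3}). Writing Proposition \ref{PohozaevProp} with $Q=4$, $b_1=4$ and $\widetilde R=\phi^{-\tau_i}$, the leading interior coefficient is $\frac{1}{b_1}\left(\frac{Q}{p_i+1}-\frac{Q-2}{2}\right)=\frac14\frac{\tau_i}{p_i+1}$, so that, since $\tau_i=3-p_i\ge 0$, $\widetilde R>0$ and $u_i>0$, the corresponding term $\frac14\frac{\tau_i}{p_i+1}\int_{B_\sigma}\phi^{-\tau_i}u_i^{p_i+1}$ is nonnegative. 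This nonnegativity is the only information I need from it; together with $\tau_i\to 0$ from Lemma \ref{StimaTau} it will also make the spurious term $M_i^2\int\Xi(\phi^{-\tau_i})u_i^{p_i+1}$ negligible, using $\Xi\phi=O(|x|^2)$.

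Next I would check that, after multiplication by $M_i^2$, every remaining term is negligible except the boundary $\ci{B}$-term. The curvature contributions $M_i^2\int Ru_i^2$ and $M_i^2\int\Xi(R)u_i^2$ vanish because $R=O(|x|^\ell)$ in CR normal coordinates, which supplies the extra decay needed both in the concentration region (via Proposition \ref{BlowUpIsolatiBolle} and $R_i/\log M_i\to0$) and in the tail (via the pointwise bounds of Lemma \ref{LemmaStime}). The mixed term is handled directly by the last estimate of Lemma \ref{LemmaStime}, which gives $M_i^2\left|\int_{B_\sigma}(u_i+\Xi u_i)(\cerchio{\Delta}_b-\Delta_b)u_i\right|\le C\sigma$, uniformly in $i$. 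On the boundary the power and curvature pieces vanish since $M_i^2u_i^{p_i+1}\sim M_i^{1-p_i}h^{p_i+1}\to0$ and $R=O(\sigma^\ell)$, whereas $M_i^2\ci{B}(x,u_i,\nabla^Hu_i)=\ci{B}(x,M_iu_i,\nabla^H(M_iu_i))\to\ci{B}(x,h,\nabla^Hh)$ by the $C^2_{\mathrm{loc}}$ convergence $M_iu_i\to h$ (recall $x_i\to\overline x$). Collecting these facts, for each fixed $\sigma$ the identity passes to the limit $i\to\infty$ and, using the nonnegativity of the interior term and the bound $C\sigma$ for the mixed term, yields $\int_{\partial B_\sigma}\ci{B}(x,h,\nabla^Hh)\ge -C\sigma$. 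Finally I would let $\sigma\to0$: since $h=\frac{a}{|x|^2}+A+o(1)$, Proposition \ref{LimiteB} applied to $\frac{\gamma_n}{a}h$ (together with the fact that $\ci{B}$ is quadratic) gives $\int_{\partial B_\sigma}\ci{B}(x,h,\nabla^Hh)\to-\frac{c_na}{\gamma_n}A$, and as $a>0$ the coefficient is strictly positive, so $-\frac{c_na}{\gamma_n}A\ge0$, that is $A\le0$.

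I expect the main obstacle to be ensuring that the $M_i^2$-rescalings of the curvature and sublaplacian-error terms genuinely vanish: this is exactly where the vanishing of $R$ at the center in CR normal coordinates and the sharp estimate of Lemma \ref{LemmaStime} are indispensable, much as conformal normal coordinates are used in the Riemannian argument. A secondary, bookkeeping difficulty is the interchange of the limits $i\to\infty$ and $\sigma\to0$, which I would handle by keeping $\sigma$ fixed while sending $i\to\infty$ and only afterwards letting $\sigma\to0$, exploiting that the bound $C\sigma$ for the mixed term is uniform in $i$.
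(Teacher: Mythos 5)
Your proposal is correct and follows essentially the same route as the paper: apply the Pohozaev identity of Proposition \ref{PohozaevProp} in CR normal coordinates, use the sign of the subcritical term, kill the curvature and sublaplacian-error terms via $R=O(|x|^2)$ and the last estimate of Lemma \ref{LemmaStime} after multiplying by $M_i^2$ (taking $\limsup_{i\to\infty}$ first and $\sigma\to 0$ after), and identify the surviving boundary term as $-\tfrac{c_na}{\gamma_n}A$ via Proposition \ref{LimiteB}. The only cosmetic differences are that the paper rescales to the unit ball (applying the identity to $\widetilde u_i\circ\delta_\sigma$ on $B_1$ and multiplying by $\sigma^3M_i^2$, which matches the form in which Proposition \ref{LimiteB} is stated) and that it absorbs the $\Xi(\phi^{-\tau_i})$ term into the subcritical one to get a single nonnegative combination, rather than estimating it separately as you do — both work.
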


\begin{proof} 
 Using the notation introduced in Remark \ref{OsservazioneCoordinateCR} for CR normal coordinates around $\overline{x}$, given $\sigma\in(0,\rho_1)$ the function $\widetilde{u}_i\circ\delta_{\sigma}$ verifies the equation
 $$L_{\theta'_i}(\widetilde{u}_i\circ\delta_{\sigma}) = (\phi\circ\delta_{\sigma})^{-\tau_i}(\widetilde{u}_i\circ\delta_{\sigma})^{p_i}$$
 where $\theta'_i= \frac{1}{\sigma^2}\delta_{1/\sigma}^*\widetilde{\theta}_i$.
 
 Apply the Pohozaev identity to $\widetilde{u}_i\circ\delta_{\sigma}$ on $B_1(\overline{x})$:
 $$\int_{B_1(\overline{x})}\left(\left(\frac{1}{p_i+1}-\frac{1}{4}\right)(\phi\circ\delta_{\sigma})^{-\tau_i}(\widetilde{u}_i\circ\delta_{\sigma})^{p_i+1} -\frac{1}{4}(R\circ\delta_{\sigma})(\widetilde{u}_i\circ\delta_{\sigma})^2+ \right.$$
 $$+\frac{1}{4}\frac{1}{p_i+1}\Xi((\phi\circ\delta_{\sigma})^{-\tau_i})(\widetilde{u}_i\circ\delta_{\sigma})^{p_i+1}-\frac{1}{8}\Xi(R\circ\delta_{\sigma})(\widetilde{u}_i\circ\delta_{\sigma})^2+$$
 $$\left.-\left(\Xi (\widetilde{u}_i\circ\delta_{\sigma})+(\widetilde{u}_i\circ\delta_{\sigma})\right)(\Delta_b(\widetilde{u}_i\circ\delta_{\sigma})-\cerchio{\Delta}_b(\widetilde{u}_i\circ\delta_{\sigma}))\right)d\cerchio{V}=$$
 $$=\int_{\de B_1(\overline{x})}\left(\left(\frac{1}{4}\frac{1}{p_i+1}(\phi\circ\delta_{\sigma})^{-\tau_i}(\widetilde{u}_i\circ\delta_{\sigma})^{p_i+1}-\frac{1}{8}(R\circ\delta_{\sigma})(\widetilde{u}_i\circ\delta_{\sigma})^2  \right)\Xi\cdot\cerchio{\ni} +\right.$$
 \begin{equation}\label{PohozaevTeoremaOrdineZero}
  \left.+\ci{B}(x,\widetilde{u}_i\circ\delta_{\sigma},\cerchio{\nabla}^H(\widetilde{u}_i\circ\delta_{\sigma}))\right)d\cerchio{\sigma}
 \end{equation}
 Since $u_i\circ\delta_{\sigma}=M_iv_i\circ L_{x_1}\circ\delta_{\sigma M_i^{\frac{p_i-1}{2}}}$, $x_i\to\overline{x}$ and $R=O(|x|^2)$ and $R_{,1}=O(|x|)$ (see Appendix A.2 in \cite{CMY1}), and thanks to Lemma \ref{LemmaStime}, we have
% $$\int_{B_1(\overline{x})}(u_i\circ\delta_{\sigma})^2d\cerchio{V} = \sigma^{-4}M_i^{-2(p_i-1)}M_i^2\int_{B_{\sigma M_i^{\frac{p_i-1}{2}}}(0)}v_i^2d\cerchio{V}$$
 $$\int_{B_1(\overline{x})}(2R\circ\delta_{\sigma}+\Xi(R\circ\delta_{\sigma}))(u_i\circ\delta_{\sigma})^2d\cerchio{V} \lesssim$$
 $$\lesssim \sigma^{-4}M_i^{-2(p_i-1)}M_i^2M_i^{-(p_i-1)}\int_{B_{\sigma M_i^{\frac{p_i-1}{2}}}(0)}|x|^2(1+|x|)^{-4}d\cerchio{V}\lesssim \sigma^{-2}M_i^{-2},$$
% $$\int_{B_1(\overline{x})}(2R+\Xi(R))(u_i\circ\delta_{\sigma})^2d\cerchio{V} \lesssim M_i^{-2}\int_{B_1(\overline{x})}|x|^2|x|^{-4}d\cerchio{V} \lesssim M_i^{-2}$$
$$\int_{\de B_1(\overline{x})}(\phi\circ\delta_{\sigma})^{-\tau_i}(\widetilde{u}_i\circ\delta_{\sigma})^{p_i+1}\Xi\cdot\cerchio{\ni} d\cerchio{\sigma} \lesssim M_i^{-(p_i-1)}\sigma^{-2(p_i-1)},$$
and since $\frac{4}{p_i+1}-1 = \frac{\tau_i}{p_i+1}$,
$$\int_{B_1(\overline{x})}\left[\left(\frac{4}{p_i+1}-1\right)(\phi\circ\delta_{\sigma})^{-\tau_i}(\widetilde{u}_i\circ\delta_{\sigma})^{p_i+1}+ \frac{1}{p_i+1}\Xi((\phi\circ\delta_{\sigma})^{-\tau_i})(\widetilde{u}_i\circ\delta_{\sigma})^{p_i+1}\right]=$$
$$=\frac{\tau_i}{p_i+1}\int_{B_1(\overline{x})}(\phi\circ\delta_{\sigma})^{-\tau_i-1}\left[\phi\circ\delta_{\sigma}+ \Xi(\phi\circ\delta_{\sigma})\right](\widetilde{u}_i\circ\delta_{\sigma})^{p_i+1}\ge 0$$
if $\sigma$ is small enough.

Multiplying formula \eqref{PohozaevTeoremaOrdineZero} by $\sigma^3M_i^2$, taking $\lim_{\sigma\to 0}\limsup_{i\to\infty}$ and using the above estimates and the last statement in Lemma \ref{LemmaStime}, one gets that
$$\lim_{\sigma\to 0}\limsup_{i\to\infty}\sigma^3M_i^2\int_{\de B_1(\overline{x})}\ci{B}(x,\widetilde{u}_i\circ\delta_{\sigma},\cerchio{\nabla}^H(\widetilde{u}_i\circ\delta_{\sigma}))d\cerchio{\sigma} \ge 0.$$
But by Proposition \ref{LimiteB}
$$\lim_{\sigma\to 0}\limsup_{i\to\infty}\sigma^3M_i^2\int_{\de B_1(\overline{x})}\ci{B}(x,\widetilde{u}_i\circ\delta_{\sigma},\cerchio{\nabla}^H(\widetilde{u}_i\circ\delta_{\sigma}))d\cerchio{\sigma} =$$
$$=\lim_{\sigma\to 0}\sigma^3\int_{\de B_1(\overline{x})}\ci{B}(x,h\circ\delta_{\sigma},\cerchio{\nabla}^H(h\circ\delta_{\sigma}))d\cerchio{\sigma} = -\frac{c_na}{\gamma_n}A$$
and therefore $A\le 0$.
\end{proof}

\begin{proposizione}\label{BlowUpIsolatiSonoSemplici}
 Isolated blow-up points for equation \eqref{EquazioneRCostante} are isolated simple blow-up points in CR normal coordinates.
\end{proposizione}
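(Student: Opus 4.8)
The plan is to argue by contradiction, turning a hypothetical failure of simplicity into a \emph{new} isolated blow-up which \emph{is} simple, and then contradicting the sign information of Lemma \ref{SegnoTermineOrdineZero}. Throughout I work in CR normal coordinates around $\overline{x}$ (Remark \ref{OsservazioneCoordinateCR}), so that all statements are understood in those coordinates. Recall from Proposition \ref{BlowUpIsolatiBolle} that, after rescaling at the bubble scale $M_i^{-\frac{p_i-1}{2}}$, $u_i$ is close to the Jerison--Lee bubble $U$, whose spherical profile $r\mapsto r\,\overline{U}(r)$ has a single critical point, a maximum. Hence $\overline{w}_i(r)=r^{\frac{2}{p_i-1}}\overline{u}_i(r)$ has its first critical point at a scale comparable to $M_i^{-\frac{p_i-1}{2}}$ and, by $C^2$ convergence to $s\,\overline{U}(s)$, is strictly decreasing on the rest of the bubble region $(0,R_iM_i^{-\frac{p_i-1}{2}})$. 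If $\overline{x}$ is not isolated simple, then for every fixed $\rho$ the function $\overline{w}_i$ has, along a subsequence, a second critical point in $(0,\rho)$; letting $\rho\to 0$ along a diagonal subsequence produces a second critical point $r_i\to 0$, and by the previous sentence $r_i\ge R_iM_i^{-\frac{p_i-1}{2}}$, so $r_i$ sits strictly above the bubble scale.

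Next I would rescale at $r_i$. Setting $\widetilde{\xi}_i(y)=r_i^{\frac{2}{p_i-1}}u_i(\delta_{r_i}y)$ in the rescaled pseudohermitian structure, which converges to $\H^1$ by \eqref{ConvergenzaStrutture}, the function $\widetilde{\xi}_i$ solves $L_{\theta_i'}\widetilde{\xi}_i=\widetilde{\xi}_i^{p_i}$, $0$ is a local maximum, $\widetilde{M}_i:=\widetilde{\xi}_i(0)=r_i^{\frac{2}{p_i-1}}M_i\ge R_i^{\frac{2}{p_i-1}}\to\infty$, and the isolatedness bound rescales to $\widetilde{\xi}_i(y)\le C|y|^{-\frac{2}{p_i-1}}$; thus $0$ is an isolated blow-up point for $\widetilde{\xi}_i$. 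Its spherical profile satisfies $s^{\frac{2}{p_i-1}}\,\overline{\widetilde{\xi}}_i(s)=\overline{w}_i(r_is)$, so its critical points are those of $\overline{w}_i$ rescaled by $r_i^{-1}$: the bubble one is pushed to $s\to 0$ and the next is exactly $s=1$. Consequently, for any fixed $\rho'<1$ there is exactly one critical point in $(0,\rho')$, i.e. $0$ is an isolated \emph{simple} blow-up point for $\widetilde{\xi}_i$, and I may apply to $\widetilde{\xi}_i$ the whole machinery developed for isolated simple blow-ups.

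In particular, Proposition \ref{LimiteBlowUp} gives $\widetilde{M}_i\widetilde{\xi}_i\to h$ in $C^2_{\mathrm{loc}}(B_{\rho'/2}\setminus\{0\})$ with $h=aG+b$, $a>0$, where $G=\gamma_1|y|^{-2}$ is the fundamental solution on $\H^1$ and $L_{\cerchio{\theta}}b=0$. To reach the unit scale I would extend this convergence: since $\overline{x}$ is isolated, $\widetilde{\xi}_i$ has no blow-up point in the annulus $\{0<|y|\le 2\}$, so chaining the Harnack inequality of Lemma \ref{Harnack} outward from $\partial B_{\rho'/2}$ keeps $\widetilde{M}_i\widetilde{\xi}_i$ bounded there, and the subelliptic estimates of Theorem \ref{TeoremaRegolarita} upgrade this to $C^2$ convergence on the annulus. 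Hence $h$ is a positive solution of $-\cerchio{\Delta}_bh=0$ on all of $\H^1\setminus\{0\}$; by the classification of singular solutions its regular part $b$ is a bounded nonnegative $\cerchio{\Delta}_b$-harmonic function, hence a constant $b\ge 0$.

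Finally I would extract the contradiction. The identity $s^{\frac{2}{p_i-1}}\,\overline{\widetilde{\xi}}_i(s)=\overline{w}_i(r_is)$ together with $\overline{w}_i'(r_i)=0$ gives $\frac{d}{ds}\big[s^{\frac{2}{p_i-1}}\,\widetilde{M}_i\overline{\widetilde{\xi}}_i(s)\big]\big|_{s=1}=0$; by the $C^1$ convergence near $|y|=1$ established above this passes to the limit as $\frac{d}{ds}\big[s\,\overline{h}(s)\big]\big|_{s=1}=0$, where $\overline{h}(s)=C_0\big(a\gamma_1s^{-2}+b\big)$ and $C_0=\int_{\partial B_1}d\cerchio{\sigma}$, so that $b=a\gamma_1>0$. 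On the other hand, near the origin $h(y)=\frac{a\gamma_1}{|y|^2}+b+o(1)$ with $a\gamma_1>0$, so Lemma \ref{SegnoTermineOrdineZero} applied to $\widetilde{\xi}_i$ yields $b\le 0$, a contradiction; therefore $\overline{x}$ must be isolated simple. The hard part is the third step: certifying that $\widetilde{\xi}_i$ is a genuine isolated simple blow-up and, above all, propagating the estimates on $\widetilde{M}_i\widetilde{\xi}_i$ from the small ball $B_{\rho'/2}$, where Proposition \ref{LimiteBlowUp} applies directly, out to the fixed scale $|y|\approx 1$ where the critical-point condition lives. This rests entirely on the absence of intermediate concentration, i.e. on isolatedness together with the Harnack inequality, and on the fact that the equivalence of the Korányi and Carnot--Carathéodory distances lets one freely translate between the two notions of simplicity.
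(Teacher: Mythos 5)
Your proposal is correct and follows essentially the same route as the paper: rescale at the second critical point $\mu_i$ of the spherical profile, verify that $0$ becomes an isolated \emph{simple} blow-up point for the rescaled family, use Proposition \ref{LimiteBlowUp} plus Harnack on annuli to pass to a limit $h=a|x|^{-2}+b$ on $\H^1\setminus\{0\}$ with $b$ a nonnegative constant by Liouville, read off $b=a>0$ from the critical-point condition at $s=1$, and contradict Lemma \ref{SegnoTermineOrdineZero}. The only detail you elide is that in CR normal coordinates the equation carries the conformal factor $(\phi_i\circ\delta_{\mu_i})^{-\tau_i}$ and the contact forms vary with $i$, which the paper handles by noting that the proof of Lemma \ref{SegnoTermineOrdineZero} adapts; this does not affect the structure of your argument.
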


\begin{proof}
 If this is not the case, up to subsequences there exists $\overline{\mi}_i\to 0$ such that $r^{\frac{2}{p_i-1}}\overline{u}_i(r)$ has at least two critical points in $(0,\overline{\mi}_i)$.
 Up to further passage to subsequences, one can assume that the thesis of Proposition \ref{BlowUpIsolatiBolle} holds.
 Then
% $r^{\frac{2}{p_i-1}}\overline{u}_i(r)$
 $r^{\frac{2}{p_i-1}}\overline{\widetilde{u}}_i(r)$
 has exactly one critical point in $(0,r_i)$ where $r_i=R_iM_i^{-\frac{p_i-1}{2}}$.
 
 Since critical points form a closed set, let $\mi_i\in(r_i,\overline{\mi}_i)$ be the second critical point.
 
 We pass in CR normal coordinates around $x_i$ using the notation from Remark \ref{OsservazioneCoordinateCR}, and we define
 $\xi_i=\mi_i^{\frac{2}{p_i-1}}\widetilde{u}_i\circ\delta_{\mi_i}$
% $\xi_i=\mi_i^{\frac{2}{p_i-1}}\widetilde{u}_i\circ\delta_{\mi_i}\circ L_{x_i}$
 on $B_{1/\mi_i}(\overline{x})$.
 Then, following Remark \ref{OsservazioneRiscalamento}, $\xi_i$ satisfies the equation
 $$L_{\theta_i}\xi_i= (\phi_i\circ\delta_{\mi_i})^{-\tau_i}\xi_i^{p_i}$$
% $$L_{\theta_i}\xi_i= (\phi_i\circ\delta_{\mi_i}\circ L_{x_i})^{-\tau_i}\xi_i^{p_i}$$
 where
 $\theta_i=\mi_i^2\left(\delta_{\mi_i}\right)^*\widetilde{\theta}_i$.
% $\theta_i=\mi_i^2\left(\delta_{\mi_i}\circ L_{x_i}\right)^*\widetilde{\theta}_i$.
 
 Furthermore $\xi_i(x)\le Cd(x,x_i)^{-\frac{2}{p_i-1}}$ by definition of isolated blow-up point, 
 $$\xi_i(0) = \mi_i^{\frac{p_i-1}{2}}\widetilde{u}_i(x_i) \ge R_i^{\frac{p_i-1}{2}}M_i^{-1}\widetilde{u}_i(x_i)\to\infty,$$
 $r^{\frac{2}{p_i-1}}\overline{\xi}_i(r)$ has exactly one critical point on $(0,1)$,
 \begin{equation}\label{FormulaDimBlowUpIsolati}
  \left.\frac{d}{dr}\right|_{r=1}\left(r^{\frac{2}{p_i-1}}\xi_i(0)\overline{\xi}_i(r)\right)=0,
 \end{equation}
 and the domains of $\xi_i$ exaust $\H^1$.
 
 Proposition \ref{LimiteBlowUp} and an application of Lemma \ref{Harnack} to increasing annuli permit to prove that on every compact $K\subset\H^1\setminus\{0\}$ there exists $C_K$ such that
 $$C_K^{-1} \le \xi(0)\xi\le C_K$$
 on K. This and Proposition \ref{LimiteBlowUp} allow to conclude that, up to subsequences,
 $$\xi_i(0)\xi(x) \to h(x)= \frac{a}{|x|^2} +b(x)$$
 on compact subsets of $\H^1\setminus\{0\}$, with $a>0$ and $b$ satisfying $\Delta_bb=0$ on $\H^1$.
 Since $h$ is positive, then $\liminf_{x\to\infty}b(x)\ge 0$, and thus by the maximum principle $b\ge 0$, and therefore by the Liouville theorem on $\H^n$ (which, for example, can be derived by the Harnack inequality) $b(x)\equiv b$ is a constant.
 
 Furthermore,
 %multiplying equation \eqref{FormulaDimBlowUpIsolati} and passing to the limit
 passing to the limit in equation \eqref{FormulaDimBlowUpIsolati}
 allows to deduce that
 $$0=\left.\frac{d}{dr}\right|_{r=1}(r\overline{h}(r))=C\left.\frac{d}{dr}\right|_{r=1}\left(\frac{a}{r}+ br \right) = (-a+b)C$$
 (where $C=\int_{\de B_1(0)}d\cerchio{\sigma}$) therefore $b=a>0$. But this goes against Lemma \ref{SegnoTermineOrdineZero} (here we are in a situation slightly different from the hypotheses of that lemma because the contact form $\widetilde{\theta}_i$ is not fixed, but the proof can be adapted straightforwardly).
\end{proof}

\begin{proposizione}\label{ElencoPropBlowUp}
 For every $\e>0$, $R>1$, there exist $C_0$ e $C_1$ such that if
% $L_g u= \widetilde{R}u^{p}$
 $u$ solves equation \eqref{EquazioneRCostante}
 with $\max u>C_0$, then there exist $x_1,\ldots,x_k$ verifying:
 \begin{enumerate}
  \item $0\le\tau_i<\e$;
  \item the $x_i$ are local maxima of $u$, and calling $\overline{r}_j=Ru(x_j)^{-(p-1)/2}$, the balls $B_{\overline{r}_j}(x_j)$ are disjoint and in pseudohermitian coordinates around $x_i$
  $$\N{\frac{1}{u(x_j)}u\left(\delta_{M_i^{-\frac{p_i-1}{2}}}(x_i^{-1} \cdot x)\right) -U(x)}_{C^2(B_R(0))}<\e;$$
  \item $d(x,\{x_1,\ldots,x_k\})^{\frac{2}{p-1}}\le C_1$ per ogni $x\in M$.
 \end{enumerate}
\end{proposizione}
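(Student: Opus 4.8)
The plan is to prove this by Schoen's iterated bubble-extraction, feeding each extracted concentration into the blow-up analysis already developed in this section. Fix $\e>0$ and $R>1$. Since throughout this section $p_i\to 3$, we have $\tau_i=3-p_i\to 0$ with $\tau_i\ge 0$, so property (1) holds automatically for $i$ large, i.e.\ once $\max_M u_i$ exceeds a suitable $C_0$; the content is therefore the construction of the points $x_1^i,\dots,x_{k_i}^i$ realizing (2) and (3) with constants depending only on $\e$ and $R$. Because $p_i\to 3$, the convergence of any suitably normalized and rescaled concentration to the Jerison--Lee bubble $U$ is available through Proposition \ref{BlowUpIsolatiBolle}, whose proof needs only a uniform bound on the rescaled functions on compact sets (via Lemma \ref{Harnack}, the bootstrap of Theorem \ref{TeoremaRegolarita}, the convergence of structures \eqref{ConvergenzaStrutture}, and the classification Theorem \ref{Classificazione}).

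The core is the inductive selection. Having chosen local maxima $x_1^i,\dots,x_j^i$ of $u_i$, put $\Sigma_j=\{x_1^i,\dots,x_j^i\}$ and consider $\mathcal F_j(x)=d(x,\Sigma_j)^{2/(p_i-1)}u_i(x)$. If $\max_M\mathcal F_j\le C_1^\ast$, where $C_1^\ast=C_1^\ast(\e,R)$ is a large constant fixed below, the selection stops and property (3) holds with $C_1=C_1^\ast$. Otherwise I let $x_{j+1}^i$ be a maximum point of $\mathcal F_j$, which I then replace by the nearest local maximum of $u_i$. Since $\mathcal F_j(x_{j+1}^i)>C_1^\ast$ and $d(x_{j+1}^i,\Sigma_j)\le\operatorname{diam}M$ while $2/(p_i-1)$ stays bounded, one gets $u_i(x_{j+1}^i)\gtrsim C_1^\ast$ and, crucially, $d(x_{j+1}^i,\Sigma_j)\,u_i(x_{j+1}^i)^{(p_i-1)/2}=\mathcal F_j(x_{j+1}^i)^{(p_i-1)/2}\gg R$ once $C_1^\ast$ is large relative to $R^2$; hence the Korányi ball $B_{\overline r_{j+1}}(x_{j+1}^i)$ with $\overline r_{j+1}=Ru_i(x_{j+1}^i)^{-(p_i-1)/2}$ is separated from $\Sigma_j$ on the scale $\overline r_{j+1}$, which together with the analogous separation enjoyed by each earlier $x_l^i$ at its own selection step gives the mutual disjointness required in (2).

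On the region where $d(\cdot,\Sigma_j)\ge\tfrac12 d(x_{j+1}^i,\Sigma_j)$ the maximality of $\mathcal F_j$ forces $u_i\le 2^{2/(p_i-1)}u_i(x_{j+1}^i)$, so rescaling at $x_{j+1}^i$ by $u_i(x_{j+1}^i)^{-(p_i-1)/2}$ produces a function that is uniformly bounded on a ball $B_{\rho_i}$ with $\rho_i=\tfrac12 d(x_{j+1}^i,\Sigma_j)\,u_i(x_{j+1}^i)^{(p_i-1)/2}\to\infty$. This is exactly the situation resolved in Proposition \ref{BlowUpIsolatiBolle}: the rescaled function converges in $C^2(B_R)$ to $U$, so property (2) holds at $x_{j+1}^i$ as soon as $u_i(x_{j+1}^i)$ exceeds a threshold $C_0(\e,R)$ that does not depend on $j$. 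For each fixed $i$ the selection terminates, because the disjoint balls $B_{\overline r_l}(x_l^i)$ all have radius at least $R(\max_M u_i)^{-(p_i-1)/2}>0$ and $M$ is compact, so only finitely many fit. Assembling these facts gives, for every $i$ with $\max_M u_i>C_0$, a set $\{x_1^i,\dots,x_{k_i}^i\}$ satisfying (1), (2) with the prescribed $\e$, and (3) with $C_1=C_1^\ast$.

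I expect the main obstacle to be the uniformity of the bubble approximation across the selection: property (2) must hold at every extracted center with a single threshold $C_0$, and there one can only invoke the compactness of Proposition \ref{BlowUpIsolatiBolle} with the weaker a priori bound $u_i\le 2^{2/(p_i-1)}u_i(x_{j+1}^i)$ coming from maximality of $\mathcal F_j$, rather than the sharper $v_i\le 1$ valid at the global maximum. Keeping the exponent near critical throughout (so that Theorem \ref{Classificazione} applies to the limit), the re-centering of $x_{j+1}^i$ to a genuine local maximum of $u_i$, and the verification that the resulting separation yields disjointness symmetrically against all previously chosen bubbles are the remaining technical points, all of which are handled by the same estimates and the Harnack inequality of Lemma \ref{Harnack}.
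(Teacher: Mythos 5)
Your selection procedure is essentially the standard Schoen bubble-extraction that the paper itself invokes (it simply points to Proposition 5.1 in \cite{LZhu} and lists the two ingredients to be substituted), so the skeleton of your argument is the right one. However, there is a genuine gap in your treatment of item (1). You dismiss it with ``since throughout this section $p_i\to 3$, property (1) holds automatically.'' This inverts the logic of the proposition: the statement concerns a \emph{single} solution $u$ of $L_\theta u = u^p$ with $p$ a priori anywhere in $[1+\e,3]$, and conclusion (1) asserts that $\max u > C_0$ \emph{forces} $\tau = 3-p < \e$. Indeed, in the proof of Theorem \ref{IlTeorema} the paper deduces ``$p_i\to 3$'' precisely \emph{from} point 1 of this proposition, so assuming it as you do makes the overall argument circular. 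The correct proof of (1) is by contradiction: if $\max u_i\to\infty$ with $p_i\to p_\infty<3$, rescaling at the maximum points produces a bounded positive entire solution of $-4\Delta_b v = v^{p_\infty}$ on $\H^1$ with $v(0)=1$, and one needs a Liouville theorem for the \emph{subcritical} exponent to rule this out. This is exactly why the paper cites Ma--Ou (Theorem 1.1 in \cite{MO}) as the replacement for Gidas--Spruck; your proposal never uses it, and Theorem \ref{Classificazione} cannot substitute for it since it only classifies solutions of the critical equation $-\Delta_b u = u^3$.

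The same omission propagates into your item (2): at each extraction you pass to a limit of the rescaled functions and invoke Theorem \ref{Classificazione} to identify it with $U$, but that identification is only legitimate once the limiting exponent is known to be $3$, i.e.\ once (1) has been established non-circularly. With the Ma--Ou step inserted at the outset (and the proposition then proved by contradiction for fixed $\e$, $R$, taking $C_0=C_1=i\to\infty$), the rest of your sketch --- the maximization of $d(\cdot,\Sigma_j)^{2/(p-1)}u$, the separation and disjointness of the balls $B_{\overline r_j}(x_j)$, the uniform bound $u\le 2^{2/(p-1)}u(x_{j+1})$ away from $\Sigma_j$ feeding into the regularity bootstrap, and the finiteness of the selection --- is the standard argument and is sound.
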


\begin{proof}
 The proof is equal to the analogous statement for Riemannian manifolds, but using Theorem \ref{Classificazione} instead of the corresponding result in $\R^n$ by Caffarelli, Gidas and Spruck (Corollary 8.2 in \cite{CGS}) and Ma-Ou's theorem (Theorem 1.1 in \cite{MO}) instead of the corresponding result in $\R^n$ by Gidas and Spruck (Theorem 1.1 in \cite{GS}). See for example Proposition 5.1 in \cite{LZhu}.
\end{proof}

\begin{proposizione}\label{BlowUpSonoIsolati}
 In the same hypotheses of the Proposition \ref{ElencoPropBlowUp}, there exists $\delta^*$
 depending only by
 %the structural data \rosso{(SCRIVERE QUALI)}
 $(M,\ci{H},\theta)$, $R$ and $\e$
 such that, in the notation of the same proposition, if $\max u>C_0$ then $d(x_j,x_{\ell})\ge\delta^*$ for every $j\ne\ell$.
\end{proposizione}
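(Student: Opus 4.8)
The plan is to argue by contradiction. If no such $\delta^*$ existed, there would be a sequence $u_i$ of solutions of \eqref{EquazioneRCostante} with $\max u_i\to\infty$ for which, writing $x_1^i,\ldots,x_{k_i}^i$ for the points produced by Proposition \ref{ElencoPropBlowUp}, the minimal mutual distance $\sigma_i=\min_{j\neq\ell}d(x_j^i,x_\ell^i)$ tends to $0$. After relabeling we may assume the minimum is attained by $x_1^i$ and $x_2^i$; set $M_i=u_i(x_1^i)$ and rescale at scale $\sigma_i$ centered at $x_1^i$, as in Remark \ref{OsservazioneRiscalamento}, producing $\hat u_i$ with $\hat u_i(0)=\sigma_i^{\frac{2}{p_i-1}}M_i$. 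By \eqref{ConvergenzaStrutture} the rescaled pseudohermitian structures converge smoothly on compact sets to the standard structure of $\H^1$, and the domains exhaust $\H^1$; the image $\hat x_2^i$ of $x_2^i$ satisfies $d(0,\hat x_2^i)=1$, so up to a subsequence $\hat x_2^i\to\hat x_\infty$ with $|\hat x_\infty|=1$. The weak estimate of Proposition \ref{ElencoPropBlowUp} rescales to a bound of the form $\hat u_i(y)\le C_1\, d(y,\hat S_i)^{-\frac{2}{p_i-1}}$, where $\hat S_i$ is the image of $\{x_1^i,\ldots,x_{k_i}^i\}$; since $\sigma_i$ is the minimal distance, every point of $\hat S_i$ other than $0$ lies at distance $\ge 1$ from $0$. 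Thus $0$ and $\hat x_\infty$ are mutually isolated from the rest of $\hat S_i$, and both are limits of local maxima of $\hat u_i$. Note also that disjointness of the bubble balls of Proposition \ref{ElencoPropBlowUp} forces $\sigma_i\ge R\,M_i^{-\frac{p_i-1}{2}}$, so $\hat u_i(0)\ge R^{\frac{2}{p_i-1}}>1$.

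The argument then splits according to the size of $\hat u_i(0)$. If $\hat u_i(0)$ stays bounded along a subsequence, the rescaled weak estimate bounds $\hat u_i$ uniformly on compact subsets of $\H^1$ (the value at any rescaled blow-up point being controlled by the distance $\ge 1$ to the nearest other one), so $\hat u_i\to\hat u$ in $C^2_{\mathrm{loc}}$, with $\hat u$ a positive solution of $L_{\cerchio{\theta}}\hat u=\hat u^3$ on $\H^1$ (nontrivial since $\hat u(0)\ge 1$). By Theorem \ref{Classificazione}, $\hat u$ is a single Jerison--Lee bubble and hence has a unique critical point; but $0$ and $\hat x_\infty$ are distinct limits of local maxima of $\hat u_i$, hence distinct critical points of $\hat u$, a contradiction. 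If instead $\hat u_i(0)\to\infty$, then $0$ is an isolated blow-up point for $\hat u_i$; by Proposition \ref{BlowUpIsolatiSonoSemplici} it is isolated simple (after passing to CR normal coordinates as in Remark \ref{OsservazioneCoordinateCR}), so Proposition \ref{LimiteBlowUp} gives $\hat u_i(0)\,\hat u_i\to h=aG+b$ near $0$ with $a>0$. Because $\hat u_i(0)\,\hat u_i(\hat x_2^i)=\sigma_i^{\frac{4}{p_i-1}}u_i(x_1^i)u_i(x_2^i)\to\infty$, the limit $h$ carries a second, strictly positive point singularity at $\hat x_\infty$; writing $h=a\,|x|^{-2}+A+o(1)$ near $0$, nonnegativity of $h$ together with the Liouville theorem on $\H^1$ forces the regular part to be a positive multiple of the fundamental solution centered at $\hat x_\infty$ plus a nonnegative constant, so that $A\ge(\text{positive})\,|\hat x_\infty|^{-2}>0$, contradicting Lemma \ref{SegnoTermineOrdineZero}.

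The heart of the matter is the second case: one must secure that the rescaled origin is not merely isolated but isolated simple, so that the sharp description of Proposition \ref{LimiteBlowUp} and the sign information of Lemma \ref{SegnoTermineOrdineZero} are available, and one must verify that the distant blow-up point at $\hat x_\infty$ indeed feeds a strictly positive contribution into the constant term $A$ of the expansion at the origin. A recurring technical point, already noted in the proof of Proposition \ref{BlowUpIsolatiSonoSemplici} and in Remark \ref{OsservazioneCoordinateCR}, is that the whole analysis is carried out for a sequence of \emph{varying} pseudohermitian structures, since the center $x_1^i$ moves and the CR normal coordinate forms change with $i$; because these structures converge smoothly on compact sets, the embeddings and regularity estimates of Subsection \ref{CapitoloSpazi}, the Harnack inequality of Lemma \ref{Harnack} and the limiting arguments all apply with uniform constants. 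The contradiction obtained in either case then yields the asserted uniform lower bound $\delta^*=\delta^*(M,\ci{H},\theta,R,\e)$.
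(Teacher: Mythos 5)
Your overall strategy is the paper's: argue by contradiction, rescale at the minimal mutual distance $\sigma_i$ around $x_1^i$, and conclude either via the classification theorem (Theorem \ref{Classificazione}) when nothing blows up after rescaling, or via the expansion of the limit of $\hat u_i(0)\hat u_i$ and Lemma \ref{SegnoTermineOrdineZero} when it does. Your second branch is essentially the paper's argument, and the observation that $\hat u_i(\hat x_2^i)\ge R^{2/(p_i-1)}$ (from the disjointness of the bubble balls) together with $\hat u_i(0)\to\infty$ forces the product $\hat u_i(0)\hat u_i(\hat x_2^i)$ to diverge, hence forces a second singularity of $h$ at $\hat x_\infty$, is correct.

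The gap is in the dichotomy itself and in your first branch. You split only on whether $\hat u_i(0)$ stays bounded, and in the bounded case you assert that the rescaled weak estimate of Proposition \ref{ElencoPropBlowUp} bounds $\hat u_i$ uniformly on compact sets, ``the value at any rescaled blow-up point being controlled by the distance $\ge 1$ to the nearest other one.'' This misreads the estimate: point 3 of Proposition \ref{ElencoPropBlowUp} controls $u(x)$ by $d(x,\{x_1,\dots,x_k\})^{-2/(p-1)}$, and that distance vanishes at each $x_j$ itself, so the estimate gives no control whatsoever on the heights $u(x_j)$, which are a priori independent of one another. Consequently the case ``$\hat u_i(0)$ bounded but $\hat u_i(\hat x_2^i)\to\infty$'' (or a third rescaled concentration point at distance $\ge 1$ blowing up) falls through both of your branches: it is excluded from the second branch by hypothesis, and in the first branch you do not have $C^2_{\mathrm{loc}}$ convergence on compacta containing the diverging point. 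The paper closes exactly this hole: if some point of the rescaled concentration set blows up while $\hat u_i(0)$ stays bounded, that point is an isolated blow-up point (the mutual distances are $\ge 1$), hence isolated simple by Proposition \ref{BlowUpIsolatiSonoSemplici}, and Proposition \ref{LimiteBlowUp} combined with the maximum principle and the Harnack inequality propagated along annuli forces $\hat u_i\to 0$ locally uniformly away from the blow-up set, in particular $\hat u_i(0)\to 0$, contradicting the lower bound $\hat u_i(0)\ge R^{2/(p_i-1)}>1$ that you yourself derived. This is what legitimizes the clean two-case structure (either both $\hat u_i(0)$ and $\hat u_i(\hat x_2^i)$ diverge, or no blow-up occurs at all); with this sub-case supplied, your proof coincides with the paper's.
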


\begin{proof} 
 Were the thesis false, there would exist $\e$, $R$ and sequences $p_i$ and $u_i$ of solutions of
 %$L_{\theta}u_i = \widetilde{R}u_i^{p_i}$ \rosso{($\widetilde{R}=1$)}
 equation \eqref{EquazioneRCostante} with local maxima $x_1^i,\ldots,x_{N_i}^i$ such that (up to relabeling)
 $$\sigma_i = d(x_1^i,x_2^i)=\min_{j,\ell}d(x_j^i,x_{\ell}^i)\to 0.$$
 By point 1 of Proposition \ref{ElencoPropBlowUp}, $p_i\to 3$.
 Since by point 2 in the same proposition $B_{Ru_i(x_1^i)}\cap B_{Ru_i(x_2^i)}=\emptyset$, then $u_i(x_1^i),u_i(x_2^i)\to\infty$.
 
 Using CR normal coordinates around $x_1^i$ with the notation from Remark \ref{OsservazioneCoordinateCR}, we have
 $$L_{\widetilde{\theta}_i}\widetilde{u}_i = \phi_i^{-\tau_i}\widetilde{R}\widetilde{u}_i^{p_i}.$$
 Let $w_i=\sigma_i^{\frac{2}{p_i-1}}\widetilde{u}_i\circ\delta_{\sigma_i}$ defined on $B_{1/\sigma}\subset\H^1$.
 Then $w_i$ satisfies
 $$L_{\theta'_i}w_i = (\phi_i\circ\delta_{\sigma_i})^{-\tau_i}\widetilde{R}w_i^{p_i}$$
 (where $\widetilde{\theta}_i = \sigma_i^2\left(\delta_{\sigma_i}\right)^*\widetilde{\theta}$).
 %and we follow Remark \ref{OsservazioneRiscalamento})
 
 Calling $\widetilde{x}_i\in\H^1$ the point corresponding to $x_2^i$ in CR coordinates, since the Carnot-Carathéodory distance of $M$ converges to the one of $\H^1$, $d(\widetilde{x}_i,0)\to 1$, and therefore up to subsequences $\widetilde{x}_i\to \widetilde{x}$ with $d(\widetilde{x},0)=1$.
 
 Let $S\subset\H^1$ be the set of blow-up points for $w_i$. By construction
 $$\min_{x,y\in S,x\ne y}d(x,y)\ge 1,$$
 and thus, using point 3 in Proposition \ref{ElencoPropBlowUp}, they are isolated blow up points.
 It is easy to see that if $w_i(0)\to\infty$ then $0\in S$, and similarly for $\widetilde{x}$.
 
 Since, by point 2 in Proposition \ref{ElencoPropBlowUp},
 $$\sigma_i\ge\max\left\{Ru_i(x_i^1)^{-\frac{p_i-1}{2}}, Ru_i(x_i^2)^{-\frac{p_i-1}{2}}\right\},$$
 then
 \begin{equation}\label{DisuguaglianzaDimPuntiIsolati}
  w_i(0),w_i(\widetilde{x}_i)\ge R^{\frac{2}{p_i-1}}.
 \end{equation}
 
 We want to prove that up to subsequences $w_i(0),w_i(\widetilde{x}_i)\to\infty$.
 
 Suppose that $w_i(0)\to\infty$ but $w_i(\widetilde{x}_i)$ stays bounded.
 Then, by Proposition \ref{BlowUpIsolatiSonoSemplici}, $0$ would be an isolated simple blow-up point, and using Proposition \ref{LimiteBlowUp}, and the maximum principle and Harnack inequality in a way similar to the proof of Proposition \ref{BlowUpIsolatiBolle}, it could be proved that $w_i(\widetilde{x}_i)\to 0$, against formula \eqref{DisuguaglianzaDimPuntiIsolati}.
 If $w_i(\widetilde{x}_i)\to\infty$ but $w_i(0)$ stays bounded, the reasoning is analogous.
 
 Suppose that $w_i(0)$ and $w_i(\widetilde{x}_i)$ both stay bounded.
 Reasoning as in the other cases, if $S\ne\emptyset$ then it can be proved that $w_i(0),w_i(\widetilde{x}_i)\to 0$, against formula \eqref{DisuguaglianzaDimPuntiIsolati}.
 Hence $S=\emptyset$, and therefore $w_i$ is locally bounded.
 Therefore, using regularity theory as in the proof of Proposition \ref{BlowUpIsolatiBolle}, it can be proved that up to subsequences $w_i\to v$ in $C^2_{\mathrm{loc}}$, with $v$ satisfying $-4\Delta_bv = v^3$, $v\ge 0$ and $\nabla^Hv(0)=\nabla^Hv(\widetilde{x})=0$. By Theorem \ref{Classificazione}
% and the maximum principle
 $v\equiv 0$. This goes yet again against formula \eqref{DisuguaglianzaDimPuntiIsolati}.
 
 So $0,\widetilde{x}\in S$. Now, arguing as before with the maximum principle and Harnack inequality, it can be proved that $w_i(0)w_i$ is locally bounded in $\H^1\setminus S$, and thus, up to subsequences, it converges in $C^2_{\mathrm{loc}}(\H^1\setminus S)$ to a function $h$, and because of Proposition \ref{LimiteBlowUp}
 $$h(x)= \frac{a}{|x|^2} + \frac{b}{|\widetilde{x}^{-1}x|^2} + c(x)$$
 where $a,b>0$, $c$ is smooth in $\H^1\setminus(S\setminus\{0,\widetilde{x}\})$ and verifies $\Delta_bc=0$.
 Since $h>0$, by the maximum principle $c\ge 0$.
 Therefore
 $$h(x)= \frac{a}{|x|^2} + A + o(1)$$
 with $A>0$, but, arguing as in the proof of Proposition \ref{BlowUpIsolatiSonoSemplici}, this is in contradiction with Proposition \ref{SegnoTermineOrdineZero}.
\end{proof}

%So we proved the following result.
%\begin{proposizione}\label{TeoremaBlowUp}
% Let $u_i$ be a sequence of solutions of equation \eqref{EquazioneCapitolo4} with $\widetilde{R}>0$ constant and $p_i\to 3$. Then, up to subsequences, there exist a finite number of blow-up points, and they are are isolated simple. \rosso{FALSO!}
%\end{proposizione}
%\begin{proof}
% \rosso{(SCRIVERLA)}
%\end{proof}

Now we can finally prove the main result of this work.

\begin{proof}[Proof of Theorem \ref{IlTeorema}]
 Using subelliptic estimates and the Harnack inequality as in the proof of Proposition \ref{BlowUpIsolatiBolle}, it is sufficient to prove that there exists no blow-up point.
 
 If this is not the case, by point 1 of Proposition \ref{ElencoPropBlowUp} we know that $p_i\to 3$, and by Proposition \ref{BlowUpSonoIsolati} and point 3 of Proposition \ref{ElencoPropBlowUp} there exist a finite number of isolated blow-up points $x^1_i\to\overline{x}^1,\ldots,x_i^N\to\overline{x}^N$, which, by Proposition \ref{BlowUpIsolatiSonoSemplici}, are isolated simple in CR normal coordinates.
 
 Up to subsequences and renaming of the blow-up points, we can suppose that $u_i(x_i^1)\le u_i(x_i^k)$ for every $i$ and $k$.
 Define $w_i=u_i(x_i^1)u_i$.
 By Proposition \ref{LimiteBlowUp},
 \begin{equation}\label{StimaDimostrazioneFinale}
  w_i(x)\le Cd(x,x_i^k) \;\;\text{in}\;\; B_{\frac{\rho}{2}}(x_i^k)
 \end{equation}
 By point 3 in Proposition \ref{ElencoPropBlowUp}, $u_i$ is uniformly bounded in $M\setminus\cup_kB_{\frac{\rho}{2}}(x^k)$. Since the CR manifold has positive CR Yamabe class, there exists a metric of positive Webster curvature (this follows either by the solution of the CR Yamabe problem, or more elementarily, thanks to formula \eqref{LeggeTrasformazioneCurvatura}, by taking $\phi_1^2\theta$ where $\phi_1$ is the first eigenfunction of $L_{\theta}$).
 Then, arguing as in the proof of Proposition \ref{BlowUpIsolatiBolle}, it can be proved that
 $$\max_{M\setminus\cup_kB_{\frac{\rho}{2}}(x^k)}u_i \le C \min_{\de\left(M\setminus\cup_kB_{\frac{\rho}{2}}(x^k)\right)}u_i $$
 which along with \eqref{StimaDimostrazioneFinale} shows that $w_i$ is uniformly bounded in $M\setminus\cup_kB_{\frac{\rho}{2}}(x^k)$.
 This, inequality \eqref{StimaDimostrazioneFinale}, Theorem \ref{TeoremaRegolarita} and Proposition \ref{LimiteBlowUp} imply that, up to subsequences,
 $$w_i(x) = u_i(x_i^1)u_i(x)\to h(x)=\sum_{k=1}^N a_kG_{x^k}(x) +b(x)$$
 where $a_k\ge 0$ are constants, $a_1>0$ and $b$ is a function satisfying $L_{\theta}b=0$.
 Since $M$ has positive CR Yamabe class, $L_{\theta}$ has trivial kernel, therefore $b=0$.
 
 By Proposition 5.3 in \cite{CMY1}, thanks to hypothesis that $m_{x^1}>0$, in CR normal coordinates around $x^1$ we have
 $$ G_{x^1}(x)= \frac{1}{4\pi|x|^2} + A + w(x)$$
 with $A>0$, $w(0)=0$, $w(x)=o(1)$.
 Thus
 $$h(x)= \frac{a_1}{4\pi|x|^2} + A' + o(1)$$
 for $x\to 0$, with $A'>0$. But this goes against Lemma \ref{SegnoTermineOrdineZero}.
\end{proof}

\appendix
\section{Some computations in pseudohermitian normal coordinates}
Let $M$ be a three-dimensional pseudoconvex pseudohermitian manifold and $x\in M$.

\begin{proposizione}
 In pseudohermitian normal coordinates
 $$\theta^1 = (1+O(|x|^2))\cerchio{\theta}^1 + O(|x|^2)\cerchio{\theta}^{\con{1}} + O(|x|)\cerchio{\theta}$$
 $$\theta = (1+O(|x|^2))\cerchio{\theta} + O(|x|^3)\theta^1 + O(|x|^3)\theta^{\con{1}}.$$
 \begin{equation}\label{omega}
  {\omega_1}^1= O(|x|)\cerchio{\theta}^1 + O(|x|)\cerchio{\theta}^{\con{1}} + O(|x|)\cerchio{\theta}
 \end{equation}
\end{proposizione}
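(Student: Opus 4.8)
The plan is to deduce all three expansions from the Jerison--Lee construction of pseudohermitian normal coordinates in \cite{JL3}, whose defining property is that the pulled-back pseudohermitian structure osculates the flat Heisenberg structure to the highest order permitted by the first nonvanishing geometric invariants. The primary input I would take from that construction is the normal-coordinate expansion of the Levi form, $h_{1\con1}=\cerchio h_{1\con1}+O(|x|^2)$, with the first horizontal derivatives of $h_{1\con1}$ killed at $0$ by the normalization, exactly as $g_{ij}=\delta_{ij}+O(|x|^2)$ in Riemannian normal coordinates; the Reeb field and the horizontal distribution likewise agree with their Heisenberg counterparts at $0$ together with their first derivatives. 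Throughout I would keep in mind the anisotropic grading in which horizontal forms $\cerchio\theta^1,\cerchio\theta^{\con1}$ have weight $1$ and $\cerchio\theta$ has weight $2$, so that a Korányi order $O(|x|^k)$ corresponds to a term of parabolic weight $k$.

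First I would write the coframe of $M$ as a combination of the flat coframe,
$$\theta^1 = a\,\cerchio\theta^1 + b\,\cerchio\theta^{\con1} + c\,\cerchio\theta,\qquad \theta = e\,\cerchio\theta^1 + \con e\,\cerchio\theta^{\con1} + f\,\cerchio\theta,$$
where reality of $\theta$ forces the two mixed coefficients to be conjugate and $f$ real. The zeroth-order normalization of the coordinates gives $a(0)=f(0)=1$ and $b(0)=c(0)=e(0)=0$. Matching weights then dictates the leading orders: $a$ and $f$ have weight $0$ with leading value $1$, so the corrections start at the curvature order $O(|x|^2)$; the conjugate-linear coefficient $b$ has weight $0$, and the vanishing of the linear part of the Levi form pushes its first surviving term to weight $2$; the coefficient $c$ of the weight-$2$ form $\cerchio\theta$ must be a genuine correction of weight $1$, giving $c=O(|x|)$; and $e$, the leakage of $\theta$ into the horizontal direction, loses both its weight-$1$ and weight-$2$ parts to the normalization, leaving $e=O(|x|^3)$. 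This is precisely the first two displayed estimates, and the form stated in the proposition (with $\theta^1$ rather than $\cerchio\theta^1$ on the right) follows since the two differ only by higher-order terms.

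I would confirm these orders, rather than rely on weight counting alone, by substituting the ansatz into the structure equations of $M$ in the paper's normalization, namely $d\theta = i\,h_{1\con1}\,\theta^1\wedge\theta^{\con1}$ and $d\theta^1 = \theta^1\wedge{\omega_1}^1 + \theta\wedge\tau^1$, and comparing with the flat equations $d\cerchio\theta = i\,\cerchio h_{1\con1}\,\cerchio\theta^1\wedge\cerchio\theta^{\con1}$ and $d\cerchio\theta^1=0$. Matching the first equation order by order pins down the coefficients of $\theta$, in particular the $O(|x|^3)$ for $e$, which is sharper than naive counting and reflects that the leading horizontal--Reeb mixing is obstructed only by the torsion. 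The connection estimate \eqref{omega} then follows from the standard determination of the Tanaka--Webster connection: ${\omega_1}^1$ is fixed by $d\theta^1 = \theta^1\wedge{\omega_1}^1 + \theta\wedge\tau^1$ together with the compatibility relation ${\omega_1}^1 + \con{{\omega_1}^1} = h^{1\con1}dh_{1\con1}$, and every datum entering these — the exterior derivative of the already-expanded $\theta^1$, the torsion $\tau^1$ (vanishing at $0$), and $dh_{1\con1}$ (whose horizontal first derivatives vanish at $0$) — is $O(|x|)$ in normal coordinates, so each coefficient of ${\omega_1}^1$ against the flat coframe is $O(|x|)$.

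The hard part will be the anisotropic bookkeeping: because the Reeb direction carries weight $2$ while horizontal directions carry weight $1$, a single Korányi order $O(|x|^k)$ mixes parabolic scales, and one must verify that each Taylor coefficient vanishes to the sharp order and not merely to the order suggested by homogeneity. This is most delicate for $e$, where the claimed $O(|x|^3)$ genuinely uses that the first two weighted contributions are annihilated by the choice of normal coordinates and only the torsion-induced term survives. The other place requiring care is consistency of conventions throughout the structure-equation matching — the paper's use of $G_\theta$ rather than the more common $\frac14 G_\theta$, and the normalization $h_{\alpha\con\beta}=2\delta_{\alpha\con\beta}$ on $\H^1$ — since a misplaced constant there would not affect the orders but would obscure the comparison.
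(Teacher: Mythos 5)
The paper's entire proof of this proposition is the single sentence ``It follows from Proposition 2.5 in \cite{JL3}'': the weighted-order expansions of $\theta$, $\theta^1$ and ${\omega_1}^1$ in pseudohermitian normal coordinates are precisely the content of that proposition, and the paper simply reads them off. Your proposal is therefore a genuinely different route — an attempted rederivation from parabolic weight counting plus the structure equations — and as a plan it is reasonable; it is essentially a sketch of how one would prove the relevant part of Jerison--Lee's result. What the paper's approach buys is that the sharp orders come for free; what yours would buy, if completed, is a self-contained justification. But as written it is not complete, for the following reason.

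The inputs you declare as primary ($h_{1\con{1}}=\cerchio{h}_{1\con{1}}+O(|x|^2)$ together with agreement of the frame and its first derivatives at $0$) are strictly weaker than what the sharp orders require, and your weight counting does not close the gap by itself. Agreement to first order at the origin only forces the \emph{total weight} of $\theta^1-\cerchio{\theta}^1$ to be $\ge 2$ and of $\theta-\cerchio{\theta}$ to be $\ge 3$; that would permit $c=O(1)$ in front of $\cerchio{\theta}$ in the first display, $e=O(|x|)$ in the second, and an $O(1)$ coefficient of $\cerchio{\theta}$ in ${\omega_1}^1$. The stated estimates correspond to the stronger facts that the Jerison--Lee normalization makes $\theta^1-\cerchio{\theta}^1$ of weight $\ge 3$, $\theta-\cerchio{\theta}$ of weight $\ge 4$, and ${\omega_1}^1$ of weight $\ge 2$ with vanishing Reeb component at the origin — and these are exactly the assertions of Proposition 2.5 in \cite{JL3}, not consequences of your declared inputs. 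You acknowledge this yourself (``one must verify that each Taylor coefficient vanishes to the sharp order'') and defer the verification to a structure-equation computation you do not carry out; the danger you flag — a weight-$2$ coefficient such as $\lambda t$, whose Reeb derivative is $O(1)$ — is precisely what would defeat the naive matching. So either carry out that computation in full, or (as the paper does) cite Proposition 2.5 of \cite{JL3} directly, in which case the weight-counting discussion is unnecessary.
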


\begin{proof}
 It follows from Proposition 2.5 in \cite{JL3}.
\end{proof}

\begin{lemma}\label{LemmaRelazioniPH}
 In pseudohermitian normal coordinates
 \begin{equation}\label{RelazioniCoordinatePH}
  \begin{cases}
   Z_1= (1+O(|x|^2))\cerchio{Z}_1 +O(|x|^2)\cerchio{Z}_{\con{1}}+ O(|x|^3)\cerchio{T}\\
   Z_{\con{1}}= O(|x|^2)\cerchio{Z}_1 +(1+O(|x|^2))\cerchio{Z}_{\con{1}}+ O(|x|^3)\cerchio{T}\\
   T = O(|x|)\cerchio{Z}_1 + O(|x|)\cerchio{Z}_{\con{1}} +(1+ O(|x|^2))\cerchio{T}
  \end{cases},
 \end{equation}
 and
 \begin{equation}\label{RelazioniInverseCoordinatePH}
  \begin{cases}
   \cerchio{Z}_1= (1+O(|x|^2))Z_1 +O(|x|^2)Z_{\con{1}}+ O(|x|^3)T\\
   \cerchio{Z}_{\con{1}} = O(|x|^2)Z_1 + (1+ O(|x|^2))Z_{\con{1}} + O(|x|^3)T\\
   \cerchio{T} = O(|x|)Z_1 + O(|x|)Z_{\con{1}} + (1+O(|x|^2))T
  \end{cases}
 \end{equation}
\end{lemma}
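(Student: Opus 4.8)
The plan is to read the two displayed systems as the frame (dual) counterparts of the coframe expansions established in the previous proposition. Collecting those expansions, together with the one for $\theta^{\con 1}=\con{\theta^1}$ obtained by conjugation, into a single change of coframe
\[
\begin{pmatrix}\theta^1\\ \theta^{\con 1}\\ \theta\end{pmatrix}=M\begin{pmatrix}\cerchio{\theta}^1\\ \cerchio{\theta}^{\con 1}\\ \cerchio{\theta}\end{pmatrix},\qquad M=I+E,
\]
where, after substituting the first two rows into the third (the proposition writes the expansion of $\theta$ in terms of $\theta^1,\theta^{\con 1}$ rather than the circled coframe), one has
\[
E=\begin{pmatrix}O(|x|^2)&O(|x|^2)&O(|x|)\\ O(|x|^2)&O(|x|^2)&O(|x|)\\ O(|x|^3)&O(|x|^3)&O(|x|^2)\end{pmatrix}.
\]

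The duality between a frame and its coframe then yields the frame identities at no extra cost: since $(\theta^1,\theta^{\con 1},\theta)$ is dual to $(Z_1,Z_{\con 1},T)$, and likewise for the circled objects, a routine index computation gives $\cerchio{Z}_A=\sum_B M^B{}_A Z_B$ and $Z_A=\sum_B (M^{-1})^B{}_A \cerchio{Z}_B$. The first formula is exactly \eqref{RelazioniInverseCoordinatePH}, read off directly from the columns of $M$; the second is \eqref{RelazioniCoordinatePH}. Thus the only genuine content of the lemma is that passing from $M$ to $M^{-1}$ preserves the orders recorded in $E$.

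I would settle this last point with the Neumann series $M^{-1}=I-E+E^2-\cdots$, which converges for $|x|$ small since $E\to 0$ as $x\to 0$. It suffices to check that every power $E^k$ with $k\ge 2$ has entries of strictly higher order in $|x|$ than the corresponding entry of $E$, so that $M^{-1}=I-E+O(\text{higher order})$ has exactly the stated orders. A direct inspection of the matrix products confirms this; for instance $(E^2)_{13}=O(|x|^2)O(|x|)+O(|x|^2)O(|x|)+O(|x|)O(|x|^2)=O(|x|^3)$ against $E_{13}=O(|x|)$, and $(E^2)_{31}=O(|x|^5)$ against $E_{31}=O(|x|^3)$, the diagonal and remaining off-diagonal entries being handled identically. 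Writing $M^{-1}=I+\widetilde E$, the same inspection shows that $\widetilde E$ has the very same block structure as $E$, which simultaneously re-derives \eqref{RelazioniCoordinatePH} and \eqref{RelazioniInverseCoordinatePH} and exhibits them as mutual inverses with matching orders.

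The main (and essentially only) obstacle is bookkeeping: tracking the distinct orders $O(|x|)$, $O(|x|^2)$, $O(|x|^3)$ through the matrix products, and verifying that the substitution used to rewrite the proposition's expansion of $\theta$ in terms of the circled coframe does not lower any order. Since $O(|x|^3)\theta^1=O(|x|^3)\cerchio{\theta}^1+O(|x|^5)\cerchio{\theta}^{\con 1}+O(|x|^4)\cerchio{\theta}$ by the first row, that substitution only contributes terms of order at least as high as those already present, so it is harmless; everything else is routine linear algebra.
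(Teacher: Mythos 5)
Your proposal is correct and follows essentially the same route as the paper: both arguments rest on the coframe expansions of the preceding proposition and on inverting a matrix of the form $I+(\text{small})$ via the Neumann/Taylor series while tracking the order of each entry. The only (immaterial) difference is the order of operations — the paper first obtains \eqref{RelazioniCoordinatePH} by solving the $3\times 3$ linear systems coming from applying $\theta^1,\theta^{\con{1}},\theta$ to an ansatz and then inverts to get \eqref{RelazioniInverseCoordinatePH}, whereas you read \eqref{RelazioniInverseCoordinatePH} off directly by frame--coframe duality and perform a single inversion to recover \eqref{RelazioniCoordinatePH}.
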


\begin{proof}
 Letting $Z_1= a\cerchio{Z}_1+ b\cerchio{Z}_{\con{1}} + c\cerchio{T}$ and applying $\theta^1$, $\theta^{\con{1}}$ and $\theta$, we get
 $$\begin{cases}
    1 = (1+O(|x|^2))a + O(|x|^2)b +O(|x|)c,\\
    0 = (1+O(|x|^2))b + O(|x|^2)a + O(|x|)c,\\
    0= (1+O(|x|^2))c + O(|x|^3)a + O(|x|^3)b
   \end{cases}$$
 respectively.
 The third one implies that $c=O(|x|^3)$, and using this in the other two allows to deduce that $a=1+O(|x|^2)$ and $b=O(|x|^2)$.
 Therefore
 $$Z_1= (1+O(|x|^2))\cerchio{Z}_1 +O(|x|^2)\cerchio{Z}_{\con{1}}+ O(|x|^3)\cerchio{T}.$$
 Letting $T= d\cerchio{Z}_1+ e\cerchio{Z}_{\con{1}} + f\cerchio{T}$ and applying $\theta^1$, $\theta^{\con{1}}$ and $\theta$, we get
 $$\begin{cases}
    0 = (1+O(|x|^2))d + O(|x|^2)e + O(|x|)f,\\
    0 = O(|x|^2)d + (1+O(|x|^2))e + O(|x|)f\\
    1 = (1+O(|x|^2))f + O(|x|^3)d + O(|x|^3)e.
   \end{cases}$$
 Arguing as before,
 %the third equation implies that $f= 1+ O(|x|^2)$, the first that $d=O(|x|)$, and the second that $e=O(|x|^4)$, which used in the first equation implies that actually $d=O(|x|^4)$, and using these in the third equation allows us to deduce $f=O(|x|^5)$.
 these imply that $d=O(|x|)$, $e=O(|x|)$ and $f= 1+ O(|x|^2)$.
 Therefore
 $$T = O(|x|)\cerchio{Z}_1 + O(|x|)\cerchio{Z}_{\con{1}} +(1+ O(|x|^2))\cerchio{T}.$$
 So we proved the first part of the Lemma.
 
 We can write the formulas we proved as
 \begin{equation*}
  \left(
   \begin{array}{c}
    Z_1\\
    Z_{\con{1}}\\
    T
   \end{array}
  \right)
  =
  \left(
   I+
   \left(
    \begin{array}{ccc}
     O(|x|^2) & O(|x|^2) & O(|x|^3) \\
     O(|x|^2) & O(|x|^2) & O(|x|^3) \\
     O(|x|) & O(|x|) & O(|x|^2)
    \end{array}
   \right)
  \right)
  \left(
   \begin{array}{c}
    \cerchio{Z}_1\\
    \cerchio{Z}_{\con{1}}\\
    \cerchio{T}
   \end{array}
  \right).
 \end{equation*}
 Applying the Taylor expansion for the matrix inverse $(I+A)^{-1}=I-A+A^2 +O(\N{A}^3)$, we get
 \begin{equation*}
  \left(
   \begin{array}{c}
    \cerchio{Z}_1\\
    \cerchio{Z}_{\con{1}}\\
    \cerchio{T}
   \end{array}
  \right)
  =
  \left(
   I+
   \left(
    \begin{array}{ccc}
     O(|x|^2) & O(|x|^2) & O(|x|^3) \\
     O(|x|^2) & O(|x|^2) & O(|x|^3) \\
     O(|x|) & O(|x|) & O(|x|^2)
    \end{array}
   \right)
   +\right.
 \end{equation*}
 \begin{equation*}
  +\left.
   \left(
    \begin{array}{ccc}
     O(|x|^4) & O(|x|^4) & O(|x|^5) \\
     O(|x|^4) & O(|x|^4) & O(|x|^5) \\
     O(|x|^3) & O(|x|^3) & O(|x|^4)
    \end{array}
   \right)
   +O(|x|^3)
  \right)
  \left(
   \begin{array}{c}
    Z_1\\
    Z_{\con{1}}\\
    T
   \end{array}
  \right)=
 \end{equation*} 
 \begin{equation*}
  =
  \left(
   I+
   \left(
    \begin{array}{ccc}
     O(|x|^2) & O(|x|^2) & O(|x|^3) \\
     O(|x|^2) & O(|x|^2) & O(|x|^3) \\
     O(|x|) & O(|x|) & O(|x|^2)
    \end{array}
   \right)
  \right)
  \left(
   \begin{array}{c}
    Z_1\\
    Z_{\con{1}}\\
    T
   \end{array}
  \right).
 \end{equation*}
 This implies the second part of the thesis.
\end{proof}

\begin{lemma}
 In pseudohermitian normal coordinates
 \begin{equation}\label{RelazioniInverseCoordinatePH2}
  \begin{cases}
   \cerchio{Z}_1^2 = (1+ O(|x|^2))Z_1^2 + O(|x|)Z_1 + O(|x|)Z_{\con{1}} + O(|x|^2)T+\\
   +O(|x|^2)Z_{\con{1}}Z_1+ O(|x|^3)Z_1T + O(|x|^4)Z_{\con{1}}^2 + O(|x|^5)Z_{\con{1}}T + O(|x|^6)T^2\\
   \cerchio{Z}_{\con{1}}\cerchio{Z}_1 = (1+ O(|x|^2))Z_{\con{1}}Z_1 + O(|x|)Z_1 + O(|x|)Z_{\con{1}} + O(|x|^2)T+\\
   + O(|x|^2)Z_1^2+ O(|x|^2)Z_{\con{1}}^2 + O(|x|^3)Z_1T + O(|x|^3)Z_{\con{1}}T + O(|x|^6)T^2\\
   \cerchio{Z}_1\cerchio{T} = (1+O(|x|^2))Z_1T + O(1)Z_1 + O(1)Z_{\con{1}} + O(|x|)T+\\
   + O(|x|)Z_1^2 + O(|x|)Z_{\con{1}}Z_1 + O(|x|^2)Z_{\con{1}}T + O(|x|^3)T^2 + O(|x|^3)Z_{\con{1}}^2\\
   \cerchio{T}^2 = (1+O(|x|^2))T^2+ O(1)Z_1 + O(1)Z_{\con{1}} + O(|x|)T + O(|x|)Z_1T +\\
   +O(|x|)Z_{\con{1}}T + O(|x|^2)Z_{\con{1}}Z_1 +O(|x|^2)Z_1^2 + O(|x|^2)Z_{\con{1}}^2
  \end{cases}
 \end{equation}
\end{lemma}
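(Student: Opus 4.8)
The plan is to read off the second-order relations \eqref{RelazioniInverseCoordinatePH2} by composing the first-order relations of Lemma \ref{LemmaRelazioniPH}, separating at each stage the genuinely second-order part from the lower-order operators produced by the Leibniz rule and by commutators. Writing the first line of \eqref{RelazioniInverseCoordinatePH} as $\cerchio{Z}_1 = a_1 Z_1 + a_2 Z_{\con{1}} + a_3 T$ and the third as $\cerchio{T} = b_1 Z_1 + b_2 Z_{\con{1}} + b_3 T$, with $a_1,b_3 = 1 + O(|x|^2)$, $a_2 = O(|x|^2)$, $a_3 = O(|x|^3)$ and $b_1,b_2 = O(|x|)$, the product rule gives for instance
$$\cerchio{Z}_1\cerchio{T} = \sum_{i,j}a_i\,(W_i b_j)\,W_j + \sum_{i,j}a_i b_j\,W_i W_j,$$
where $(W_1,W_2,W_3)=(Z_1,Z_{\con{1}},T)$. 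The first sum is a first-order operator, the second the principal part; the analogous expansions hold for $\cerchio{Z}_1^2$, $\cerchio{Z}_{\con{1}}\cerchio{Z}_1$ and $\cerchio{T}^2$. I would carry out the four expansions one at a time.

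The orders of the coefficients are then governed by a weighted count in which $Z_1,Z_{\con{1}}$ carry weight $-1$ and $T$ carries weight $-2$: a smooth coefficient that is $O(|x|^k)$ becomes $O(|x|^{k-1})$ after differentiation by $Z_1$ or $Z_{\con{1}}$ and $O(|x|^{k-2})$ after differentiation by $T$, while products of coefficients add orders. This immediately yields the principal parts — e.g. $a_1^2 = 1 + O(|x|^2)$ for $Z_1^2$ and $b_3^2 = 1 + O(|x|^2)$ for $T^2$ — and it fixes the orders of the Leibniz coefficients, such as the $O(1)$ coefficient of $Z_1$ in $\cerchio{Z}_1\cerchio{T}$, which comes from $a_1\,(Z_1 b_1)$ with $b_1 = O(|x|)$.

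The remaining step is to move the principal monomials $W_iW_j$ into the basis chosen in \eqref{RelazioniInverseCoordinatePH2}, which uses $Z_{\con{1}}Z_1$ (not $Z_1 Z_{\con{1}}$) and the products with $T$ on the right. For this I would use the commutation relations of the Tanaka-Webster frame, whose leading behaviour in normal coordinates follows from the structure equations and from the estimate \eqref{omega} on ${\omega_1}^1$: schematically $[Z_1,Z_{\con{1}}] = O(1)\,T + O(|x|)Z_1 + O(|x|)Z_{\con{1}}$ and $[Z_1,T],[Z_{\con{1}},T] = O(|x|)(Z_1 + Z_{\con{1}} + T)$. Each reordering then contributes an operator of lower order whose coefficient is again controlled by the weighted count; for example the term $a_1 b_2\,Z_1 Z_{\con{1}}$ in $\cerchio{Z}_1\cerchio{T}$ becomes $a_1 b_2\,Z_{\con{1}}Z_1 + O(|x|)\,T$, consistent with the stated orders.

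The main obstacle is the precise control of the terms in which $T$ differentiates a coefficient that is close to $1$. In the expansion of $\cerchio{T}^2$ the first-order $T$-coefficient contains $b_3\,(T b_3)$ with $b_3 = 1 + O(|x|^2)$; the crude weight count gives only $T b_3 = O(1)$, hence an apparent $O(1)\,T$, whereas \eqref{RelazioniInverseCoordinatePH2} asserts $O(|x|)\,T$. Closing this half-power gap requires the sharper information, available from the expansions of \cite{JL3}, that the leading $O(|x|^2)$ correction of the diagonal frame coefficient is $t$-independent to top order, so that $\partial_t$ of it gains an extra factor of $|x|$; the off-diagonal coefficients $b_1,b_2$ carry no such cancellation, which is exactly why the coefficients of $Z_1,Z_{\con{1}}$ remain only $O(1)$. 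Once this refinement is in place the verification is a finite, routine check of each entry, and I would spell out the $\cerchio{T}^2$ row in full and note that the other three rows follow by the identical procedure.
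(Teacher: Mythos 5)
Your proposal matches the paper's proof, which consists precisely of composing the first-order frame relations of Lemma \ref{LemmaRelazioniPH} via the Leibniz rule, reordering with the commutators, and tracking weighted orders (the paper's ``some computations''). The subtlety you flag in the $\cerchio{T}^2$ row is genuine and your resolution is the right one: because the integral curve of $T$ through the base point is a parabolic geodesic, the $\cerchio{\theta}$-coefficient of $\theta$ equals $1$ identically along the $t$-axis, so the $O(|x|^2)$ correction to your $b_3$ has no pure-$t$ Taylor terms and $Tb_3=O(|x|)$ as required.
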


\begin{proof}
 It follows from Lemma \ref{LemmaRelazioniPH} and some computations.
\end{proof}

\begin{lemma}\label{StimeDerivateCoordinateCR}
 For any function $f$ in CR normal coordinates around $\overline{x}$
 $$|f_{,1}- \cerchio{Z}_1f|\lesssim (|f_{,1}|+f_{,\con{1}}|)|x|^2 + |f_{,0}||x|^3,$$
 $$|f_{,0}- \cerchio{T}f|\lesssim (|f_{,1}|+f_{,\con{1}}|)|x| + |f_{,0}||x|^2,$$
 $$|f_{,11}- \cerchio{Z}_1^2f|\lesssim (|f_{,1}|+|f_{,\con{1}}|)|x| + (|f_{,0}|+|f_{,11}|+|f_{,1\con{1}}|)|x|^2 + |f_{,01}||x|^3+$$
 $$+|f_{,\con{1}\con{1}}||x|^4 +|f_{,0\con{1}}||x|^5 + |f_{,00}||x|^6,$$
 $$|f_{,1\con{1}}- \cerchio{Z}_{\con{1}}\cerchio{Z}_1f|\lesssim (|f_{,1}|+f_{,\con{1}}|)|x| + (|f_{,0}|+|f_{,11}|+|f_{,1\con{1}}|+|f_{,\con{1}\con{1}}|)|x|^2+$$
 $$+ (|f_{,01}|+|f_{,0\con{1}}|)|x|^3 + |f_{,00}||x|^6,$$
 $$|f_{,01}- \cerchio{Z}_1\cerchio{T}f|\lesssim |f_{,1}|+|f_{,\con{1}}| + (|f_{,0}|+|f_{,11}|+|f_{,1\con{1}}|)|x| +(|f_{,01}| +|f_{,0\con{1}}|)|x|^2 +$$
 $$+ (|f_{,00}|+ |f_{,\con{1}\con{1}}|)|x|^3,$$
 $$|f_{,00}- \cerchio{T}^2f|\lesssim |f_{,1}|+|f_{,\con{1}}| + (|f_{,0}|+|f_{,01}|+|f_{,0\con{1}}|)|x| + (|f_{,11}|+|f_{,1\con{1}}| + |f_{,\con{1}\con{1}}| +|f_{,00}|)|x|^2.$$
\end{lemma}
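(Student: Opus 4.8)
The plan is to express every Tanaka--Webster covariant derivative of $f$ in terms of iterated directional derivatives along the frame $Z_1,Z_{\con{1}},T$, and then to feed these into the frame comparison formulas \eqref{RelazioniInverseCoordinatePH} and \eqref{RelazioniInverseCoordinatePH2}. The key preliminary is that for a scalar $f$ one has $f_{,\alpha}=Z_\alpha f$, while a second covariant derivative differs from the corresponding directional product by exactly one connection term,
\[
 f_{,\alpha\beta}=Z_\beta Z_\alpha f-{\omega_\alpha}^{\gamma}(Z_\beta)\,f_{,\gamma}.
\]
Since $n=1$ and $\ci{H}$ is a parallel line bundle, the only nonzero connection coefficients are ${\omega_1}^1$ and its conjugate, and since the Reeb field is parallel no connection term occurs when the differentiation index is $0$. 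Concretely $f_{,01}=Z_1Tf$, $f_{,0\con{1}}=Z_{\con{1}}Tf$, $f_{,00}=T^2f$, whereas $f_{,11}=Z_1^2f-{\omega_1}^1(Z_1)f_{,1}$, $f_{,1\con{1}}=Z_{\con{1}}Z_1f-{\omega_1}^1(Z_{\con{1}})f_{,1}$ and $f_{,\con{1}\con{1}}=Z_{\con{1}}^2f-\con{{\omega_1}^1}(Z_{\con{1}})f_{,\con{1}}$. By \eqref{omega} the value of ${\omega_1}^1$ on any frame vector is $O(|x|)$, so each directional product equals the matching covariant derivative up to an $O(|x|)$ multiple of a first-order covariant derivative.

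For the two first-order estimates I would simply substitute the inverse relations \eqref{RelazioniInverseCoordinatePH}. For example $\cerchio{Z}_1f=(1+O(|x|^2))f_{,1}+O(|x|^2)f_{,\con{1}}+O(|x|^3)f_{,0}$, so that $f_{,1}-\cerchio{Z}_1f=O(|x|^2)(f_{,1}+f_{,\con{1}})+O(|x|^3)f_{,0}$, which is the claimed bound; the estimate for $f_{,0}-\cerchio{T}f$ is obtained identically from the third line of \eqref{RelazioniInverseCoordinatePH}.

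The second-order estimates are the heart of the lemma and all follow the same mechanical recipe. For each one I would expand the Heisenberg operator through the relevant line of \eqref{RelazioniInverseCoordinatePH2}; for instance this writes $\cerchio{Z}_1^2f$ as $(1+O(|x|^2))Z_1^2f$ plus explicitly weighted copies of the remaining directional products $Z_1f,Z_{\con{1}}f,Tf,Z_{\con{1}}Z_1f,Z_1Tf,Z_{\con{1}}^2f,Z_{\con{1}}Tf,T^2f$. I then rewrite every directional product back as a covariant derivative using the formulas of the first paragraph. The leading term $(1+O(|x|^2))Z_1^2f$ yields $f_{,11}$ together with an $O(|x|^2)f_{,11}$ error and an $O(|x|)f_{,1}$ connection correction, so subtracting $f_{,11}$ cancels the main term and leaves exactly the stated sum of $O(|x|^k)$ multiples of the covariant derivatives. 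Applying the second, third and fourth lines of \eqref{RelazioniInverseCoordinatePH2} in the same way produces the bounds for $f_{,1\con{1}}-\cerchio{Z}_{\con{1}}\cerchio{Z}_1f$, $f_{,01}-\cerchio{Z}_1\cerchio{T}f$ and $f_{,00}-\cerchio{T}^2f$.

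The only genuine difficulty is the bookkeeping. One must verify, term by term, that converting a directional product carrying a factor $O(|x|^k)$ into its covariant form never creates an error of lower order than claimed: the connection corrections contribute $O(|x|^{k+1})$ multiples of first-order derivatives, and the off-diagonal entries in \eqref{RelazioniInverseCoordinatePH2} already carry enough powers of $|x|$ that every resulting contribution lands inside the right-hand sides stated in the lemma. Because the connection coefficients are $O(|x|)$ rather than $O(1)$, no correction is ever promoted to a dangerous order, so the verification is tedious but entirely routine.
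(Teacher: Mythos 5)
Your argument is correct and is essentially the paper's own proof: the paper likewise derives the first two estimates from \eqref{RelazioniInverseCoordinatePH} together with \eqref{omega}, and the second-order ones from \eqref{RelazioniInverseCoordinatePH2} together with \eqref{omega}, the connection form bound \eqref{omega} playing exactly the role you assign it of converting directional products into covariant derivatives at the cost of $O(|x|)$ times first-order terms. Your bookkeeping (e.g.\ that $(1+O(|x|^2))Z_1^2f$ contributes $O(|x|)f_{,1}+O(|x|^2)f_{,11}$ beyond $f_{,11}$) reproduces the stated right-hand sides term by term, so nothing is missing.
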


\begin{proof}
 The first two estimates follow from formulas \eqref{RelazioniInverseCoordinatePH} and \eqref{omega}. The other ones from formulas \eqref{RelazioniInverseCoordinatePH2} and \eqref{omega}.
\end{proof}

\begin{lemma}\label{SublaplacianoCoordinate}
 In pseudohermitian normal coordinates around a point $x$
 $$\Delta_b = \cerchio{\Delta}_b +O(|x|)\cerchio{Z}_1+O(|x|)\cerchio{Z}_{\con{1}} +O(|x|^2)\cerchio{T} + O(|x|^2)\cerchio{Z}_{\con{1}}\cerchio{Z}_1 +O(|x|^2)\cerchio{Z}_1\cerchio{Z}_1 +$$
 $$+O(|x|^2)\cerchio{Z}_{\con{1}}\cerchio{Z}_{\con{1}}+ O(|x|^3)\cerchio{T}\cerchio{Z}_1+ O(|x|^3)\cerchio{T}\cerchio{Z}_{\con{1}} +O(|x|^6)\cerchio{T}^2$$
\end{lemma}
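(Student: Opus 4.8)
The plan is to start from the intrinsic formula \eqref{FormulaSublaplaciano}, which in the three-dimensional case ($n=1$) expresses $\Delta_b f$ as the Levi-form contraction of the horizontal Hessian, built from the components $f_{,1\con{1}}$ and $f_{,\con{1}1}$ (with indices raised by $h^{1\con{1}}$). Since for a function the first covariant derivative coincides with the frame derivative, $f_{,1}=Z_1f$, the Hessian components are $f_{,1\con{1}} = Z_{\con{1}}Z_1 f - {\omega_1}^{1}(Z_{\con{1}})\,Z_1 f$ together with its conjugate. Thus $\Delta_b$ is, up to the connection correction and the Levi normalization, the symmetric combination of the second-order frame operators $Z_{\con{1}}Z_1$ and $Z_1 Z_{\con{1}}$, and the whole task reduces to expanding these two operators, and the first-order connection term, in the circle frame coming from the normal coordinates.

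For the second-order part I would invert the relations \eqref{RelazioniInverseCoordinatePH2}. These express the circle operators $\cerchio{Z}_1^2$, $\cerchio{Z}_{\con{1}}\cerchio{Z}_1$, $\cerchio{Z}_1\cerchio{T}$, $\cerchio{T}^2$ in terms of the $Z$-operators through a near-identity transformation that is triangular with respect to the Heisenberg grading, hence invertible by the same Neumann-series argument used in Lemma \ref{LemmaRelazioniPH}, with the inverse carrying the same orders of vanishing. Reading off the rows for $\cerchio{Z}_{\con{1}}\cerchio{Z}_1$ and $\cerchio{Z}_1^2$ one obtains
$$Z_{\con{1}}Z_1 = (1+O(|x|^2))\cerchio{Z}_{\con{1}}\cerchio{Z}_1 + O(|x|)\cerchio{Z}_1 + O(|x|)\cerchio{Z}_{\con{1}} + O(|x|^2)\cerchio{T} + O(|x|^2)\cerchio{Z}_1^2 + O(|x|^2)\cerchio{Z}_{\con{1}}^2 + O(|x|^3)\cerchio{T}\cerchio{Z}_1 + O(|x|^3)\cerchio{T}\cerchio{Z}_{\con{1}} + O(|x|^6)\cerchio{T}^2,$$
and the analogous expansion for $Z_1 Z_{\con{1}}$. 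The connection term ${\omega_1}^{1}(Z_{\con{1}})Z_1 f$ is controlled by \eqref{omega}, which gives ${\omega_1}^{1}=O(|x|)$, together with the first-order relations \eqref{RelazioniInverseCoordinatePH}, so it contributes only $O(|x|)\cerchio{Z}_1 + O(|x|)\cerchio{Z}_{\con{1}} + O(|x|^2)\cerchio{T}$.

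Summing the two second-order contributions, the leading terms combine to $\cerchio{Z}_{\con{1}}\cerchio{Z}_1 + \cerchio{Z}_1\cerchio{Z}_{\con{1}} = \cerchio{\Delta}_b$, and collecting the remaining terms by operator type yields precisely the asserted orders. This final bookkeeping is essentially the same computation that underlies Lemma \ref{StimeDerivateCoordinateCR}, now kept as an operator identity rather than as a pointwise estimate.

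The step requiring the most care is checking that no term of lower order than claimed survives: the coefficients of the pure first-order operators $\cerchio{Z}_1,\cerchio{Z}_{\con{1}}$ must be $O(|x|)$ and that of $\cerchio{T}$ must be $O(|x|^2)$. This holds because $\Delta_b$ carries no Reeb derivative at top order, so its expansion draws only on the $\cerchio{Z}_{\con{1}}\cerchio{Z}_1$ and $\cerchio{Z}_1^2$ rows of \eqref{RelazioniInverseCoordinatePH2}, whose first-order coefficients already vanish to the required order, while the $\cerchio{Z}_1\cerchio{T}$ and $\cerchio{T}^2$ rows (which carry $O(1)$ first-order coefficients) never enter. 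One must likewise verify that the Levi-form normalization is the Heisenberg one at the origin, so that the leading term is exactly $\cerchio{\Delta}_b$ and not a nonzero multiple of it; this is the only genuinely delicate point, the rest being routine order-tracking.
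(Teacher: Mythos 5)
Your proposal is correct and follows essentially the same route as the paper: write $\Delta_b$ in the frame $Z_1,Z_{\con{1}}$ with the connection correction controlled by \eqref{omega}, convert to the circle frame using the normal-coordinate expansions, and collect orders. The only real difference is that the paper composes the forward first-order relations \eqref{RelazioniCoordinatePH} directly rather than inverting the second-order system \eqref{RelazioniInverseCoordinatePH2}, which avoids the Neumann-series step and the (harmless, but not literally absent) coupling to the $\cerchio{Z}_1\cerchio{T}$ and $\cerchio{T}^2$ rows that you dismiss a bit too quickly.
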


\begin{proof}
 Using formulas \eqref{RelazioniCoordinatePH} we get
 $$\Delta_b  = Z_1Z_{\con{1}}+ Z_{\con{1}}Z_{1} - {\omega_{1}}^{1}(Z_{\con{1}})Z_{1} - {\omega_{\con{1}}}^{\con{1}}(Z_1)Z_{\con{1}}=$$
 $$= \left((1+O(|x|^2))\cerchio{Z}_1 +O(|x|^2)\cerchio{Z}_{\con{1}} + O(|x|^3)\cerchio{T}\right)\cdot$$
 $$\cdot\left(O(|x|^2)\cerchio{Z}_1 +(1+O(|x|^2))\cerchio{Z}_{\con{1}}+ O(|x|^3)\cerchio{T}\right) +$$
 $$- \left(O(|x|)\cerchio{\theta}^1 + O(|x|)_{\con{1}}\cerchio{\theta}^{\con{1}} + O(|x|)\cerchio{\theta}\right)\cdot$$
 $$\cdot\left(O(|x|^2)\cerchio{Z}_1 +(1+O(|x|^2))\cerchio{Z}_{\con{1}}+ O(|x|^3)\cerchio{T}\right)\cdot$$
 $$\cdot \left((1+O(|x|^2))\cerchio{Z}_1 +O(|x|^2)\cerchio{Z}_{\con{1}}+ O(|x|^3)\cerchio{T}\right) +c.c.=$$
 $$= \cerchio{\Delta}_b  +O(|x|)\cerchio{Z}_1+O(|x|)\cerchio{Z}_{\con{1}} +O(|x|^2)\cerchio{T} + O(|x|^2)\cerchio{Z}_{\con{1}}\cerchio{Z}_1 +O(|x|^2)\cerchio{Z}_1\cerchio{Z}_1 +$$
 $$+O(|x|^2)\cerchio{Z}_{\con{1}}\cerchio{Z}_{\con{1}}+ O(|x|^3)\cerchio{T}\cerchio{Z}_1+ O(|x|^3)\cerchio{T}\cerchio{Z}_{\con{1}} +O(|x|^6)\cerchio{T}^2$$
 where ``$+c.c.$'' means that the complex conjugate of the previous terms is to be added.
\end{proof}

\begin{lemma}\label{DisuguaglianzaSublaplaciano}
 For any function $f$ in CR normal coordinates around $\overline{x}$
 $$|\Delta_bf - \cerchio{\Delta}_bf| \lesssim |x||f_{,1}|+ |x||f_{,\con{1}}| + |x|^2|f_{,0}| + |x|^2|f_{,1\con{1}}| +|x|^2|f_{,11}| +$$
 $$+|x|^2|f_{,\con{1}\con{1}}|+ |x|^3|f_{01}| + |x|^3|f_{,0\con{1}}| +|x|^6|f_{,00}|$$
\end{lemma}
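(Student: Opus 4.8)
The plan is to combine the expansion of the sublaplacian from Lemma \ref{SublaplacianoCoordinate} with the pointwise derivative estimates of Lemma \ref{StimeDerivateCoordinateCR}. First I would use Lemma \ref{SublaplacianoCoordinate} to write
$$\Delta_b f - \cerchio{\Delta}_b f = O(|x|)\cerchio{Z}_1 f + O(|x|)\cerchio{Z}_{\con{1}} f + O(|x|^2)\cerchio{T} f + O(|x|^2)\cerchio{Z}_{\con{1}}\cerchio{Z}_1 f + O(|x|^2)\cerchio{Z}_1^2 f + O(|x|^2)\cerchio{Z}_{\con{1}}^2 f + O(|x|^3)\cerchio{T}\cerchio{Z}_1 f + O(|x|^3)\cerchio{T}\cerchio{Z}_{\con{1}} f + O(|x|^6)\cerchio{T}^2 f,$$
so that it suffices to bound each term $|x|^k|\cerchio{D}f|$ on the right, where $\cerchio{D}$ ranges over the listed circle-differential operators, in terms of the Tanaka--Webster covariant derivatives of $f$ with respect to the fixed pseudohermitian structure.

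The key point is that Lemma \ref{StimeDerivateCoordinateCR} expresses each circle-derivative $\cerchio{D}f$ as the corresponding covariant derivative of $f$ plus an error which is a sum of covariant derivatives of $f$ multiplied by positive powers of $|x|$. Substituting these, every summand above splits into a leading term and a collection of error terms. The leading terms turn out to be exactly $|x||f_{,1}|$, $|x||f_{,\con{1}}|$, $|x|^2|f_{,0}|$, $|x|^2|f_{,1\con{1}}|$, $|x|^2|f_{,11}|$, $|x|^2|f_{,\con{1}\con{1}}|$, $|x|^3|f_{,01}|$, $|x|^3|f_{,0\con{1}}|$ and $|x|^6|f_{,00}|$, matching the nine terms on the right-hand side of the statement one by one. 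Here I would use that on $\H^1$ the fields $\cerchio{T}$ and $\cerchio{Z}_1$ commute, so that the estimate for $\cerchio{Z}_1\cerchio{T}f$ provided by Lemma \ref{StimeDerivateCoordinateCR} applies directly to the term $\cerchio{T}\cerchio{Z}_1 f$ (and its conjugate).

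The main, and essentially only, obstacle is the bookkeeping: I must verify that every error term produced is of strictly higher order in $|x|$ than, hence dominated for $|x|$ small by, one of the nine leading terms already present in the thesis. This requires tracking the specific powers of $|x|$ in Lemmas \ref{SublaplacianoCoordinate} and \ref{StimeDerivateCoordinateCR} carefully. For instance, the term $O(|x|^2)\cerchio{T}f$ yields a cross contribution of size $|x|^3(|f_{,1}|+|f_{,\con{1}}|)$, which is absorbed into $|x|(|f_{,1}|+|f_{,\con{1}}|)$; and the worst-placed contributions, namely those that appear with an $O(1)$ coefficient in Lemma \ref{StimeDerivateCoordinateCR} (such as the $|f_{,1}|+|f_{,\con{1}}|$ showing up in the estimates for $\cerchio{Z}_1\cerchio{T}f$ and $\cerchio{T}^2 f$), are still carried by the prefactors $O(|x|^3)$ and $O(|x|^6)$ coming from Lemma \ref{SublaplacianoCoordinate} and therefore remain of higher order. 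Once this verification is performed for each of the nine terms, collecting the leading contributions gives precisely the claimed inequality.
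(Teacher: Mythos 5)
Your proposal is correct and follows exactly the paper's route: the paper likewise bounds $|\Delta_bf-\cerchio{\Delta}_bf|$ by the nine circle-derivative terms via Lemma \ref{SublaplacianoCoordinate} and then states that the thesis follows from Lemma \ref{StimeDerivateCoordinateCR} ``and some computations,'' which are precisely the bookkeeping and absorption of higher-order cross terms you carry out. Your explicit remarks on commuting $\cerchio{T}$ and $\cerchio{Z}_1$ and on the $O(1)$ error terms being saved by the $O(|x|^3)$ and $O(|x|^6)$ prefactors supply detail the paper leaves implicit.
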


\begin{proof}
 By Lemma \ref{SublaplacianoCoordinate}
 $$|\Delta_bf - \cerchio{\Delta}_bf| \lesssim |x||\cerchio{Z}_1f|+ |x||\cerchio{Z}_{\con{1}}f| + |x|^2|\cerchio{T}f| + |x|^2|\cerchio{Z}_{\con{1}}\cerchio{Z}_1f| +|x|^2|\cerchio{Z}_1\cerchio{Z}_1f| +$$
 $$+|x|^2|\cerchio{Z}_{\con{1}}\cerchio{Z}_{\con{1}}f|+ |x|^3|\cerchio{T}\cerchio{Z}_1f| + |x|^3|\cerchio{T}\cerchio{Z}_{\con{1}}f| +|x|^6|\cerchio{T}^2f|$$
 and the thesis follows from Lemma \ref{StimeDerivateCoordinateCR} and some computations.
\end{proof}

 \textsc{Claudio Afeltra, Department of Mathematics, University of Trento, Via Sommarive 14, 38123 Povo (Trento), Italy}
 
 \textit{Email address}:  \texttt{claudio.afeltra@unitn.it}
 
\end{document}